\documentclass{amsart}
\usepackage{graphicx} 
\usepackage{hyperref}
\usepackage{amsmath, amsthm, amssymb, amscd}
\usepackage{mathtools}
\usepackage{mathabx}
\usepackage{enumerate}
\usepackage{hyperref}
\usepackage{verbatim}
\usepackage{subcaption}
\usepackage{cite}
\usepackage{color}
\usepackage{esint}
\usepackage{tikz-cd}
\usetikzlibrary{shapes.geometric}
\usepackage[shortlabels]{enumitem}
 
\usepackage{bm}

\makeatletter
\newcommand{\labitem}[2]{%
\def\@itemlabel{\textbf{#1}}
\item
\def\@currentlabel{#1}\label{#2}}
\makeatother

\title{On the Resistance Conjecture}
\author{Sylvester Eriksson-Bique}
\email{sylvester.d.eriksson-bique@jyu.fi}
\address{Department of Math. and Stat.
P.O. Box 35 \\
FI-40014 University of Jyväskylä}
\thanks{We thank Mathav Murugan and Riku Anttila for extensive discussions on the problem and for asking great questions, which lead to identifying the Whitney blending technique. Both of them had observed earlier the importance of writing the cutoff Sobolev as a Poincar\'e inequality and that the cutoff Sobolev inequality would follow from the log-Caccioppoli inequality for functions $f$ with $f=0$ outside $2B$. Anttila was the first to publish an approach to cutoff Sobolev based on these ideas and proved partial results that came quite close \cite{rikuanttila}.  After the first version of this draft, Murugan also shared us an unpublished draft which he had written and shared with colleagues in 2018, where he proved a version of cutoff Sobolev for $f$ vanishing outside $2B$. Murugan also had the early idea of decomposing a function $f$ into two parts. The missing piece that we add to the resolution here is the Whitney blending method, along with a few smaller but crucial technical observations. We thank Pekka Koskela for many lectures years ago teaching us a very similar method, and Juha Kinnunen for advocating for developing a theory applicable to all exponents $p$. We also thank Ryosuke Shimizu for a careful reading of the draft of the paper and for his comments that helped correct several inaccuracies. Peking University hosted the author in Fall 2025, and we thank it for the opportunity to give a sequence of lectures there, during which some of the ideas of the present paper matured. The author was partially supported by the Research Council of Finland via the project \emph{GeoQuantAM: Geometric and Quantitative Analysis on Metric spaces}, grant no. 354241.}
\subjclass[2020]{Primary: 31C25, 31E05, 30L99; Secondary: 49Q15, 26B05, 60J60, 60G30, 46E35, 49J52, 53C23, 31C15, 28A12}
\keywords{Resistance conjecture, Harnack inequality, Poincar\'e inequality, Cutoff Sobolev inequality, Capacity bounds, $p$-energy measure, energy measure, Whitney covering, extensions of Sobolev functions, martingale dimension, index of Dirichlet form.}
\date{\today}

\usepackage[shortlabels]{enumitem}
 
\newtheorem{theorem}[equation]{Theorem}
\newtheorem{lemma}[equation]{Lemma}

\newtheorem{proposition}[equation]{Proposition}

\numberwithin{equation}{section}

\theoremstyle{definition}
\newtheorem{definition}[equation]{Definition}

\theoremstyle{remark}
\newtheorem{remark}[equation]{Remark}

\newtheorem{question}[equation]{Question}

    \newcommand*{\N}{\mathbb{N}}
    \newcommand*{\Z}{\mathbb{Z}}
    \newcommand*{\R}{\mathbb{R}}
        \DeclareMathOperator{\diam}{diam}

        \DeclareMathOperator{\rad}{rad}

        \DeclareMathOperator{\loc}{loc}

        \DeclareMathOperator{\Lip}{Lip}
        \DeclareMathOperator{\PI}{PI}
        \DeclareMathOperator{\cCap}{\rm Cap}
        
        \DeclareMathOperator{\cE}{\mathcal{E}}

        \DeclareMathOperator{\cF}{\mathcal{F}}
        
        \DeclareMathOperator{\cH}{\mathcal{H}}
        
        \DeclareMathOperator{\cB}{\mathcal{B}}

        \DeclareMathOperator{\LIP}{LIP}

        \DeclareMathOperator{\spt}{spt}

        \DeclarePairedDelimiter\norm{\lVert}{\rVert}
\begin{document}

\maketitle

\begin{abstract}
We give an affirmative answer to the resistance conjecture on characterization of parabolic Harnack inequalities in terms of volume doubling, upper capacity bounds and a Poincar\'e inequalities.  The key step is to show that these three assumptions imply the so called cutoff Sobolev inequality, an important inequality in the study of anomalous diffusions, Dirichlet forms and re-scaled energies in fractals. This implication is shown in the general setting of $p$-Dirichlet Spaces introduced by the author and Murugan, and thus a unified treatment becomes possible to proving Harnack inequalities and stability phenomena in both analysis on metric spaces and fractals and for graphs and manifolds for all exponents $p\in (1,\infty)$. As an application, we also show that a Dirichlet space satisfying volume doubling, Poincar\'e and upper capacity bounds has finite martingale dimension and admits a type of differential structure similar to the work of Cheeger. In the course of the proof, we establish methods of extension and characterizations of Sobolev functions by Poincar\'e-inequalities, and extend the methods of Jones and Koskela to the general setting of $p$-Dirichlet spaces.
\end{abstract}

%{\color{blue} RA Comment to abstract: In my opinion, Kumagai should not be mentioned in resistance conjecture. He posed an open problem to simplify heat kernel estimates. Barlow is also a bit questionable I think. But the question they basically posed was definitely resistance conjecture so mentioning them is not wrong, but a bit questionable to relate them to resistance conjecture.\\
%MM: I agree with Riku. Perhaps the precise statement of the should be attributed Grigor'yan, Hu and Lau. Barlow and Kumagai were vague and it is not clear whether or not they meant the resistance conjecture.}

\section{Introduction}
\subsection{Main results}
The resistance conjecture was posed by Grigor'yan, Hu and Lau in \cite[Conjecture 4.16]{GHL},  \cite[p. 1494]{GHL2}, \cite[Conjecture 4.28]{rikuanttila}. It asks if volume doubling, resistance bounds and a Poincar\'e inequality suffice to prove a parabolic Harnack inequality for general metric measure Dirichlet spaces.  Similar, and less specific, general questions seeking to find minimal conditions that characterize parabolic Harnack inequalities were posed in \cite[Open problem II]{kumagaiICM} and \cite[Remark 3.17]{AnalysisBarlow}. In \cite{BarlowBassStability, barlow2006stability}, it was shown that Poincar\'e inequalities, doubling and the cutoff Sobolev inequality were equivalent to a parabolic Harnack inequality. The cutoff Sobolev inequality was quite mysterious and numerous authors commented on the difficulty of verifying it for explicit spaces. Further, it was unclear what its geometric content was. Here, we remove the cutoff Sobolev inequality from the characterization by showing that it is implied by volume doubling, Poincar\'e and upper capacity bounds thus leading to a conceptually simple generalizations of the seminal result of Grigor'yan and Saloff-Coste in \cite{grigor,saloff-coste}, simplifying the characterization in \cite{barlow2006stability}.
\begin{theorem}\label{thm:mainthmhke}
 Let $(X,d,\mu,\cE,
 \cF)$ be a metric measure Dirichlet space, then the following are equivalent:
 \begin{enumerate}
     \item $(X,d,\mu,\cE,
 \cF)$ satisfies the parabolic Harnack inequality ${\rm PHI}(\beta)$.
     \item $(X,d,\mu,\cE,
 \cF)$ satisfies  heat kernel estimates ${\rm HKE}(\beta)$ and volume doubling of $\mu$.
     \item $(X,d,\mu,\cE,
 \cF)$ satisfies volume doubling of $\mu$, elliptic Harnack ineqality and a two-sided capacity bound.
     \item $(X,d,\mu,\cE,
 \cF)$ satisfies volume doubling of $\mu$, Poincar\'e inequality $\PI(\beta)$ and capacity upper bound $\cCap_{2}(\beta)$.
 \end{enumerate}
\end{theorem}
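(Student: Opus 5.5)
The plan is to reduce everything to known characterizations and supply only the one new implication. By Barlow--Bass--Kumagai and Barlow--Grigor'yan--Kumagai, together with Grigor'yan--Hu--Lau, the following are already known to be equivalent:
\begin{itemize}
\item ${\rm PHI}(\beta)$;
\item ${\rm HKE}(\beta)$ together with volume doubling;
\item volume doubling, $\PI(\beta)$ and the cutoff Sobolev inequality ${\rm CS}(\beta)$;
\item volume doubling, the elliptic Harnack inequality and two-sided capacity bounds.
\end{itemize}
These results are stated for metric measure Dirichlet spaces in the generality of the theorem. This settles (1)$\Leftrightarrow$(2)$\Leftrightarrow$(3) by citation. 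For (1)$\Rightarrow$(4) I would also argue from the literature. ${\rm PHI}(\beta)$ gives $\PI(\beta)$ and volume doubling directly. Testing the energy of a cutoff function built from the exit time or resolvent of a ball gives the upper capacity bound $\cCap_2(\beta)$; alternatively it is the easy half of ${\rm CS}(\beta)$, obtained by taking $f\equiv 1$ on $2B$. So the whole content lies in (4)$\Rightarrow$(1). By the cited characterization this reduces to the main new statement: volume doubling, $\PI(\beta)$ and $\cCap_2(\beta)$ imply ${\rm CS}(\beta)$. I would prove that implication for general $p$-Dirichlet spaces and specialize to $p=2$.

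For the implication to ${\rm CS}(\beta)$, fix a ball $B=B(x,r)$. Rewrite cutoff Sobolev as the existence of one cutoff $\phi$ for $B\subset 2B$ with
\[
\int_{2B}|f|^2\,d\Gamma(\phi)\le C\,r^{-\beta}\int_{2B}f^2\,d\mu + \tfrac18\,\Gamma(f)(2B).
\]
This form is a Poincar\'e-type weighted inequality. Choose $\phi$ to be an equilibrium potential (a $\cCap_2$-minimizer) for a condenser slightly inside $2B$. For such a $\phi$, the energy measure $\Gamma(\phi)$ satisfies a log-Caccioppoli type estimate, obtained by testing the harmonicity equation against $f^2 \log$-truncations. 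Combined with $\PI(\beta)$ and $\cCap_2(\beta)$, this estimate gives the inequality for $f$ vanishing outside $2B$.

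Next, split a general $f$ into a part supported in $2B$ and a remainder. To control the remainder, use a Whitney decomposition of the annulus $2B\setminus B$ and replace $f$ by a blended function. This function takes local averages of $f$ on Whitney balls, glued with a partition of unity built from capacitary cutoffs (via $\cCap_2(\beta)$ at each Whitney scale). The key requirement is that the energy of the blended function is bounded by $\Gamma(f)(2B)$ plus $r^{-\beta}\|f\|_2^2$. I would prove this using $\PI(\beta)$ on chains of neighbouring Whitney balls, in the style of the Jones--Koskela extension argument.

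I expect the main obstacle to be this Whitney blending estimate. In a general Dirichlet space there are no Lipschitz partitions of unity with $|\nabla\psi|\lesssim r^{-1}$. Capacitary cutoffs only give energy bounds, not pointwise gradient bounds, and the energy measure of a product or sum must be controlled through the Leibniz rule and the chain rule for $\Gamma$. Controlling the overlapping terms $\int (f-f_{B_i})^2\,d\Gamma(\psi_i)$ seems to need cutoff-Sobolev itself at the smaller scale. My first attempt would be an iteration or bootstrap: prove the inequality with a constant $A(r)$ and use the scale-invariant recursion from the Whitney structure to show that $A$ is bounded.
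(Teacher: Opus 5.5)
Your reduction is the same as the paper's. \cite[Theorem 4.5]{MN} gives the equivalence of (1)--(3) with volume doubling, $\PI(\beta)$ and cutoff Sobolev, and testing cutoff Sobolev with $f\equiv 1$ gives $\cCap_2(\beta)$. So everything rests on Theorem \ref{thm:mainthm}. Your outline of that implication also matches the paper: a harmonic cutoff, a log-Caccioppoli bound for functions vanishing outside $2B$, and Whitney averages glued by a capacitary partition of unity. Your $p=2$ test-function argument is a fine substitute for Lemma \ref{lem:LogCaccioppoli}.

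\textbf{The blending step.} The proposal stops exactly where the content is. You leave the Whitney blending estimate open and diagnose an obstacle that would make it circular. You then propose a bootstrap in a constant $A(r)$ that is not known to be finite, with no recursion exhibited. The obstacle only arises if you glue $f$ itself, as in $\psi f+(1-\psi)g$. If the blend uses only the constants $f_{B'}$, then near a Whitney ball $B_*$ you have $h=f_{B_*}+\sum_{B'}(f_{B'}-f_{B_*})\psi_{B'}$. By the triangle inequality and homogeneity, $\Gamma_2\langle h\rangle\lesssim\sum_{B'}|f_{B'}-f_{B_*}|^2\,\Gamma_2\langle\psi_{B'}\rangle$. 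The numbers $|f_{B'}-f_{B_*}|^2$ are controlled by $\PI(\beta)$ on $2\Lambda^2B'$, and $\Gamma_2\langle\psi_{B'}\rangle(X)\lesssim\mu(B')/\rad(B')^\beta$ by $\cCap_2(\beta)$. No integral of $f$ against $\Gamma_2\langle\psi_{B'}\rangle$ ever appears, so no cutoff Sobolev at smaller scales is needed.

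\textbf{The interface with $f$.} Where $h$ meets $f$ along $\partial B_0$, infinitely many Whitney balls accumulate. You also give no argument there that $h\in\cF$ at all. The paper does not compute $\Gamma_2\langle h\rangle$ there. Instead it proves, for every small ball $B$, an oscillation bound $\fint_B|h-m_B|^2\,d\mu\lesssim\frac{\rad(B)^\beta}{\mu(B)}\nu(\mathbf{\Lambda}B)$, with an explicit finite measure $\nu$ built from $\Gamma_2\langle f\rangle$ and the $\Gamma_2\langle\psi_{B'}\rangle$. The cases are as follows.
\begin{itemize}
\item If $B$ meets a Whitney ball no larger than itself, then all Whitney balls meeting $B$ are $\lesssim\rad(B)$ and lie in a fixed multiple of $B$. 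One takes $m_B=f_B$ and applies $\PI(\beta)$ to $f$.
\item If $B$ is smaller than every Whitney ball it meets, then $B\subset\Omega$ and the constant-coefficient computation above applies. When some of those coefficients are $0$, one uses an $L^2$ term instead.
\end{itemize}
Lemma \ref{lem:PIcharSob} then gives $h\in\cF$ with $\Gamma_2\langle h\rangle\lesssim\nu$. Its proof uses discrete convolutions $\sum_B h_B\phi_B$, again with constant coefficients, and lower semicontinuity.

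\textbf{The splitting.} Your split into a part supported in $2B$ plus a remainder does not say how the remainder is controlled on the outer part of the annulus, which is where $\Gamma_2\langle\phi\rangle$ lives. The log-Caccioppoli bound for $u$ superharmonic in $\Omega$ only handles functions vanishing off $\Omega$. Moving the condenser inside $2B$ does not help, because its potential is not superharmonic across its outer boundary, and the piece that equals $f$ on the support of $\Gamma_2\langle\phi\rangle$ cannot also vanish there. The paper blends twice:
\begin{itemize}
\item $h_2=f$ on $\tfrac32B$ and $h_2=0$ off $2B$, treated with $\psi_B$ superharmonic in $2B$;
\item $h_1=0$ on $B$ and $h_1=f$ off $\tfrac32B$, treated with $1-\psi_B$ superharmonic in $X\setminus B$, using $\Gamma_2\langle 1-\psi_B\rangle=\Gamma_2\langle\psi_B\rangle$.
\end{itemize}
On $2B$ one then has $|f|^2\le|h_1|^2+|h_2|^2$, and the two bounds combine.
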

Notice that here, and below, we avoid any assumptions on the metric being geodesic. For definitions of parabolic Harnack inequalities, two-sided capacity bounds (\cite[Definition 4.2]{MN}), Heat kernel estimates and metric measure Dirichlet spaces, see e.g. \cite{GHL, MN, erikssonmurugan} - we will not use these directly in our proofs and thus do not discuss them in more detail here; see \cite{Grigoryan,kumagaiICM,GHL,strichartz} for clear surveys and \cite{MN,rikuanttila} for well-written overviews on the literature and background.  This result is a combination of a fairly large array of papers, and we recommend to see \cite[Theorem 4.5]{MN} for a more detailed overview of the proof of the known parts. We briefly recall the history. In $\R^n$, elliptic and parabolic Harnack inequalities were established by Moser \cite{mosera,moserb}. This was extended to manifolds with non-negative Ricci curvature in \cite{CY, LY, Yau}. If $\beta=2$, then $\cCap_{2}(\beta)$ is automatically satisfied and the result is known from earlier works, in the manifold case \cite{grigor,saloff-coste,grigoryansaloffcoste}. The present version, with the addition of the cutoff Sobolev inequality was largely given in \cite{barlow2006stability}, while the case for graphs was studied in \cite{BarlowBassStability,GT}. 

Proving that the cutoff Sobolev inequality is implied by volume doubling and Poincar\'e has also other applications. Indeed, the cutoff Sobolev inequality has been used as a condition for a variety of results: e.g. on singularity phenomena \cite{yangenergy,MN},  reflected processes\cite{muruganreflected,anttilareflected} and giving bounds for martingale dimension \cite{muruganmartingale}. We note one such application here.
\begin{theorem}\label{thm:diffstruct}
    If a metric measure Dirichlet space satisfies volume doubling, $\cCap_p(\beta)$ and a Poincar\'e inequality $\PI_p(\beta)$, then its martingale dimension is finite and it admits a measurable Riemannian structure in the sense of \cite[Theorems 3.1 and 3.4]{hinodiff}.
\end{theorem}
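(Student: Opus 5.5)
The plan is to reduce Theorem~\ref{thm:diffstruct} to the known results that derive finite martingale dimension and a measurable Riemannian structure from the cutoff Sobolev inequality, which is the key implication of this paper. There are three steps.

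\textbf{Step 1: cutoff Sobolev.} Volume doubling, $\cCap_p(\beta)$ and $\PI_p(\beta)$ together imply the cutoff Sobolev inequality ${\rm CS}_p(\beta)$. This is the main implication established in the paper, stated in the general framework of $p$-Dirichlet spaces and specialized to $p=2$ for Dirichlet forms. I take it as given here.

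\textbf{Step 2: finite martingale dimension.} With ${\rm CS}(\beta)$, volume doubling and $\PI(\beta)$ in hand, I would invoke Murugan's result \cite{muruganmartingale}, which bounds the martingale dimension (the pointwise index of the Dirichlet form) under exactly these hypotheses. The mechanism behind that result is as follows.
\begin{enumerate}[(a)]
\item Fix a ball $B$. Cutoff Sobolev provides cutoff functions $\phi$ whose energy measures satisfy $d\Gamma(\phi)\lesssim r^{-\beta}\,d\mu$ in the relevant averaged sense.
\item Poincar\'e then allows any $f\in\cF$ to be approximated in energy on $B$ by a combination of a bounded number $N$ of such "coordinate" functions. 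The number $N$ depends only on the doubling constant, via a packing argument with nets of points at scale $r$.
\item A Lebesgue differentiation argument with respect to a minimal energy-dominant measure shows that the pointwise rank of $\bigl(\frac{d\Gamma(f_i,f_j)}{d\nu}\bigr)_{ij}$ is at most $N$ almost everywhere. This gives the index bound.
\end{enumerate}

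\textbf{Step 3: measurable Riemannian structure.} By \cite[Theorems 3.1 and 3.4]{hinodiff}, finite index (martingale dimension) of a strongly local regular Dirichlet form yields a measurable Riemannian structure: a measurable field of finite-dimensional inner product spaces together with a gradient operator representing $\Gamma$. I would only check that Hino's standing hypotheses hold in our setting. These are regularity and strong locality of the form, a Radon measure, and separability of $\cF$, all of which are part of the definition of a metric measure Dirichlet space.

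\textbf{Main obstacle.} The hardest step is Step 1, which is the content of the whole paper. Within this deduction, the delicate point is matching the exact hypotheses of \cite{muruganmartingale}. For instance, that work may assume a geodesic or chain condition, or may be phrased with ${\rm CS}$ plus $\PI$ rather than in a form that follows directly from $\cCap(\beta)$. If it does assume a geodesic condition, I would first try to replace it by the self-improvement and chaining available from volume doubling and $\PI(\beta)$, using the Whitney-type covering arguments developed later in the paper.
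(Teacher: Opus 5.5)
Your proposal has the same overall shape as the paper's proof: obtain the cutoff Sobolev inequality from Theorem \ref{thm:mainthm} (at $p=2$), feed it into an existing finiteness theorem for martingale dimension, and finish with \cite[Theorems 3.1 and 3.4]{hinodiff}. Where you differ is the middle step. The paper does not cite \cite{muruganmartingale} with CS as input. Instead it applies Theorem \ref{thm:mainthmhke} to obtain heat kernel estimates ${\rm HKE}(\beta)$, which uses CS converted to classical form by Lemma \ref{lem:equivalence} together with \cite[Theorem 4.5]{MN}. It then applies \cite[Theorem 5.7]{erikssonmurugan}.

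That detour resolves the "main obstacle" you leave open. Theorem \ref{thm:mainthmhke} needs no geodesic or chain assumption, so any finiteness result stated under heat kernel bounds (or ${\rm PHI}(\beta)$) applies directly. You therefore need no hypothesis matching against \cite{muruganmartingale}. Your proposed fallback, recovering a geodesic hypothesis from doubling and $\PI(\beta)$ via chaining and Whitney covers, is not a workable argument and should be replaced by routing through ${\rm HKE}(\beta)$. Also add Lemma \ref{lem:equivalence} explicitly if you feed CS into the literature, since Theorem \ref{thm:mainthm} produces the Poincar\'e form ${\rm CS}_p$ and not the classical form.

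Finally, do not rely on your sketch (a)--(c) of the mechanism. CS does not give $d\Gamma\langle\phi\rangle\lesssim r^{-\beta}\,d\mu$, because energy measures may be singular with respect to $\mu$; in the averaged sense, (a) is just $\cCap_2(\beta)$. Likewise, the Poincar\'e inequality does not give approximation in energy by boundedly many fixed coordinate functions; discrete convolutions only give uniformly bounded energy and $L^p$ convergence, which does not bound a pointwise rank. Since you are citing the result rather than reproving it, this does not break the logic.
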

The proof of this is obtained directly by applying Theorem \ref{thm:mainthmhke} to obtain heat kernel estimates and then applying \cite[Theorem 5.7]{erikssonmurugan} and \cite[Theorems 3.1 and 3.4]{hinodiff}. Martingale dimension (or index of a Dirichlet form) goes back to \cite{DV,MT} and for earlier work see  \cite{kusuoka, HinoM,HinoEnergy,Hinoupper}. It is a difficult concept to understand, very difficult to bound and has several suprising properties. From this perspective the previous theorem gives a particularly appealing simple condition for finiteness of martingale dimension and the existence of a type of differential structure that is very analogous to the seminal result of Cheeger \cite{Ch}. Cheeger considered analytic dimension and Lipschitz differentiability (a type of Rademacher theorem for Lipschitz functions), while here we consider martingale dimension and a type of differentiability coming from \cite[Theorem 3.4]{hinodiff}. Theorem \ref{thm:diffstruct} thus can be seen as a wide ranging generalization of \cite{Ch}. See \cite{erikssonmurugan,muruganmartingale} for further discussion and the definition of martingale dimension, and \cite{hinodiff} for a discussion on the associated differential structure. See also \cite{teriseb} for a closely related discussion on differential structures associated to upper gradients.
For further motivation on the cutoff Sobolev inequality see \cite{murugannonlocal}. 

Both of the above theorems follow from establishing the cutoff Sobolev inequality from the other conditions. This part can formulated for all exponents $p\in (1,\infty)$ and our methods become more transparent in this setting.  Thus, instead of the Dirichlet space setting, we will next focus on $p$-Dirichlet spaces introduced in \cite{erikssonmurugan}. This allows for a unified treatment of the cutoff Sobolev estimate in a large variety of energy constructions. An additional motivation comes from applications to other problems in the literature. Similar fractal $p$-energies have recently been extensively studied  \cite{murugan2023first,kigami,shimizu, ASEBShimizu,yangenergy,KS,strichartzp,pcfpenergy}, and by working in this setting, we establish the first step to extending Theorems \ref{thm:mainthmhke} and \ref{thm:diffstruct} for all exponents $p>1$. We note that constructions of discrete modulus, discrete capacities, $p$-variation measures for functions, Sobolev spaces and Besov spaces have also played a role in geometric group theory \cite{P89,BS,BourdonPajot,pallierpvar}, but that their connection to these self-similar $p$-energies remain unstudied. 

A $p$-Dirichlet space is a tuple $(X,d,\mu,\cE_p,\cF_p, \Gamma_p)$, where $(X,d,\mu)$ is a complete locally compact metric measure space equipped with a Radon measure $\mu$. A function space $\cF_p\subset L^p(X)$ is defined as the domain of a (closed) energy $\cE_p$, which is local and given by integration of an energy measure $\Gamma_p\langle f\rangle$ defined for each $f\in \cF_p$. Detailed axioms are in Section \ref{sec:pdirspace}. As shown in \cite{erikssonmurugan}, any metric measure Dirichlet space on a locally compact metric measure space defines a $2$-Dirichlet structure. Further, as observed there, such structures also arise naturally via upper gradients in analysis on Metric spaces and via rescaled energies, see e.g. \cite{erikssonmurugan, murugan2023first,shimizu, kigami,ASEBShimizu}.

The two key assumptions are the $(p,\beta)$-upper capacity bound $\cCap_p(\beta)$ and the $(p,\beta)$-Poincar\'e inequality $\PI_p(\beta)$. In these definitions, we set $f_B=\frac{1}{\mu(B)}\int_B f d\mu$.\footnote{We also use $\psi_B$ to denote a functions indexed by a ball $B$, but this convention is only used with Greek letters in this paper, for which we will not have a need to take an average.}

\begin{enumerate}
    \item  $\cCap_{p}(\beta)$: There exists a constant $C_{\rm cap}>0$ s.t. the following holds. For every $B(x,r)\subset X$ there exists a $\psi_B \in C (X)\cap \cF_p$ with $\psi_B|_B=1$ and $\psi_B|_{X\setminus 2B}=0$ and
    \[
    \Gamma_p\langle \psi_B \rangle(X)=\Gamma_p\langle \psi_B \rangle(2B)\leq C_{\rm cap}\frac{\mu(B(x,r))}{r^\beta } 
    \]
    \item $\PI_p(\beta)$: There exist constants $C_{\rm PI}, \Lambda>0$ s.t. the following holds. For every $B(x,r)\subset X$ and every $f\in \cF_p$
    \[
    \fint_B |f - f_B|^p d\mu \leq C_{\rm PI}\frac{r^\beta}{\mu(B(x,r))}\Gamma_p\langle f\rangle(\Lambda B). 
    \]
\end{enumerate}

%or simplicity assume also that $X$ is geodesic. All should work for locally linearly connected spaces: For all $x,y\in X$ there exists a connected set $E$ s.t. $\diam(E)\leq Cd(x,y)$. In fact, the Poincar\'e inequality must imply this with roughly the usual argument from Cheeger \cite{Ch} with chains of points.

As stated, the main task in proving Theorems \ref{thm:mainthmhke} and \ref{thm:diffstruct} is understanding the \emph{generalized capacity condition/cutoff Sobolev inequality}. The cutoff Sobolev inequality has so far appeared in several different forms in the literature, see below for some discussion on this. In \cite{barlowanders}, see also \cite{MN,yangenergy,rikuanttila}, it settled to the following form: There exists a constants $C,\Lambda \geq 1$ s.t. for every $B(x,r)$ there exists a $\psi_B \in \cF_2$ with $\psi_B|_B=1$ and $\psi_B|_{X\setminus 2B}=0$ s.t. for every $f\in \cF_p$ and every quasicontinuous representative $\tilde{f}$ of $f$ we have
\[
\int_{2B} \tilde{f}^2 d\Gamma_2\langle \psi_B\rangle \leq C \Gamma_2\langle f\rangle(\Lambda B) + C\frac{1}{r^\beta}\int_{2B} f^2 d\mu. 
\]
 In this form, the inequality does not appear to have a familiar analogue in classical PDE theory or in nonlinear potential theory. Indeed, in Euclidean spaces this inequality is a triviality with $\psi_B(x)=
\min(1,\frac{d(x,X\setminus 2B)}{r})$. Note that a $p\neq 2$ analogue was considered by Yang in \cite{yangenergy}. 

A starting point for our work is the fact noticed by a few authors (e.g. Murugan and Anttila) in recent years that one could write the cutoff Sobolev inequality in an different equivalent form, which lends itself to an easier analysis.  For the equivalence of these two forms, see Lemma \ref{lem:equivalence} below. We say that the structure satisfies the $p$-cutoff Sobolev inequality ${\rm CS}_{p}$, if the following holds. There exists constants $C_{\rm CS},\Lambda \geq 1$ s.t. for every $B(x,r)$ there exists a $\psi_B \in \cF_p$ with $\psi_B|_B=1$ and $\psi_B|_{X\setminus 2B}=0$ s.t. for every $f\in \cF_p$ and every quasicontinuous representative $\tilde{f}$ of $f$ we have
\begin{equation}\label{eq:CS}
\int_{2B} |\tilde{f} - f_B|^p d\Gamma_p\langle \psi_B\rangle \leq C_{\rm CS}\Gamma_p\langle f\rangle(\Lambda B). 
\end{equation}
Written in this way, the inequality becomes independent of $\beta$ and gains a much more familiar form: it is a type of two-weighted Poincar\'e inequality. The important difficulty here, however, is that $\Gamma_p\langle \psi_B\rangle$ can be singular with respect to the reference measure $\mu$, see e.g. \cite{kusuoka,KCsing,NMsing,ASEBShimizu, KS, yangenergy,murugan2023first} for discussion on the singularity phenomena. In fact, this difficulty with singularity is evident in the inequality: the average $f_B$ on the left hand side is computed with respect to the measure $\mu$ and not the possibly singular measure $\Gamma_p\langle \psi_B\rangle$. This remark is quite important, since $\Gamma_p$ may fail to be doubling (see e.g. \cite[Theorem 8.4]{ASEBShimizu} for examples where it is not), and it is an important technical point that reoccurs in our proofs. Indeed, in the extension methods below, we also use $\mu$-averages throughout. A reader interested in this technical point may find the related two-weighted discussion of \cite{bjornkalamajska}, where singularity is also possible, quite illuminating. See also therein and \cite{rikuanttila} for a longer discussion on the connection to other inequalities and trace theorems. 

The weighted Poincar\'e inequality form suggests a connection to techniques from the field of analysis on metric spaces, an idea that was first stated by Riku Anttila in \cite{rikuanttila}. Recognizing this connection leads to a wealth of techniques one could try from analysis on metric spaces:  see  \cite{kinnunenkorte,mazya,sobmet,HajKo} for a paper whose techniques inspire us here, \cite{HKST,BB,He} for general introductions, \cite{HK,Ch,Shanmun} for some influential early papers and \cite{HeNonSmooth,MRC, KleinerICM,BonkICM} for some relevant surveys on analysis on metric spaces.  Using this form of the cutoff Sobolev inequality, we prove the following.
%{\color{red} Where introduced? At least \cite{barlowanders}, see also \cite{GHL2}.}
%{\color{blue} RA: CS was first in \cite{BarlowBassStability} on graphs and then \cite{barlow2006stability} on MMD spaces. These had $\delta > 0$ like I did. $\delta = 0$ was first in \cite{barlowanders}. But I believe it was never written as a Poincar\'e inequality before my paper and it was interpreted as a ``weighted Sobolev inequality''.
%In my opinion, this Poincar\'e inequality interpretation is one key part of the resolution and it is a one clear connection to Analysis on metric spaces.}

\begin{theorem}\label{thm:mainthm} If $(X,d,\mu,\cF_p,\Gamma_p)$ is a regular local $p$-Dirichlet structure, with $\mu$ doubling, and satisfying $\PI_p(\beta)$ and $\cCap_p(\beta)$. Then, $(X,d,\mu, \cF_p,\Gamma_p)$ satisfies the $p$-cutoff Sobolev inequality ${\rm CS}_p$.  
\end{theorem}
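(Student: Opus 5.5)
The plan is to prove \eqref{eq:CS} by reducing it to a localized logarithmic Caccioppoli-type estimate for functions supported in $2B$, and then to remove the support restriction by an extension ("blending") argument.

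\textbf{Step 1: construct the cutoff.} Fix $B=B(x,r)$ and build $\psi_B$ not as a single capacitary function but as a layered sum. Let $A_k=\{y: 2^{-k-1}r\le d(y,X\setminus 2B)<2^{-k}r\}$ be Whitney-type annuli near $\partial(2B)$. Cover each $A_k$ by balls $B_{k,i}$ of radius comparable to $2^{-k}r$; by doubling, the cover has bounded overlap. On each ball take the function $\psi_{B_{k,i}}$ given by $\cCap_p(\beta)$. Then patch these together, for instance by taking maxima of truncated capacitary functions, or by a "logarithmic" superposition $\psi_B=\sum_k c_k \phi_k$. Here each $\phi_k$ rises from $0$ to $1$ across a thin shell and has energy $\lesssim \mu(2B)r^{-\beta}$, and the weights satisfy $\sum_k c_k = 1$ and $\sum_k c_k^p<\infty$ small.

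The aim is that $\Gamma_p\langle\psi_B\rangle$ is dominated by $\sum_{k,i} c_k^p\,\Gamma_p\langle\psi_{B_{k,i}}\rangle$. For this we use locality and the chain/maximum rules for $\Gamma_p$ from Section~\ref{sec:pdirspace}.

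\textbf{Step 2: estimate $f$ against the cutoff energy on each Whitney ball.} Split
\[
|\tilde f-f_B|^p\lesssim |\tilde f-f_{B_{k,i}}|^p+|f_{B_{k,i}}-f_B|^p .
\]
The second term is a telescoping chain of $\mu$-averages along a chain of Whitney balls from $B_{k,i}$ to $B$. By $\PI_p(\beta)$ each link costs $\bigl(r_j^\beta\mu(B_j)^{-1}\Gamma_p\langle f\rangle(\Lambda B_j)\bigr)^{1/p}$. Integrating against $\Gamma_p\langle\psi_{B_{k,i}}\rangle(X)\lesssim\mu(B_{k,i})r_{k}^{-\beta}$, the factors $\mu$ and $r^\beta$ cancel, which is exactly why $\beta$ drops out of \eqref{eq:CS}. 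The geometric decay of the weights $c_k$ then makes the sum over chains converge to $C\,\Gamma_p\langle f\rangle(\Lambda B)$, using bounded overlap of the dilated Whitney balls.

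The first term, $\int |\tilde f-f_{B_{k,i}}|^p\,d\Gamma_p\langle\psi_{B_{k,i}}\rangle$, is the same inequality at a smaller scale, so no a priori gain is available. This is the crux.

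\textbf{Step 3: the self-improvement / blending step (main obstacle).} To handle the first term, test the energy identity with $g=|f-f_{B_{k,i}}|^p$, or a truncation of it, against $\psi_{B_{k,i}}$. The Leibniz/chain rule for $\Gamma_p$ gives
\[
\int g\,d\Gamma_p\langle\psi\rangle\approx \int \psi\, d\Gamma_p\langle g,\psi\rangle\text{-type terms},
\]
and Hölder then bounds these by $\Gamma_p\langle f\rangle(2B_{k,i})^{1/p}\bigl(\int |f-f_{B_{k,i}}|^p d\Gamma_p\langle\psi\rangle\bigr)^{(p-1)/p}$.

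This absorption argument only works when the left side is finite and $f$ is controlled on the support of $\psi$. So I would first prove the inequality for $f$ vanishing outside $2B$, using a capacitary/log-Caccioppoli argument as in Anttila's approach. Then I would reduce the general case to it: given $f$, form $f=(f-f_B)\eta+(f-f_B)(1-\eta)$, where $\eta$ is a Whitney partition of unity built from the $\psi_{B_{k,i}}$. Replace $f$ near $\partial(2B)$ by its Whitney blend $\sum_{k,i}\eta_{k,i}f_{B_{k,i}}$. Poincaré controls the energy of this blend, as in the Jones/Koskela extension argument, and its values are $\mu$-averages, so singularity of $\Gamma_p\langle\psi_B\rangle$ causes no harm.

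The delicate point, and the main expected difficulty, is showing that the energy of the blended function and the error $|\tilde f-\text{blend}|$ integrated against the possibly singular, nondoubling measure $\Gamma_p\langle\psi_B\rangle$ are both bounded by $C\Gamma_p\langle f\rangle(\Lambda B)$. The tool for this is bounded overlap, with the quasicontinuous representative handled through the regularity of the structure.
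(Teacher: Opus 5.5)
There is a genuine gap, located in the reduction at the end of Step~3. Splitting $f=(f-f_B)\eta+(f-f_B)(1-\eta)$ is the classical localization. The energy of $(f-f_B)\eta$ contains $\int|\tilde f-f_B|^p\,d\Gamma_p\langle\eta\rangle$, which is ${\rm CS}_p$ itself.

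Replacing $f$ near $\partial(2B)$ by the blend $\sum_{k,i}\eta_{k,i}f_{B_{k,i}}$ does give a function vanishing off $2B$ whose energy Poincar\'e controls. But it leaves the error $\int|\tilde f-\text{blend}|^p\,d\Gamma_p\langle\psi_B\rangle$ on the Whitney region next to $\partial(2B)$. That is exactly where your layered $\Gamma_p\langle\psi_B\rangle$ lives. Modulo terms Poincar\'e does control, this error is $\sum_{B'}\int_{B'}|\tilde f-f_{B'}|^p\,d\Gamma_p\langle\psi_B\rangle$ over Whitney balls $B'$. That is a family of local copies of ${\rm CS}_p$ against a possibly singular, non-doubling measure. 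Bounded overlap and regularity do not control it, so the argument is circular precisely at the point you flag as delicate.

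Your tool for the ``first term'' does not close this either. The axioms of a $p$-Dirichlet space contain no mixed energy measure $\Gamma_p\langle g,\psi\rangle$ and no Leibniz rule when $p\neq 2$. Even for $p=2$, where a product rule exists, turning it into an absorbable estimate needs $\psi$ harmonic (or with controlled Laplacian). Neither the $\cCap_p(\beta)$ functions $\psi_{B_{k,i}}$ nor your layered $\psi_B$ are. Separately, in Step~2 the factors $\mu(B_{k,i})(2^{-k}r)^{-\beta}$ and $(2^{-j}r)^{\beta}\mu(B_j)^{-1}$ cancel only for $j=k$; along a chain they leave ratios that can grow geometrically.

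Here is what is missing.
\begin{enumerate}
\item Take $\psi_B$ to be the capacitary minimizer of Lemma~\ref{lem:existenceofaminimizer}. Then $\psi_B$ is superharmonic in $2B$ \emph{and} $1-\psi_B$ is superharmonic in $X\setminus B$. Your layered cutoff has no variational property, so the log-Caccioppoli step you want for $f$ vanishing off $2B$ is unavailable for it.
\item For such $u$, comparing $u$ with $\min(\max(u,h),1)$ gives $\Gamma_p\langle u\rangle(\{h\ge 1\})\le\Gamma_p\langle h\rangle(\Omega)$ whenever $h\le 0$ off $\Omega$ (Lemma~\ref{lem:LogCaccioppoli}). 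Maz'ya truncation over dyadic level sets then yields $\int|g|^p\,d\Gamma_p\langle u\rangle\lesssim\Gamma_p\langle g\rangle(\Omega)$ for $g$ vanishing off $\Omega$.
\item After normalizing the mean of $f$, use Whitney blending (Lemma~\ref{lem:whitneyblending}, which is where your Jones--Koskela idea belongs) \emph{twice}. Take $h_2=f$ on $\tfrac{3}{2}B$ with $h_2=0$ off $2B$, and $h_1=0$ on $B$ with $h_1=f$ off $\tfrac{3}{2}B$. Both have energy $\lesssim\Gamma_p\langle f\rangle(\Lambda B)$.
\item Each $h_i$ agrees with $f$ \emph{exactly} on complementary regions, so $|f|\le|h_1|+|h_2|$ everywhere with no error term. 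The two pieces are handled by $u=\psi_B$ on $\Omega=2B$ and by $u=1-\psi_B$ on $\Omega=X\setminus B$, using $\Gamma_p\langle 1-\psi_B\rangle=\Gamma_p\langle\psi_B\rangle$.
\end{enumerate}
With this, your Steps~1--2 become unnecessary.
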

In fact, the proof shows that one can use harmonic $\psi_B$ as cutoff functions, i.e. capacity minimizers. This fact is new even for manifolds and graphs for $\beta=p$ and in analysis on metric space. Previously, cutoff functions were constructed as truncated and scaled distance functions or by using resolvents. A difficulty in using harmonic functions in the cutoff Sobolev inequality is that they may fail to be Lipschitz even in fairly nice settings such as  doubling spaces with lower curature lower bounds (e.g. a cone $C(\alpha S)$ \cite{bbi} over the metric space $\alpha S$, where $\alpha S$ is a circle of length $2\pi \alpha$ and $\alpha>1$). In fractals, Lipschitz functions may be completely singular with respect to Sobolev spaces \cite[Theorem 1.1]{ASEBShimizu}, and thus distance functions are useless in the general case. As a fairly direct corollary of Theorem \ref{thm:mainthm}, by combining with results \cite[Theorem 4.5]{MN}, we get a proof of Theorem \ref{thm:mainthmhke}.

In previous literature, the cutoff Sobolev inequality was used to adapt the Moser iteration technique to fractals and prove elliptic and parabolic Harnack inequalities. This played an important role in establishing the stability of these properties in \cite{barlowmurugan, barlow2006stability, BarlowBassStability,barlow2020stability,barlowanders}. We established now the cutoff Sobolev inequality for all $p>1$ and this suggests that many of these results have an extension for all $p>1$.  Theorem \ref{thm:mainthm} is the first step in showing this in the general case, while some specific cases have been studied in \cite{murugan2023first,yangenergy}. The simpler characterization of the cutoff Sobolev inequality has also other applications. For example, one can use it to deduce singularity phenomena \cite{yangenergy,MN}. It also plays a role in the study of reflected processes \cite{muruganreflected}, many blow-up arguments and estimates for martingale dimension, see Theorem \ref{thm:diffstruct}.

The main technique and new idea in the proof of Theorem \ref{thm:mainthm} is a blending technique: if $f,g\in \cF_p$ and $B$ is a ball, then there exists a function $h\in \cF_p$ s.t. $h|_B=f|_B$ and $h|_{X\setminus 2B}=f|_{X\setminus 2B}$. Such a blending is known to be possible assuming the cutoff Sobolev inequality, but the novelty here is that one can prove it directly from capacity bounds and a Poincar\'e inequality by adapting an old method of Whitney \cite{whitney} via extension domains in \cite{jones}, \cite{tuominen}. Such Whitney covers have been frequently used (in particular \cite{muruganreflected}), but not in the study of the cutoff Sobolev inequality. This method is general, and appears useful in other problems as well. Indeed, Mathav Murugan has already adapted this method to resolve a non-local version of the resistance conjecture in \cite{murugannonlocal}.

We have written this introduction with slightly simpler statements. The main theorems have, however, a natural quasisymmetric invariance, and motivated by \cite{barlow2020stability, barlowmurugan} we aim to write all of our proofs in a quasisymmetrically invariant form. This leads to a slightly stronger formulation of the main theorem in Theorem \ref{thm:mainthm-sharper}. The proofs are not any more complicated by this generality, and only some care is needed in the notation. 

We end this overview with an open problem. The Poincar\'e inequality is in some cases almost as difficult to prove as the cutoff Sobolev inequality. However, in many settings one can show quite easily the following balled-capacity estimate. 
\vspace{3pt}
\noindent ${\rm Cap_{p,Ball}(\beta):}$ For all $A\geq 3$ there exists a $C_A>1$ s.t. for all $x,y\in X, x\neq y$ and $r=d(x,y)$ and all $\phi\in \cF_p$ s.t. $\phi|_{B(x,r/A)}=1, \phi|_{B(y,r/A)}=0$ we have
\[
\cE_p(\phi)\geq C_A \frac{\mu(B(x,r))}{r^\beta}.
\]
\begin{question}
Do $\cCap_p(\beta), {\rm Cap_{p,Ball}}(\beta)$ and volume doubling suffice to prove a Poincar\'e inequality? 
\end{question}
In fact, for $p\neq 2$ it is unkown if this holds even for quite simple fractals, such as the generalized Sierpi\'nski carpets. 
We consider Theorem \ref{thm:mainthm} as partial evidence towards a positive answer. Entirely new and different methods seem to be needed for its proof, however. 
If true, this would lead to an even simpler condition for elliptic and parabolic Harnack inequalities, and answer another problem \cite[Problem 7.7]{barlowmurugan}. Currently, a method of Bonk and Kleiner \cite[Proposition 3.1]{BK04} and Heinonen and Koskela \cite[Corollary 5.13]{HK} as adapted by Murugan and Shimizu \cite[Sections 3 and 4]{murugan2023first} indicates that this is true under the additional constraint of $\beta>Q-1$, when $\mu(B(x,r))\sim r^Q$. The general case is not known. It would help in proving Harnack inequalities for a variety of symmetric constructions for which it is presently not known, see e.g. \cite{anttilaseb, BourK,clais, debseboned} for some examples of such spaces. We next discuss the literature on the cutoff Sobolev inequality in more detail and give an outline the main proof ideas.

\subsection{Background on the cutoff Sobolev inequality}
The cutoff Sobolev inequality has been studied in quite a large number of works \cite{yangenergy, BarlowBassStability, barlow2006stability, barlowanders, rikuanttila}, and has appeared in many different forms. Here, we briefly aim to cite the main versions and discuss their relationships. The condition was introduced in \cite[Definition 1.4]{BarlowBassStability} for graphs in the form ${\rm CS}^\delta$: for all balls $B=B(x,r)$ there exists a H\"older continuous $\psi_B\in \cF_2$ s.t $B(y,s)\subset B(x,2r)$
\[
\int_{B(y,s)} \tilde{f}^2 d\Gamma_2\langle \psi_B\rangle \leq C \left(\frac{s}{r}\right)^\delta \left( \Gamma_2\langle f\rangle(\Lambda B) + C\frac{1}{s^\beta}\int_{2B} f^2 d\mu\right). 
\]
One could call this the strong cutoff Sobolev inequality, since it involves an energy decay for smaller balls. It also assumes the existence of a H\"older continuous cutoff functions, which is undesirable, as one would not like to assume any a priori regularity for Sobolev functions.

In \cite{barlowanders,barlow2006stability}, the condition on all scales was dropped and the inequality obtained the present form. As observed in \cite{barlowanders,yangenergy} the cutoff Sobolev inequality enjoys a self-improvement property, where one can show for every $\theta \in (0,1)$ there is a $C_\theta>0$ s.t. one can construct a cutoff function $\psi_B$ s.t. 
\[
\int_{2B} \tilde{f}^2 d\Gamma_2\langle \psi_B\rangle \leq \theta \Gamma_2\langle f\rangle(\Lambda B) + C_\theta \frac{1}{r^\beta}\int_{2B} f^2 d\mu. 
\]
That is, one can adjust the constants on the right hand side. 

In \cite{rikuanttila}, a simpler condition emerged. One says that the cutoff energy inequality  is satisfied if there exists a $\delta>0$ s.t. for all balls $B=B(x,r)$ there exists a cutoff function $\psi_B$ as before for which $\Gamma_2\langle \psi_B\rangle(B(y,s)) \lesssim \left(\frac{s}{r}\right)^\delta\frac{\mu(B(y,s))}{r^\beta}$. A desirable aspect of this estimate is that it only involves the cutoff function itself and not any condition on all functions $f\in \cF_2$. This implies ${\rm CS}^\delta$, and is further implied by EHI and Poincar\'e inequalities and capacity bounds. See \cite{rikuanttila} for more details. 

For a space satisfying volume doubling and Poincar\'e inequalities and for $p=2$, each of these inequalities are known to be equivalent, since each of them can be used to prove a parabolic Harnack inequality, which in turns implies any of them.  We would expect that all of them are also equivalent for $p>1$. This would from establishing elliptic Harnack inequalities, under the assumption of volume doubling, upper capacity bounds and Poincar\'e, and the results from \cite{rikuanttila}.

\subsection{Key proof ideas}
Theorem \ref{thm:mainthmhke} will follow from Theorem \ref{thm:mainthm} and \cite[Theorem 4.5]{MN} via very short argument. From these Theorem \ref{thm:diffstruct} is a direct corollary, as explained above. Thus, we focus on Theorem \ref{thm:mainthm}. The proof of Theorem \ref{thm:mainthm} rests on two observation. The first observation, already noted by some specialists (e.g. Murugan and Anttila) earlier, is that the cutoff Sobolev inequality is easier to show if $f=0$ outside $2B$. This follows from an argument that we learned from \cite[Theorem 3.5]{kinnunenkorte} via a simple energy inequality in Lemma \ref{lem:LogCaccioppoli}. This lemma is often proven as a consequence of the so called logarithmic Caccioppoli inequality, but it follows here from a simple argument using the variational principle and locality. This argument actually goes back to at least Maz'ya \cite[e.g. Corollary on p. 113]{mazya} and involves the so called Maz'ya truncation technique \cite[p. 110]{mazya} and \cite[Lemma 3.3]{kinnunenkorte}. For similar and related discussion see \cite{bakrysobolev,sobmet,saloffcostebook}, and the very illuminating article of Maz'ya \cite{mazyalecture}.

The second observation is a way to reduce effectively to the case similar to $f=0$ outside $2B$. Indeed, in an attempt to do so, one is led to show the following: there exists a function $h\in \cF_{p}$ s.t. $h|_{3/2 B}=f|_{3/2B}$ and $h|_{X\setminus 2B}=f_B$. (Technically, when $f_B\neq 0$, and the space is unbounded, we would need to consider a local Sobolev space here.) Such a function $h$ would at least capture a part of the function $f$ and have the desired boundary data. In this case, we could compute:
\[
\int_{3/2 B} |f-f_B|^p d\Gamma_p\langle \psi_B\rangle = \int_{3/2 B} |h-f_B|^p d\Gamma_p\langle \psi_B\rangle \lesssim \Gamma_p\langle h\rangle(\Lambda B).
\]
The last estimate would follow from the first observation. If we could now control the energy of $h$ in terms of the energy of $f$, we would at least have the desired with $3/2B$ replacing $2B$. We will return to the part $2B\setminus (3/2B)$ shortly, but now focus on the construction of $h$.

The problem of constructing such a function $h$ arises naturally in many settings, where it would be convenient to localize a Sobolev function. Classically, one would construct a function $h$ by first choosing a function $\psi_B\in \cF_p$ with  $\psi_B|_B=1$ and $\psi_B|_{X\setminus 2B}=0$ and setting $h=\psi_B f + (1-\psi_B) f_B$. The problem here: the control of the energy of $h$ involves a term of the form $\int_{2B} |\tilde{f}-f_B|^p d\Gamma_p\langle \phi\rangle$, and it brings us right back to the cutoff Sobolev inequality and an argument using this approach seems circular. (See also the beginning of Section \ref{sec:whitneyblending} for an argument along this line assuming the cutoff Sobolev inequality true.)

Curiously, there is a different way to construct $h$ that does not rely on the cutoff Sobolev inequality. The general form of the statement is given in the following lemma.

\begin{lemma}\label{lem:whitneyblending} Assume that $(X,d,\mu,\cF_p,\Gamma_p)$ is a regular local $p$-Dirichlet structure, with $\mu$ doubling, and satisfying $\PI_p(\beta)$ and $\cCap_p(\beta)$. For every $\eta\in (0,1)$ there exists a $C_{\rm WB}=C_{\rm WB}(C_{\rm cap}, C_{D}, C_{\PI})>0$ s.t. the following holds. Let $f,g\in \cF_p$ and $B_0$ is a ball in $X$, then, there exists a $h\in \cF_p$ such that $h|_{B_0}=f|_{B_0}$, $h|_{X\setminus (1+\eta)B_0} = g|_{X\setminus (1+\eta)B_0}$ with
\[
\Gamma_p\langle h\rangle(2B_0)\leq C_{\rm WB}\frac{\mu(B_0)}{\rad(B_0)^\beta} \fint_{(1+\eta)B_0}|f-g|^pd\mu + C_{\rm WB} \Gamma_p\langle f\rangle(2B_0)+C_{\rm WB}\Gamma_p\langle g\rangle(2B_0)
\]
\end{lemma}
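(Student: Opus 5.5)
We build $h$ by a Whitney-type blending in the annulus $A:=(1+\eta)B_0\setminus B_0$, in the spirit of Jones' extension argument. Write $r_0=\rad(B_0)$, and set $\delta(x)=\min\{d(x,X\setminus (1+\eta)B_0), d(x,B_0)\}$, the distance from $x\in A$ to the two "boundaries". We choose a Whitney cover $\{B_i=B(x_i,r_i)\}$ of the open annulus with $r_i=c\,\delta(x_i)$, with $c$ small depending on $\Lambda$. We arrange that the enlarged balls $2\Lambda B_i$ stay inside $A$ and have bounded overlap, which uses doubling. On this cover we take a partition of unity $\{\phi_i\}$ built from the cutoff functions $\psi_{B_i}$ of $\cCap_p(\beta)$, for instance $\phi_i=\psi_{B_i}/\sum_j\psi_{B_j}$. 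Since a ratio is awkward in a $p$-Dirichlet structure, we could instead use iterated products $\psi_{B_i}\prod_{j<i}(1-\psi_{B_j})$. Either way, locality and the Leibniz/chain rule for $\Gamma_p$ should give
\[
\Gamma_p\langle\phi_i\rangle(B)\lesssim \sum_{j:\,2B_j\cap 2B_i\neq\emptyset}\frac{\mu(B_j)}{r_j^\beta}.
\]
The balls intersecting a given ball have comparable radii, so this sum is at most $C\mu(B_i)/r_i^\beta$.

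The blend is then a weighted average. Let $\theta\in\cF_p$ equal $1$ on $B_0$ and vanish outside $(1+\eta)B_0$; the natural choice is $\theta=\sum_{i\in I_f}\phi_i$ (plus $1$ on $B_0$), where $I_f$ indexes the Whitney balls closer to $B_0$ than to the outer boundary. Set
\[
h=\theta f+(1-\theta)g=g+\theta(f-g).
\]
The boundary conditions are then immediate. The crucial point is that the gradient of $\theta$ lives only on Whitney balls $B_i$ lying "between" the two boundaries. Those have radius comparable to $\eta r_0$, since $\delta(x_i)\sim \eta r_0$ there, so there are boundedly many of them, with $\mu(B_i)\gtrsim_\eta\mu(B_0)$ by doubling.

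Deriving the energy bound has three pieces.

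\textbf{Leibniz expansion.} Locally on $B_i$, writing $h=g+\theta(f-g)$, the Leibniz rule gives
\[
\Gamma_p\langle h\rangle(B_i)\lesssim \Gamma_p\langle f\rangle+\Gamma_p\langle g\rangle+\int_{B_i}|\tilde f-\tilde g|^p\,d\Gamma_p\langle\theta\rangle .
\]

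\textbf{Interior balls.} The dangerous last term appears only on the boundedly many middle balls. Off those balls $\theta$ is locally constant (equal to $0$ or $1$), and locality gives $\Gamma_p\langle h\rangle\le\Gamma_p\langle f\rangle+\Gamma_p\langle g\rangle$ there. On $2B_0\setminus (1+\eta)B_0$ we have $h=g$ outright.

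\textbf{Middle balls (main obstacle).} On the middle balls we must control $\int|\tilde f-\tilde g|^p\,d\Gamma_p\langle\theta\rangle$ with $\Gamma_p\langle\theta\rangle$ possibly singular. This is exactly the cutoff-Sobolev-type term we are trying to avoid.

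To get around the singularity, we do not use $f-g$ itself in the Whitney region. Instead we replace it by its Whitney-averaged version, as in Jones' construction:
\[
h=g+\sum_{i\in I_f}\phi_i\,(f-g)_{B_i^*}\quad\text{on }A,
\]
with $h=f$ on $B_0$ and $h=g$ outside $(1+\eta)B_0$. Here $B_i^*$ is a reflected or enlarged ball. The coefficients are now constants, so the energy measure involves only $\Gamma_p\langle\phi_i\rangle$ times constants. Using $\sum\phi_i=1$ locally, each difference of the form $(f-g)_{B_i}-(f-g)_{B_j}$ for neighbouring balls is controlled by $\PI_p(\beta)$ applied to $f-g$ on $\Lambda B_i$. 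That yields $\frac{\mu(B_i)}{r_i^\beta}\cdot r_i^\beta\mu(B_i)^{-1}\Gamma_p\langle f-g\rangle(\Lambda B_i)$, which is harmless.

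The real difficulty is now near $\partial B_0$. There the piece $\sum_{i\in I_f}\phi_i(f-g)_{B_i}$ must glue to $f-g$ in $\cF_p$, with energy controlled, as the Whitney balls shrink. We handle this with a telescoping/chaining estimate along Whitney chains and Lebesgue differentiation, giving convergence in $L^p$ and of energies. Alternatively we may invoke the paper's earlier characterization of $\cF_p$ via Poincaré-type inequalities, with closedness of $\cE_p$ yielding $h\in\cF_p$. The only non-local term is the $L^p$ size of $f-g$ on the middle balls, weighted by $\mu(B_0)/r_0^\beta$. This gives the first term of the stated bound, with constants depending also on $\eta$, doubling and $\Lambda$.
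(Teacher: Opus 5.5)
\textbf{There is a gap: the construction is right, but the step that proves $h\in\cF_p$ is missing.}

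\textbf{What matches the paper.} Once you discard the $\theta$-blend, your $h-g$ equals $f-g$ on $B_0$, $\sum_{i\in I_f}\phi_i\,(f-g)_{B_i}$ on the Whitney region, and $0$ outside $(1+\eta)B_0$. This is exactly the paper's $h$ after its reduction to $g=0$. Your $I_f$ plays the role of $\cB_0=\{B: d(x_B,B_0)\le \eta\rad(B_0)/2\}$. Your energy computation inside $\Omega$ also matches the paper's Subcase B.2: differences of neighbouring averages via $\PI_p(\beta)$, plus boundedly many transition balls of radius $\gtrsim\eta\rad(B_0)$. (In a non-geodesic space those balls only satisfy $\eta\rad(B_0)\lesssim r_i\lesssim \rad(B_0)$, not $r_i\sim\eta\rad(B_0)$, but their number is still bounded.)

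\textbf{Where the gap is.} The missing step is the one you yourself flag: showing $h\in\cF_p$ with $\Gamma_p\langle h\rangle$ controlled across $\partial B_0$, where the Whitney balls shrink to zero. The open sets $B_0$ and $\Omega$ do not cover $\partial B_0$. Nothing in the axioms lets you pass from energy bounds on these two open sets to $h\in\cF_p$, or to a bound on $\Gamma_p\langle h\rangle(\partial B_0)$. That mass can be positive, since energy measures may be singular with respect to $\mu$. Chaining along Whitney balls and Lebesgue differentiation give only $L^p$ or pointwise information. To conclude by weak lower semicontinuity you need approximants in $\cF_p$ with uniformly bounded energy. Any approximant that switches from $f-g$ to the Whitney sum through a cutoff function brings back an integral of a difference against the energy measure of that cutoff. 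That is precisely the cutoff Sobolev term you set out to avoid.

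\textbf{What the Poincar\'e route requires.} Your alternative, the Poincar\'e-type characterization of $\cF_p$ (Lemma \ref{lem:PIcharSob}), is what the paper uses. However, all the work lies in verifying its hypothesis, which you do not do. With $g=0$ you need a finite measure $\nu$; the paper takes one of the form
\[
\sum_{B\in\cB}\frac{\rad(B)^\beta}{\mu(B)}\Gamma_p\langle f\rangle(2\Lambda^2B)\,\Gamma_p\langle\phi_B\rangle+\Gamma_p\langle f\rangle+\fint_{(1+\eta)B_0}|f|^p\,d\mu\sum_{\rad(B)\gtrsim\eta\rad(B_0)}\Gamma_p\langle\phi_B\rangle .
\]
You then need $\fint_B|h-h_B|^p\,d\mu\lesssim\frac{\rad(B)^\beta}{\mu(B)}\nu(\mathbf{\Lambda}B)$ for every sufficiently small ball $B$, including balls crossing $\partial B_0$.

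\textbf{The argument you are missing for crossing balls.} Suppose some ball of $\cB_0$ meeting $B$ has radius at most $\rad(B)$. Then $B\subset(1+\eta)B_0$, and every Whitney ball meeting $B$ belongs to $\cB_0$, has radius $\lesssim\rad(B)$, and lies in $LB$ for some $L=L(\Lambda)$. By Jensen and bounded overlap, $\fint_B|h-f_B|^p\,d\mu\lesssim\fint_{LB}|f-f_{LB}|^p\,d\mu$. Applying $\PI_p(\beta)$ to $f$ on $LB$ then gives the bound. In the remaining cases $h$ coincides near $B$ either with $0$ or with a finite sum of $\phi_{B'}$ over Whitney balls of radius $\gtrsim\rad(B)$, and your local computation handles these.

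It is this use of the Poincar\'e inequality on balls crossing the interface, rather than integrating $|f-g|^p$ against an energy measure, that makes the averaged construction independent of the cutoff Sobolev inequality. Without it, your proposal does not establish $h\in\cF_p$ or the stated energy bound.
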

The proof of this lemma involves adapting techniques from Sobolev-extension problems \cite{jones,tuominen, whitney,muruganreflected} and using averages with respect to a Whitney cover. We call this method \emph{Whitney blending}. The rough idea of using averages here is that it leads to energy terms that are more directly controlled by capacity bounds and the Poincar\'e inequality. Showing that the resulting function is in $\cF_p$ needs a new argument that adapts a characterization of Sobolev functions in terms of Poincar\'e inequalities and is inspired by \cite{sobmet}. There are quite a few other details here and the proof  will be the most technical part of the paper, although the definition of $h$ itself is explicit and somewhat simple. See Section \ref{sec:whitneyblending}, for more explanation of the ideas and references. 

Lemma \ref{lem:whitneyblending} is the most important tool in our proof the cutoff Sobolev inequality. After this lemma, only one final idea is needed: to decompose $f$ into two parts, $h_1$, which equals $f$ on $3/2B$ and equals $0$ outside $X\setminus 2B$, and $h_2$, which equals $f$ outside $3/2 B$ and equals $0$ on $B$. One can bound $|f|\leq |h_1|+|h_2|$ on $2B$ directly, and apply a version of the argument from the first paragraph in this subsection to each $h_1,h_2$. It is helpful to note that the estimates for $h_1$ and $h_2$ can be seen as somewhat symmetric in that $B$ and $X\setminus 2B$ switch roles. To formalize this idea of symmetry, however, forces us to consider also local Sobolev spaces in the proofs below.

We end this introduction with highlighting the main elements of the philosophy of our proofs: adopting a perspective of all $p\neq 2$ and comprehensively applying techniques from analysis on metric spaces in the study of Dirichlet forms. There have been several other recent advances with a very similar strategy, see e.g. \cite{erikssonmurugan, barlow2020stability, anttila2024constructions, rikuanttila}. Here, the adapted techniques are that of Maz'ya truncation, capacity integration, characterization of Sobolev functions in terms of Poincar\'e inequalities and Sobolev extension. 

\subsection{Outline} In Section \ref{sec:pdirspace}, we give the definitions and basic properties of $p$-Dirichlet spaces. Note that, by \cite{erikssonmurugan} a Dirichlet space satisfies these assumptions for $p=2$, and a reader familiar with this theory may skip this section. In Section \ref{sec:proofofmain} we show how Lemma \ref{lem:whitneyblending} leads to a proof of the main theorems. The most technical part of the paper is Section \ref{sec:whitneyblending}, where we prove Lemma \ref{lem:whitneyblending}. This proceeds through a sequence of auxiliary statements concerning Whitney covers, partitions of unity, and characterizations of functions $f\in \cF_p$.

\subsection*{Basic Notation}
Throughout $(X,d,\mu)$ will be a complete, locally compact, separable metric space, and  $\mu$ will be a Radon measure on $X$. If $\nu$ is another Radon measure, then we write $\mu \leq \nu$ if $\mu(A)\leq \nu(A)$ for all Borel sets $A\subset X$.

Open balls with center $x$ and radius $r>0$ are denoted $B(x,r)=\{y\in X : d(x,y)<r\}$ and the inflation of a ball $B=B(x,r)$ is denoted by $CB=B(x,Cr)$, for $C>0$. The center of a ball $B$ is denoted $x_B$ and its radius $\rad(B)$. As sets, two different balls may coincide, but we will always assume that a ball comes with a specified center and radius. For $A\subset X$, define $\diam(A)=\sup_{x,y\in A} d(x,y)$. If $\diam(X)<\infty$, we will only consider balls $B$ with radii $\rad(B)\leq 2\diam(X)$. If $A,B\subset X$ are not empty, define $d(A,B)=\inf_{a\in A, b\in B} d(a,b)$. If $x\in X$, we also write $d(x,A)=\inf_{a\in A} d(x,a)$.

The measure $\mu$ is usually assumed $C_D$-doubling: $\mu(B(x,2r))\leq C_D\mu(B(x,r))$, and this implies that $X$ is proper (all closed and bounded sets are compact) and $C_M=C_M(C_D)$-metrically doubling (for every ball $B\subset X$ there exists $x_1,\dots, x_{C_M} \in B$ s.t. $B\subset \bigcup_{i=1}^{C_M} B(x_i, \rad(B)/2)$). A space $(X,d,\mu)$ is called volume doubling if $\mu$ is a doubling measure. For a measurable subset $A\subset X$ of finite and positive measure define $f_A = \frac{1}{\mu(A)}\int_A f d\mu$, whenever the integral is defined.

Continuous functions are denoted $C(X)$, and compactly supported continuous functions are denoted $C_c(X)$. These spaces are equipped with the supremum norm $\|f\|_\infty = \sup_{x\in X}|f(x)|$. We denote the $L^p$-spaces by $L^p(X)=\{f:X\to [-\infty,\infty] : f \text{ measurable and } \int_X |f|^p d\mu <\infty\}$ with their norm $\|f\|_{L^p(X)}=\left(\int_X |f|^p d\mu\right)^\frac{1}{p}$. $\Lip(\R^n)$ is the collection of Lipschitz functions (there exists a constant $L\geq 0$ s.t. $|f(x)-f(y)|\leq Ld(x,y)$ for all $x,y\in X$) and $\Lip_0(\R^n)$ consists of those $f\in \Lip(\R^n)$ with $f(0)=0$. Write $\LIP[f]=\sup_{x,y\in X, x\neq y} \frac{|f(x)-f(y)|}{d(x,y)}$ and $\Lip_a[f](x)=\lim_{r\to 0} \LIP[f|_{B(x,r)}]$. The function $x\to \LIP[f|_{B(x,r)}]$ is upper semi-continuous for every $r>0$, and thus $x\to \Lip_a[f](x)$ is Borel measurable.

We will use $C_{\bullet}$ and $\Lambda$ to denote all \emph{structural constants}, e.g. the constants involved in doubling, capacity bounds and Poincar\'e inequalities. We write $f\lesssim g$, if $f\leq Cg$, where $C$ depends only on the structural constants. When introducing a new structural constant, which depends on old ones, we write $C_{\bullet}=C_\bullet(C_1,C_2,\dots)$ to indicate the previous structural constants in terms of which it can  be bounded by a coordinate-wise monotone relationship.

\section{$p$-Dirichlet spaces}\label{sec:pdirspace}

\subsection{Axioms and basic properties}
The axioms for a regular $p$-Dirichlet space were given in \cite{erikssonmurugan}, see therein for more discussion on the definition.

\begin{definition} \label{d:dir-space}
		We call $(X,d,\mu, \mathcal{E}_p, \mathcal{F}_p, \Gamma_p)$  a regular local $p$-Dirichlet space when the following properties hold:
		\begin{enumerate}  
			\item \textbf{Locally compact}: $(X,d,\mu)$ is a locally compact metric space equipped with a Radon measure $\mu$.
			\item \textbf{Completeness:} 	The space $\mathcal{F}_p$  is a subspace of $L^p(X,\mu)$ and
			$\mathcal{E}_p: \mathcal{F}_p \to [0,\infty)$ is a non-negative function such that $\mathcal{F}_p$ is a Banach space when equipped with the norm $\norm{f}_{\mathcal{F}_p}=(\norm{f}_{L^p}^p+\mathcal{E}_p(f))^{1/p}$.
			\item \textbf{Homogeneity:} For every $f\in \mathcal{F}_p$ there exists a finite non-negative Borel measure $\Gamma_p \langle f \rangle$ on $X$ such that $\Gamma_p \langle f \rangle(X)= \mathcal{E}_p(f)$, and for all $\lambda \in \mathbb{R}$
			\[
			\Gamma_p\langle \lambda f\rangle = |\lambda|^p \Gamma_p\langle f\rangle.
			\]
			The measure $\Gamma_p\langle f \rangle$ associated to $f \in \mathcal{F}_p$ is called the \emph{energy measure of $f$}.
			\item \textbf{Triangle inequality:} For every $f,g\in \mathcal{F}_p$ and every Borel set $A\subset X$, 
			\begin{equation} \label{e:def-sublinear}
				\Gamma_p\langle f+g\rangle(A)^{\frac{1}{p}}\le \Gamma_p\langle f\rangle(A)^{\frac{1}{p}}+\Gamma_p\langle g\rangle(A)^{\frac{1}{p}}
			\end{equation}
			\item \textbf{Lipschitz contractivity:} For all $f\in \mathcal{F}_p$, $g\in \Lip(\mathbb{R})$ with $g(0)=0$,  we have $g \circ f \in \mathcal{F}_p$ and
			\begin{equation} \label{e:def-chain}
				d\Gamma_p\langle g\circ f\rangle \le \LIP[g]^p d\Gamma_p\langle f\rangle.
			\end{equation}
			\item \textbf{Strong locality:} For every $f\in \mathcal{F}_p$ and any open set $A\subset X$, if $f|_A=c$ $\mu$-a.e., then $\Gamma_p\langle f\rangle(A)=0$. 
			\item \textbf{Weak lower semicontinuity:} For every $f\in L^p(X)$, and any sequence of functions $(f_i)_{i \in \mathbb{N}}$ in $\mathcal{F}_p$ such that $f_i\to f \in L^p(X)$ with $\sup_{i\in \mathbb{N}} \mathcal{E}_p(f_i)<\infty$, then $f\in \mathcal{F}_p$ and
			\begin{equation} \label{e:def-lsc}
				\Gamma_p\langle f \rangle(A)\le \liminf_{i\to \infty} \Gamma_p\langle f_i \rangle(A)
			\end{equation}
			for every Borel set $A\subset X$. 
			\item \textbf{Regularity:} $C_c(X)\cap \cF_p$ is dense in $\cF_p$ in the norm $\norm{f}_{\mathcal{F}_p}$ and in $C_c(X)$ in the norm $\|f\|_\infty = \sup_{x\in X} |f(x)|.$
		\end{enumerate}
	\end{definition}

\vskip2em

\begin{center}
\noindent \emph{\textbf{Standing assumption:} We will assume throughout the rest of the paper that $(X,d,\mu, \mathcal{E}_p, \mathcal{F}_p, \Gamma_p)$ is a regular local $p$-Dirichlet structure.}
\end{center}

\vskip2em

If $O\subset X$ is an open set, we define its capacity as $\cCap(O)=\inf\{ \norm{f}_{\mathcal{F}_p}^p : f\in \cF_p , f|_O = 1\}$. For $A\subset X$, we define $\cCap(A)=\inf\{\cCap(O) : A\subset O,\  O \text{ is open }\}.$

A function $f$ is called quasicontinuous, if for every $\epsilon>0$ there exists an open set $O$ with $\cCap(O)<\epsilon$ s.t. $f|_{X\setminus O}$ is continuous. If $f_i:X\to [-\infty,\infty]$ is a sequence of functions, we say that $f_i\to f$ quasieverywhere if for every $\epsilon>0$ there exists an open set $O$ with $\cCap(O)<\epsilon$ s.t. $\lim_{i\to \infty} f_i(x)=f(x)$ for $x\not\in O$. 

The space $\cF_p$, as we define it,  consists of equivalence classes, and each equivalence class contains an essentially unique quasicontinuous representative: We call $\tilde{f}:X\to [-\infty,\infty]$ a quasicontinuous representative for $f\in \cF_p$ if $\tilde{f}$ is measurable, quasicontinuous and $\tilde{f}=f$ a.e. In the following proposition, we collect this and some other basic properties of $p$-Dirichlet spaces from \cite{erikssonmurugan}.

\begin{proposition}\label{prop:properties} 
\begin{enumerate}
    \item[(i)] $\mu(O)\leq \cCap(O)$ for all Borel sets $O\subset X$.
    \item[(ii)] For every $f\in \cF_p$ there exists a quasicontinuous representative.
    \item[(iii)] If $\tilde{f}, \tilde{\tilde{f}}$ are two quasicontinuous representatives of $f$, then $\cCap(\{x:\tilde{f}(x)\neq \tilde{\tilde{f}}(x)\}=0$.
    \item[(iv)] For every $f\in \cF_p$ and every set $A$ with $\cCap(A)=0$ we have $\Gamma_p\langle f\rangle(A)=0$.
    \item[(v)] If $f,g\in \cF_p$ are quasicontinuous, $A\subset X$ is measurable and $f|_A=g|_A$, then $\Gamma_p\langle f\rangle(A)=\Gamma_p\langle g\rangle(A)$.
    \item[(vi)] If $\phi\in \Lip_0(\R)$ and $f\in \cF_p$ is quasicontinuous, then $\phi\circ f\in \cF_p$ with $\Gamma_p\langle f\rangle \leq \Lip_a[\phi] \circ f \Gamma_p\langle f\rangle$.
    \item[(vii)] If $f_i\in C_c(X)\cap \cF_p$ is a sequence of continuous functions converging to $f$ in $\cF_p$, then a subsequence converges quasi-everywhere to a quasicontinuous representative $\tilde{f}$ of $f$.
    \item[(viii)] If $\phi \in \Lip_0(\R^n)$ is continuously differentiable, and $f_1,\dots, f_n \in \cF_p$ are quasicontinuous then $\phi(f_1,\dots, f_n)\in \cF$ and
    \[
    \Gamma_p\langle \phi(f_1,\dots, f_n)\rangle \leq 1 \vee n^{\frac{p-2}{2}}\sum_{i=1}^n |\partial_i \phi(f_1,\dots, f_n)|^p \Gamma_p\langle f_i\rangle
    \]
    
\end{enumerate}
\end{proposition}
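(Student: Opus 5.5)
The plan is to prove the items of Proposition~\ref{prop:properties} in the order (i), (ii)/(vii), (iii), (iv), (v), (vi), (viii). Each item uses only the axioms of Definition~\ref{d:dir-space} and the items proved before it.

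\emph{Items (i), (ii), (vii).} For (i), take $O$ open and any $f\in\cF_p$ with $f|_O=1$. Then $\norm{f}_{\cF_p}^p\ge\int_O|f|^p\,d\mu=\mu(O)$. Taking the infimum gives $\mu(O)\le\cCap(O)$, and outer regularity of the Radon measure $\mu$ extends this to all Borel sets.

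The key auxiliary estimate is
\[
\cCap(\{|g|>\lambda\})\le\lambda^{-p}\norm{g}_{\cF_p}^p\qquad\text{for } g\in C_c(X)\cap\cF_p .
\]
To prove it, consider $\min(|g|/\lambda,1)$. This lies in $\cF_p$ by Lipschitz contractivity, equals $1$ on the open set $\{|g|>\lambda\}$, and has $\cF_p$-norm at most $\lambda^{-1}\norm{g}_{\cF_p}$. With this estimate, (ii) and (vii) follow by the classical argument. By regularity, choose $f_i\in C_c(X)\cap\cF_p$ with $\norm{f_{i+1}-f_i}_{\cF_p}\le 4^{-i}$. Set $O_i=\{|f_{i+1}-f_i|>2^{-i}\}$; then $\cCap(O_i)\le 2^{-ip}$. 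Off $\bigcup_{i\ge k}O_i$ the sequence $(f_i)$ converges uniformly, so its limit is quasicontinuous. If one starts from an arbitrary sequence converging in $\cF_p$, the same argument applies after first passing to such a fast subsequence.

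\emph{Items (iii), (iv), (v).} Item (iii) is the step I expect to be the main obstacle, because it is not a purely formal consequence of the axioms. It reduces to a standard lemma: if $u$ is quasicontinuous and $u\ge0$ $\mu$-a.e. on an open set $G$, then $u\ge0$ quasi-everywhere on $G$. I would prove this lemma following the Fukushima--Oshima--Takeda approach. Take an open $O$ of small capacity with $u|_{X\setminus O}$ continuous. Then $\{u<0\}\cap G\subset O\cup U$ for a set $U$ that is open relative to $X\setminus O$. One must show $\cCap(U\setminus O)$ is small, using that $\mu(U)=0$ and that the capacitary potential of $O$ can be modified on a $\mu$-null set. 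This is where care is needed.

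For (iv), by countable subadditivity it suffices to treat bounded $f$. Given $\epsilon>0$, choose an open $O\supset A$ and $e\in\cF_p$ with $0\le e\le1$, $e|_O=1$ and $\norm{e}_{\cF_p}^p<\epsilon$. Write $f=(1-e)f+ef$. By strong locality, $\Gamma_p\langle(1-e)f\rangle(O)=0$. For the other term, use the smooth two-variable chain rule (the $n=2$ case of (viii), proved first for continuous $f,e$ and then by approximation) to bound $\Gamma_p\langle ef\rangle(O)\lesssim\norm{f}_\infty^p\,\Gamma_p\langle e\rangle(X)+\Gamma_p\langle f\rangle(O)$. The remaining term $\Gamma_p\langle f\rangle(O)$ is then handled by shrinking $O$ and using weak lower semicontinuity. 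The goal is to conclude $\Gamma_p\langle f\rangle(A)\lesssim\epsilon$.

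For (v), set $h=\tilde f-\tilde g$, so $h=0$ on $A$. For $\epsilon>0$ let $h_\epsilon=\operatorname{sign}(h)(|h|-\epsilon)_+$; this is in $\cF_p$ by contractivity. By quasicontinuity, the set $\{|h|<\epsilon\}$ is open up to a set of arbitrarily small capacity. Strong locality, combined with (iv), then gives $\Gamma_p\langle h_\epsilon\rangle(\{|h|<\epsilon\})=0$. Since $h_\epsilon\to h$ in $\cF_p$, lower semicontinuity yields $\Gamma_p\langle h\rangle(A)=0$. The triangle inequality then gives $\Gamma_p\langle f\rangle(A)^{1/p}\le\Gamma_p\langle g\rangle(A)^{1/p}$, and the reverse inequality follows by symmetry.

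\emph{Items (vi), (viii).} For (vi), fix $\delta>0$ and partition $\R$ into intervals $I_k$ on which $\LIP[\phi|_{I_k}]\le\sup_{I_k}\Lip_a[\phi]+\delta$. On the sets $\{\tilde f\in I_k\}$, compare $\phi\circ f$ with $\psi_k\circ f$, where $\psi_k$ is the global Lipschitz extension of $\phi|_{I_k}$. Applying (v) and contractivity on each level set and summing over $k$ gives the bound with an error controlled by $\delta$; then let $\delta\to0$.

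For (viii), localize in the same way, now on level sets where $(\tilde f_1,\dots,\tilde f_n)$ lies in a small cube $Q$. On each such set, replace $\phi$ by its affine approximation $L_Q(y)=\phi(y_Q)+\nabla\phi(y_Q)\cdot(y-y_Q)$, with the error controlled by the modulus of continuity of $\nabla\phi$. Then apply the triangle inequality for $\Gamma_p$ to $\sum_i\partial_i\phi(y_Q)f_i$, together with the homogeneity axiom. Passing from $(\sum_i a_i)^p$ to $\sum_i a_i^p$ costs the factor $1\vee n^{\frac{p-2}{2}}$.
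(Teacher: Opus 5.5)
Your items (i) and (ii)/(vii) are fine. Item (iii), which you flag as the main obstacle, is in fact immediate: $U\cup O$ is open, and since $\mu(U)=0$ every admissible function for $O$ is admissible for $U\cup O$, so $\cCap(U\cup O)\le\cCap(O)$.

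\textbf{The gap in (iv).} Your (v), (vi) and (viii) all rest on (iv), and the argument for (iv) fails. Since $ef=f$ on the open set $O$, strong locality gives $\Gamma_p\langle ef\rangle(O)=\Gamma_p\langle f\rangle(O)$. So the splitting $f=(1-e)f+ef$ carries no information, and your product-rule bound reads $\Gamma_p\langle f\rangle(O)\lesssim\norm{f}_\infty^p\epsilon+\Gamma_p\langle f\rangle(O)$, which is empty. Shrinking $O$ only brings $\Gamma_p\langle f\rangle(O)$ down to $\Gamma_p\langle f\rangle(\bigcap O)\ge\Gamma_p\langle f\rangle(A)$, which is the quantity you are trying to show vanishes.

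The ordering is also circular. Item (iv) uses the $n=2$ case of (viii). Your (viii) localizes on non-open level sets and so needs (v), and (v) uses (iv). Moreover, $e$ is not continuous, so a ``continuous case first'' does not reach it.

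What works is the setwise weak lower semicontinuity axiom, applied along functions that vanish near $A$:
\begin{itemize}
\item Reduce to bounded $f$ via density of $C_c(X)\cap\cF_p$ and $\Gamma_p\langle f\rangle(A)^{1/p}\le\Gamma_p\langle g\rangle(A)^{1/p}+\cE_p(f-g)^{1/p}$. Countable subadditivity is not the right tool here.
\item Take $0\le e_k\le 1$ with $e_k=1$ on open sets $O_k\supset A$ and $\norm{e_k}_{\cF_p}\to0$. Set $f_k=f-\max(\min(f,Me_k),-Me_k)$ with $M=\norm{f}_\infty$.
\item Then $f_k=0$ on $O_k$ and $f_k\to f$ in $L^p$. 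Writing $\min$ and $\max$ through $|\cdot|$ gives $\sup_k\cE_p(f_k)<\infty$ without any product rule.
\item Hence $\Gamma_p\langle f\rangle(A)\le\liminf_k\Gamma_p\langle f_k\rangle(A)=0$.
\end{itemize}

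The same device is needed in (v). Strong locality applies only to open sets, and $\{|h|<\epsilon\}\setminus O_\delta$ is neither open nor of zero capacity. So ``strong locality combined with (iv)'' does not give $\Gamma_p\langle h_\epsilon\rangle(\{|h|<\epsilon\})=0$. Separately, your claim $h_\epsilon\to h$ in $\cF_p$ is unjustified, though $L^p$ convergence together with $\Gamma_p\langle h_\epsilon\rangle\le\Gamma_p\langle h\rangle$ is all that lower semicontinuity needs.

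\textbf{The gap in (viii).} The step ``replace $\phi$ by $L_Q$, with the error controlled by the modulus of continuity of $\nabla\phi$'' is not available. Controlling the error means bounding $\Gamma_p\langle(\phi-L_Q)(f_1,\dots,f_n)\rangle$ on $\{(\tilde f_1,\dots,\tilde f_n)\in Q\}$ by $\omega(\diam Q)^p\sum_i\Gamma_p\langle f_i\rangle$, where $\omega$ is the modulus of continuity of $\nabla\phi$. That is a contraction estimate for a Lipschitz function of $n$ variables, which is (viii) itself. The axioms give contractivity only for $g\circ f$ with $g:\R\to\R$. You also do not know a priori that $\phi(f_1,\dots,f_n)\in\cF_p$.

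The paper removes the error term altogether:
\begin{itemize}
\item It replaces $\phi$ globally by its piecewise-affine interpolant $\phi_\epsilon$ on an orthogonal simplicial tessellation of mesh $\epsilon$.
\item On each $f^{-1}(\Delta)$, $\phi_\epsilon\circ f$ is then exactly an affine combination of the $f_i$, so homogeneity, the triangle inequality and (v) bound its energy with no error.
\item The mean value theorem along the orthogonal edges shows the slopes converge to $\nabla\phi(f)$.
\item Weak lower semicontinuity then yields $\phi(f_1,\dots,f_n)\in\cF_p$ with the stated bound.
\end{itemize}

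For context, the paper does not reprove (ii)--(vi); it cites them from \cite{erikssonmurugan} and argues only (i), (vii) and (viii). Your (vi) via (v) is the standard argument and is fine once (iv) and (v) are repaired as above.
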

\begin{proof}
    The first property is immediate from the definition for open sets, and follows from Borel regularity of $\mu$ for all Borel sets. Properties (ii)-(vi) follow from \cite[Proposition 2.34 and Proposition 2.35]{erikssonmurugan}. 

    Claim (vii) follows by a classical argument, see e.g. \cite[Proof of Theorem  3.7]{Shanmun}. Pass to a subsequence s.t. $\|f_i-f_j\|_{\cF_p}^p\leq 4^{-\min(i,j)}$. Let $E_{i,j} = \{ x: |f_i-f_j|> 2^{-i/p}\}$. Considering the function $h=\min(2^{i/p}|f_i-f_j|,1)$, one sees that $\cCap(E_{i,j}) \leq 2^{-i}$. Further, let $E_i=\bigcup_{j\geq i} E_{i,j}$ and $E=\bigcap_{i\in \N} E_i$. By \cite[Lemma 2.27]{erikssonmurugan} $\cCap(E_i)\leq 2^{p-1}\left(\sum_{j=i}^\infty 2^{-i/p}\right)^p$, and thus $\cCap(E)=0$. It is direct to verify that $f_i$ is a Cauchy sequence for $x\not\in E$. Let $\tilde{f}(x)=\lim_{i\to \infty} f_i(x)$, which is now defined for all $x\not\in E$. For $x\in E$, set $\tilde{f}(x)=0$. Since $f_i\to f$ in $L^p(X)$, we get $\tilde{f}=f$ $\mu$-a.e. The function $\tilde{f}$ is a uniform limit of continuous functions outside the open set $E_i$, and thus $\tilde{f}$ is continuous outside $E_i$. Thus $\tilde{f}$ is a quasicontinuous representative for $f$.

    Claim (viii) has a proof quite similar to \cite[Proposition 2.35(3)]{erikssonmurugan}. First, consider a linear function $\phi(x)=\sum_{i=1}^n\lambda_i x_i$. By the triangle inequality and locality, we have
    \[
    \Gamma_p\langle \phi( f_1,\dots, f_n)\rangle (A)^\frac{1}{p} \leq \sum_{i=1}^n |\partial_i \phi(f_1,\dots, f_n)|\Gamma_p\langle f_i\rangle (A)^\frac{1}{p}.
    \]
    Using H\"older's inequality, we get
    \[
    \Gamma_p\langle \phi(f_1,\dots, f_n)\rangle (A) \leq 1\vee n^{\frac{p-2}{2}}\sum_{i=1}^n |\partial_i \phi(f_1,\dots, f_n)|^p\Gamma_p\langle f_i\rangle (A).
    \]
    This is the desired claim for linear functions. 
    
    Next, let $\epsilon>0$ and divide $\R^n$ to simplices $\Delta(p_1,\dots, p_{n+1}):=\{\sum_{i=1}^{n+1} w_i p_i : \sum_{i=1}^n w_i =1, w_i \geq 0\}$ for $(p_1,\dots, p_{n+1})\in S_\epsilon$, in such a way that the following properties hold:
    \begin{enumerate}
        \item The  simplicies form a simplicial complex, i.e. distinct simplices have disjoint interiors and so that pairs of simplices intersect in their faces.
        \item The simplicies are othogonal: $p_i-p_1=\xi_i \epsilon e_{i-1}$, where $e_1,\dots, e_n$ is a standard basis and $\xi_i=\pm1$ for $i=1,\dots, n$. 
    \end{enumerate}
    Then   $\diam(\Delta(p_1,\dots, p_{n+1}))\leq 2\epsilon$. Such simplices can be generated by taking the tesselation corresponding to the fundamental domain $\Delta_0 = \{(x_1,\dots, x_n) : 0\leq x_1 \leq x_2 \cdots \leq x_n \leq \epsilon\}$ of the group $G$ of isometries on $\R^n$ generated by all translations by vectors in $\epsilon \Z^n$, and permutations of co-ordinates.
 
    For each $i=2,\dots, {n+1}$, there exists $q_i\in\Delta(p_1,\dots, p_{n+1})$ s.t. $\phi(p_i)-\phi(p_1)=\langle\nabla \phi(q_i),p_i-p_1\rangle$. Since $p_i-p_1$ are orthogonal, there exists a vector $F(p_1,\dots, p_{n+1})$ s.t. 
    \[
    \langle F(p_1,\dots, p_{n+1}), p_i-p_1\rangle = \langle\nabla \phi(q_i),p_i-p_1\rangle,
    \]
    and the $i$'th component of $F(p_1,\dots, p_n)$ is given by the $i$'th component of $\nabla \phi(q_i)$. This allows us to define approximations $\phi_\epsilon(x)=\sum_{i=1}^{n+1} w_i \phi(p_i)$ for $x=\sum_{i=1}^{n+1} w_i p_i\in \Delta(p_1,\dots, p_{n+1})$ and $(p_1,\dots, p_{n+1})\in S_\epsilon$. The functions $\phi_\epsilon$ are uniformly Lipschitz, and piecewise linear. By a simple application of the mean-value property, for $x\in \Delta(p_1,\dots, p_{n+1}))$, we have
    \begin{align*}
    \phi_\epsilon(x)&=\phi(p_1)+\sum_{i=1}^{n+1} w_i (\phi(p_i)-\phi(p_1)) \\
    &= \phi(p_1)+\sum_{i=1}^{n+1} w_i \langle\nabla \phi(q_i),p_i-p_1\rangle  \\
    &= \phi(p_1)+\sum_{i=1}^{n+1} w_i \langle F(p_1,\dots, p_{n+1}),p_i-p_1\rangle  \\
    &=\phi(p_1)+\langle F(p_1,\dots, p_{n+1}),\sum_{i=1}^{n+1}w_ip_i-p_1\rangle \\
    &=\phi(p_1)+\langle F(p_1,\dots, p_{n+1}),x-p_1\rangle.
    \end{align*}
    Thus, 
    \[
    \Gamma_p\langle \phi_\epsilon \circ f\rangle|_{f^{-1}(\Delta(p_1,\dots, p_{n+1}))} \leq 1 \vee n^{\frac{p-2}{2}} \sum_{i=1}^n |F(p_1,\dots, p_{n+1})_i|^p \Gamma_p\langle f_i\rangle|_{f^{-1}(\Delta(p_1,\dots, p_{n+1}))}.
    \]
    Now, choose for each $x\in X$ a $\mathbf{p}_{\epsilon}(x)=(p_1(x),\dots, p_{n+1}(x))\in S_\epsilon$, s.t. $(f_1(x), \dots, f_n(x))\in \Delta(\mathbf{p}_{\epsilon}(x))$. There is an ambiguity whenever $(f_1(x), \dots, f_n(x))$ lies on a face of a simplex, but any choise that guarantees measurability will do (e.g. for each of the countably many faces, choose one side).  It is direct to see that $F(\mathbf{p}_{\epsilon}(x))\to \nabla\phi(f_1,\dots, f_n)(x)$ pointwise everywhere. By using this on the inequality
    \[
    \Gamma_p\langle \phi_\epsilon \circ f\rangle \leq 1 \vee n^{\frac{p-2}{2}} \sum_{i=1}^n F(\mathbf{p}_{\epsilon}(x))_i \Gamma_p\langle f_i\rangle,
    \]
    the claim follows by sending $\epsilon \to 0$.
\end{proof}

Finally, we give a reduction of the general cutoff Sobolev inequality to the case of continuous functions.

\begin{lemma}\label{lem:reductiontocont}
    If the cutoff Sobolev inequality holds for all $f\in C_c(X)\cap \cF_p$, then it holds for all  $f\in \cF_p$.
\end{lemma}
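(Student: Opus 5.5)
We approximate a general $f\in\cF_p$ by continuous functions and pass to the limit on both sides of \eqref{eq:CS}, with the cutoff $\psi_B$ fixed. Fix a ball $B$ and let $\psi_B$ be the cutoff function for which \eqref{eq:CS} holds for all $f\in C_c(X)\cap\cF_p$. Let $f\in\cF_p$ be arbitrary and let $\tilde f$ be a quasicontinuous representative. By regularity, choose $f_i\in C_c(X)\cap\cF_p$ with $f_i\to f$ in $\cF_p$. By Proposition \ref{prop:properties}(vii), after passing to a subsequence, $f_i\to \hat f$ quasi-everywhere, where $\hat f$ is a quasicontinuous representative of $f$. By Proposition \ref{prop:properties}(iii), $\hat f=\tilde f$ outside a set $N$ with $\cCap(N)=0$.

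The exceptional sets are invisible to the energy measure $\Gamma_p\langle\psi_B\rangle$. Quasi-everywhere convergence means convergence outside open sets $O_\epsilon$ with $\cCap(O_\epsilon)<\epsilon$. Hence the divergence set is contained in $\bigcap_k O_{1/k}$, which has capacity zero. By Proposition \ref{prop:properties}(iv), every capacity-zero set is $\Gamma_p\langle \psi_B\rangle$-null. Therefore $f_i\to\tilde f$ holds $\Gamma_p\langle\psi_B\rangle$-almost everywhere, and the union of the two capacity-zero sets is also null.

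Now pass to the limit in the inequality. Since $f_i\to f$ in $L^p(X)$ and $\mu(B)>0$, we have $(f_i)_B\to f_B$. So $|f_i-(f_i)_B|^p\to|\tilde f-f_B|^p$ holds $\Gamma_p\langle\psi_B\rangle$-a.e. on $2B$. Fatou's lemma then gives
\[
\int_{2B}|\tilde f-f_B|^p\,d\Gamma_p\langle\psi_B\rangle\le\liminf_{i\to\infty}\int_{2B}|f_i-(f_i)_B|^p\,d\Gamma_p\langle\psi_B\rangle\le C_{\rm CS}\liminf_{i\to\infty}\Gamma_p\langle f_i\rangle(\Lambda B).
\]
For the right-hand side, the triangle inequality \eqref{e:def-sublinear} gives
\[
\Gamma_p\langle f_i\rangle(\Lambda B)^{1/p}\le\Gamma_p\langle f\rangle(\Lambda B)^{1/p}+\mathcal E_p(f_i-f)^{1/p}.
\]
The last term tends to zero because $f_i\to f$ in $\cF_p$. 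Hence the right-hand side converges to $C_{\rm CS}\Gamma_p\langle f\rangle(\Lambda B)$, which proves \eqref{eq:CS} for $f$.

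Finally, the choice of representative does not matter. Any other quasicontinuous representative of $f$ differs from $\tilde f$ only on a capacity-zero set, which is $\Gamma_p\langle\psi_B\rangle$-null, so the left-hand side is unchanged. The step that needs the most care is the transfer of quasi-everywhere convergence to $\Gamma_p\langle\psi_B\rangle$-a.e. convergence, since this measure may be singular with respect to $\mu$. That is exactly why $L^p$ convergence alone is insufficient and why we must use Proposition \ref{prop:properties}(iv) and (vii) rather than $\mu$-a.e. arguments.
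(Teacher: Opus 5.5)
Your proof is correct and follows the same route as the paper. You approximate by functions in $C_c(X)\cap\cF_p$, pass to a quasi-everywhere convergent subsequence via Proposition \ref{prop:properties}(vii), use (iii)--(iv) to get $\Gamma_p\langle\psi_B\rangle$-a.e. convergence to $\tilde f$, and conclude with Fatou's lemma. You also make explicit two steps the paper leaves implicit: the convergence $(f_i)_B\to f_B$, and the bound $\limsup_i \Gamma_p\langle f_i\rangle(\Lambda B)\le \Gamma_p\langle f\rangle(\Lambda B)$.
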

\begin{proof}
    Let $f\in \cF_p$ and assume by passing to the quasicontinuous representative that $f$ is quasicontinuous. 
    
    By regularity, there exists a sequence of continuous functions  $f_i\in C_c(X)\cap \cF_p$ converging to $f$ in $\cF_p$. By Proposition \ref{prop:properties}, we have after passing to a subsequence, that $f_i$ converges to $f$ quasi-everywhere. Thus, $f_i(x)\to f(x)$ holds $\Gamma_p\langle \psi_B\rangle$-a.e. The cutoff Sobolev inequality for $f$ now follows from Fatou's Lemma.
\end{proof}

\subsection{Harmonic and superharmonic functions}

Let $\Omega\subset X$ be a non-empty open set. We let $\cF_{p,\loc}(\Omega)$ to be those functions $u\in L^p(\Omega)$ s.t. for every open set $\Omega'\subset \Omega$ with compact closure there exists a $\tilde{u}\in \cF_p$ s.t. $\tilde{u}|_{\Omega'} = u$. If there are two such extensions $\tilde{u}_1,\tilde{u}_2$, then $\Gamma_p\langle \tilde{u}_1\rangle|_{\Omega'}  = \Gamma_p\langle \tilde{u}_2\rangle|_{\Omega'}$ by locality. Thus, if $A\subset \Omega$ is a compact set, we can define $\Gamma_p\langle u\rangle(A):=\Gamma_p\langle \tilde{u}\rangle(A)$ where first we choose an open set $\Omega'\subset \Omega$ with compact closure s.t. $A\subset \Omega'$, and then choose an arbitrary extension as before.  One directly sees that if $u\in \cF_{p,{\rm loc}}(\Omega)$ and $v\in \cF_{p,{\rm loc}}(\Omega)$ then $u+v\in \cF_p(\Omega)$. It is easy to verify that this construction satisfies the following properties.

\begin{lemma}\label{lem:localenergy} If $\Omega\subset X$ is open, then the following hold.
    \begin{enumerate}  
			\item \textbf{Homogeneity:} For every $f\in \mathcal{F}_{p,\loc}(\Omega)$ there exists a finite non-negative Borel measure $\Gamma_p \langle f \rangle$ on $\Omega$ and for all $\lambda \in \mathbb{R}$
			\[
			\Gamma_p\langle \lambda f\rangle = |\lambda|^p \Gamma_p\langle f\rangle.
			\]
			\item \textbf{Triangle inequality:} For every $f,g\in \mathcal{F}_{p,\loc}(\Omega)$ and every Borel set $A\subset \Omega$, 
			\begin{equation} \label{e:def-sublinear-loc}
				\Gamma_p\langle f+g\rangle(A)^{\frac{1}{p}}\le \Gamma_p\langle f\rangle(A)^{\frac{1}{p}}+\Gamma_p\langle g\rangle(A)^{\frac{1}{p}}
			\end{equation}
            \item \textbf{Addition of constant:} For all $f\in \mathcal{F}_{p,\loc}(\Omega)$ and $c\in \R$ we have $f+c\in \mathcal{F}_{p,\loc}(\Omega)$ and $\Gamma_p\langle f\rangle=\Gamma_p\langle f+c\rangle$.
			\item \textbf{Lipschitz contractivity:} For all $f\in \mathcal{F}_{p,\loc}(\Omega)$ and $g\in \Lip(\mathbb{R})$,  we have $g \circ f \in \mathcal{F}_{p,\loc}(\Omega)$ and
			\begin{equation} \label{e:def-chain-local}
				d\Gamma_p\langle g\circ f\rangle \le \LIP[g]^p d\Gamma_p\langle f\rangle.
			\end{equation}
			\item \textbf{Strong locality:} For every $f\in \mathcal{F}_{p,\loc}(\Omega)$ and any open set $A\subset \Omega$, if $f|_\Omega=c$ $\mu$-a.e., then $\Gamma_p\langle f\rangle(A)=0$. 
		\end{enumerate}
\end{lemma}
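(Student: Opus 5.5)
The plan is to verify each property by reducing to the corresponding axiom of Definition \ref{d:dir-space} on a fixed relatively compact open set. We first record well-definedness: for $u\in\cF_{p,\loc}(\Omega)$ and relatively compact open $\Omega'\subset\Omega$, any two extensions $\tilde u_1,\tilde u_2\in\cF_p$ of $u|_{\Omega'}$ give the same measure on $\Omega'$. Indeed, $\tilde u_1-\tilde u_2\in\cF_p$ vanishes a.e. on $\Omega'$, so strong locality gives $\Gamma_p\langle\tilde u_1-\tilde u_2\rangle(\Omega')=0$. The triangle inequality, applied in both directions, then yields $\Gamma_p\langle\tilde u_1\rangle(A)=\Gamma_p\langle\tilde u_2\rangle(A)$ for Borel $A\subset\Omega'$.

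We then define $\Gamma_p\langle u\rangle$ on Borel $A\subset\Omega$ by exhaustion. Choose relatively compact open sets $\Omega_1\Subset\Omega_2\Subset\cdots$ with $\bigcup_k\Omega_k=\Omega$; these exist because $X$ is locally compact and separable. Set $\Gamma_p\langle u\rangle(A)=\lim_k\Gamma_p\langle\tilde u_k\rangle(A\cap\Omega_k)$, where $\tilde u_k$ extends $u|_{\Omega_k}$. By the compatibility above the limit is monotone and defines a Borel measure. It is locally finite, and it is finite on compact subsets, which is the sense in which ``finite'' is meant here. It agrees with the definition via compact sets given earlier and does not depend on the exhaustion.

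The properties follow from this construction.
\begin{itemize}
\item \emph{Homogeneity.} $\lambda\tilde u_k$ extends $\lambda u$, so the identity holds on each $\Omega_k$ and passes to the limit.
\item \emph{Triangle inequality.} $\tilde f_k+\tilde g_k$ extends $f+g$, so the inequality holds on $A\cap\Omega_k$; let $k\to\infty$.
\item \emph{Addition of constants.} Fix a cutoff $\phi_k\in C_c(X)\cap\cF_p$ with $\phi_k=1$ on $\Omega_k$. It exists by regularity: approximate a compactly supported continuous function equal to $2$ on $\overline{\Omega_k}$ uniformly within $1/2$, then compose with the truncation $t\mapsto\min(\max(t,0),1)$, which is Lipschitz with value $0$ at $0$. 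Then $\tilde u_k+c\phi_k\in\cF_p$ extends $u+c$ on $\Omega_k$. Since $c\phi_k$ is constant on the open set $\Omega_k$, strong locality gives $\Gamma_p\langle c\phi_k\rangle(\Omega_k)=0$, and the triangle inequality in both directions gives equality of the measures on $\Omega_k$.
\item \emph{Lipschitz contractivity.} For general $g\in\Lip(\R)$, write $g=g_0+g(0)$ with $g_0(0)=0$. Then $g_0\circ\tilde u_k\in\cF_p$ extends $g_0\circ u$ with the required bound, and the constant $g(0)$ is handled by the previous item.
\item \emph{Strong locality.} If $f$ is a.e. constant on an open $A\subset\Omega$ (reading the hypothesis as $f|_A=c$), then on $A\cap\Omega_k$ the function $\tilde f_k-c\phi_k$ vanishes a.e. Strong locality, together with the constant-addition step, gives $\Gamma_p\langle\tilde f_k\rangle(A\cap\Omega_k)=0$; let $k\to\infty$.
\end{itemize}

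The only genuinely non-formal points are the construction of the cutoffs $\phi_k$, which uses regularity, and the well-definedness step. The main thing to be careful about is which sets the measure is evaluated on. Everything is done on the relatively compact sets $\Omega_k$ and then passed to the limit, since extensions in $\cF_p$ exist only on relatively compact subsets.
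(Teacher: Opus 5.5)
Your proposal is correct and follows essentially the same route as the paper. Both apply the axioms to extensions in $\cF_p$ on relatively compact open subsets, get invariance under adding constants from a regularity/Urysohn cutoff that is constant on the subset, and reduce Lipschitz contractivity to $g-g(0)$ plus a constant. Your version is more careful in two places: the exhaustion actually produces a Borel measure on all of $\Omega$ (locally finite rather than finite, as you note), and your cutoff $c\phi_k$ works for every sign of $c$, whereas the paper's $\min(g,c)$ as written needs $c\ge 0$. One caveat, shared with the paper: you tacitly read $u\in L^p(\Omega)$ in the definition of $\cF_{p,\loc}(\Omega)$ as $u\in L^p_{\loc}(\Omega)$. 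That reading is needed for $f+c\in\cF_{p,\loc}(\Omega)$ when $\mu(\Omega)=\infty$ and $c\neq 0$.
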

\begin{proof}
    The homogeneity, triangle inequalities and strong locality follow directly from the axioms, since we can apply them to the extensions of $f,g$. 

    Next, we show that $f+c\in \mathcal{F}_{p,\loc}(\Omega)$ for every $c\in \R$ with $\Gamma_p\langle f+c\rangle=\Gamma_p\langle f\rangle$. Indeed, let $\Omega'\subset \Omega$ be an arbitrary open set with compact closure and $\widetilde{f}$ the associated extension of $f$ to $\cF_p$. By regularity and Uryhsons lemma there exists a continuous function $g\in \cF_p$ with $g\geq c$ on $\Omega'$. Let $h=\min(g,c)$, and then let $\widetilde{f+c}=\widetilde{f}+h \in \cF_p$. Moreover, $\Gamma_p\langle h\rangle|_{\Omega'}=0$ by locality, so $\Gamma_p\langle f+c\rangle = \Gamma_p\langle \widetilde{f+c}\rangle|_\Omega = \Gamma_p\langle \widetilde{f}\rangle|_\Omega=\Gamma_p\langle f\rangle$.

    Now Lipschitz contractivity follows since we can apply the usual contractivity with the function $\tilde{g}(x)=g(x)-g(0)$ together with the addition of a constant-property.
\end{proof}

Let $\spt(\psi)=\overline{\{x \in X: \psi(x)>0\}}$ denote the support of $\psi$. Let $\cF_{p,0}(\Omega)$ to be the collection of those $u\in \cF_{p, {\rm loc}}(\Omega)$ for which the extension $\tilde{u}$ defined by $\tilde{u}|_\Omega = u$ and $\tilde{u}|_{X\setminus \Omega}=0$ satisfies $\tilde{u}\in \cF_p$. 

Let $\mathcal{H}(\Omega)$ be the collection of harmonic functions: i.e  those functions $u\in \cF_{p, {\rm loc}}(\Omega)$ s.t. \[\Gamma_p\langle u+\psi\rangle(\Omega \cap \spt(\psi))\geq \Gamma_p\langle u\rangle(\Omega \cap \spt(\psi))\] for every $\psi \in \cF_{p,0}(\Omega)$ with $\psi|_{X\setminus \Omega}=0$.
Let $\Omega\subset X$ be a non-empty open set and let $\mathcal{H}_+(\Omega)$ be the collection of super-harmonic functions: i.e  those functions $u\in \cF_p(\Omega)$  s.t. \[\Gamma_p\langle u+\psi\rangle(\Omega \cap \spt(\psi))\geq \Gamma_p\langle u\rangle(\Omega \cap \spt(\psi))\] for every $\psi \in \cF_p$ with $\psi|_{X\setminus \Omega}=0$, $\psi\geq 0$.

\begin{lemma}\label{lem:existenceofaminimizer} Assume that $X$ is proper.  For every $B\subset X$, there exists a function $\phi_B:X\to [0,1]$ with $\phi_B\in \cF_p$, s.t. $\phi_B|_B=1$,  $\phi_B|_{X\setminus 2B}=0$ and $\phi_B \in \mathcal{H}(2B\setminus B), \phi_B \in \mathcal{H}_+(2B)$ and $1-\phi_B\in \mathcal{H}_+(X\setminus B)$. 
\end{lemma}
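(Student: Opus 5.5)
We will obtain $\phi_B$ by the direct method, as a minimizer of the energy $\cE_p$ over the admissible class
\[
\mathcal{A}=\{\phi\in \cF_p : \phi|_B=1,\ \phi|_{X\setminus 2B}=0 \ \mu\text{-a.e.}\}.
\]
First we check that $\mathcal{A}\neq\emptyset$. Since $X$ is proper, $\overline{B}$ is compact, so by regularity and Urysohn's lemma there is a continuous $g\in\cF_p\cap C_c(X)$ with $g\geq 1$ on $\overline B$ and $\spt(g)\subset 2B$. (If needed, we shrink the support by composing with a Lipschitz function of a distance-type continuous function, which regularity makes available.) Then $\min(g,1)\in\mathcal A$ by Lipschitz contractivity.

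Next we take a minimizing sequence $\phi_i\in\mathcal A$. Replacing $\phi_i$ by $\max(\min(\phi_i,1),0)$ keeps it in $\mathcal A$ and does not increase the energy, by Lipschitz contractivity. Hence $0\le\phi_i\le 1$ and $\phi_i$ vanishes outside $2B$, which has finite measure since $\mu$ is Radon and $X$ is proper. So $\phi_i$ is bounded in $\cF_p$. The space $\cF_p$ need not be reflexive, so we would use convexity instead: the triangle inequality makes $\cE_p^{1/p}$ a seminorm, and convex combinations of minimizing sequences are again minimizing. By Mazur's lemma applied to a weakly convergent subsequence in $L^p$ (which is reflexive for $1<p<\infty$), we obtain convex combinations converging in $L^p$ to some $\phi_B$. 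These remain in $\mathcal A$, and their energies remain bounded. By weak lower semicontinuity, $\phi_B\in\cF_p$ and $\cE_p(\phi_B)\le\inf_{\mathcal A}\cE_p$. The constraints pass to the $L^p$ limit a.e., so $\phi_B\in\mathcal A$ is a minimizer with values in $[0,1]$.

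For the harmonicity statements, let $\psi\in\cF_{p,0}(2B\setminus \overline{B})$, extended by zero. Then $\phi_B+\psi\in\mathcal A$, so $\cE_p(\phi_B+\psi)\ge\cE_p(\phi_B)$. By strong locality, and by Proposition \ref{prop:properties}(v) applied to quasicontinuous representatives, the energy measures of $\phi_B+\psi$ and $\phi_B$ agree off $\spt(\psi)$. Subtracting the common part localizes the inequality to $\Omega\cap\spt(\psi)$ with $\Omega=2B\setminus B$. This localization is the main technical point. The issue is the boundary $\partial\spt(\psi)$ and the set where $\psi=0$ inside the support, where one must argue via quasicontinuous representatives that $\Gamma_p\langle\phi_B+\psi\rangle=\Gamma_p\langle\phi_B\rangle$ on $\{\tilde\psi=0\}$. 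Some care is also needed about whether $\Omega$ should be $2B\setminus\overline B$.

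For $\phi_B\in\mathcal H_+(2B)$, let $\psi\ge0$ vanish outside $2B$. We compare with $\min(\phi_B+\psi,1)\in\mathcal A$. Its energy is at most that of $\phi_B+\psi$, since truncation is 1-Lipschitz contractive. Minimality then gives
\[
\cE_p(\phi_B)\le\cE_p(\min(\phi_B+\psi,1))\le \cE_p(\phi_B+\psi),
\]
and we localize as before. Symmetrically, for $1-\phi_B\in\mathcal H_+(X\setminus B)$, take $\psi\ge0$ vanishing on $B$. Then $1-(1-\phi_B+\psi)=\phi_B-\psi$, and we compare with $\max(\phi_B-\psi,0)$, which lies in $\mathcal A$ because it is $0$ outside $2B$ and $1$ on $B$. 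We use $\Gamma_p\langle 1-u\rangle=\Gamma_p\langle u\rangle$ from Lemma \ref{lem:localenergy}, working in $\cF_{p,\loc}(X\setminus B)$ since $1-\phi_B\notin\cF_p$ when $X$ is unbounded.
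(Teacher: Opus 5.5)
Your proposal follows the paper's proof essentially step for step: nonemptiness of the admissible class via regularity and a Lipschitz truncation, a minimizing sequence truncated to $[0,1]$, Mazur's lemma in $L^p$ combined with weak lower semicontinuity, and then comparison of $\phi_B$ with $\phi_B+\psi$, $\min(\phi_B+\psi,1)$ and $\max(\phi_B-\psi,0)$, using $\Gamma_p\langle 1-u\rangle=\Gamma_p\langle u\rangle$ from Lemma \ref{lem:localenergy} for the last claim. The extra difficulty you anticipate on $\{\tilde\psi=0\}\cap\spt(\psi)$ does not arise: the definitions of $\mathcal{H}$ and $\mathcal{H}_+$ only require the inequality on $\Omega\cap\spt(\psi)$, and the two energy measures agree on the open set $X\setminus\spt(\psi)$ by strong locality and the triangle inequality alone.
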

\begin{proof} Let $\mathcal{G}=\{\phi\in \cF_p : \phi:X\to [0,1], \ \phi|_B=1,  \ \phi|_{X\setminus 2B}=0\}$. First, we argue that this collection is non-empty. By Uryhson's lemma, there exists a continuous function $\psi\in C_c(X)$ with $\phi:X\to [0,1], \phi|_B=1,  \phi|_{X\setminus 2B}=0$. By regularity, there exists a function $\xi\in C_c(X)\cap \cF_p$ such that $\|\xi-\psi\|_\infty\leq 1/4$. Let $\phi = \frac{4}{3}\min(\max(\xi-1/4, 0),3/4)$. It is direct to verify with Lipchitz contractivity that $\phi\in \mathcal{G}$.

Let $E = \inf\{\cE_p(\phi) : \phi \in \mathcal{G}\}$ and let $\phi_i\in \mathcal{G}$, for $i\in \N$,  be functions s.t. $\cE_p(\phi_i)\searrow E$. We have $\phi_i\in L^p(X)$ and $\|\phi_i\|_{L^p}\leq \mu(2B)^\frac{1}{p}.$ Thus, by Mazur's lemma, we can find convex combinations $\psi_i = \sum_{k=i}^{n_i} \alpha_{i,k} \phi_i$ s.t. $\psi_i$ converges in $L^p(X)$, for some $n_i\geq i$ and $\alpha_{i,k}\in [0,1]$ with $\sum_{k=i}^{n_i} \alpha_{i,k}=1$. By the triangle inequality and convexity, we have $\psi_i\in \mathcal{G}$ and $\cE_p(\psi_i)\searrow E$. 

The $L^p(X)$-limit $u$ of $\psi_i$ satisfies $u:X\to [0,1], u|_B=1$ and $u|_{X\setminus 2B}=0$. Further, by weak lower semicontinuity, we have $\phi\in \cF$ and thus $\phi \in \mathcal{G}$ with $\cE_p(u)\leq E$. Since $E$ was the infimum, we must have $\cE_p(u)=E$. Let $\phi_B=\max(\min(u,1),0)$. It is direct to see that $\phi_B\in \mathcal{G}$ and $\cE_p(\phi_B)\leq \cE_p(u)$. Thus, $\cE_p(\phi)=E$ is also a minimizer.

Next, if $\psi\in \cF_p$ satisfies $\psi|_{B\cup X \setminus 2B} = 0$, then $\phi + \psi \in \mathcal{G}$, and thus 
\[
\Gamma_p\langle \phi+\psi\rangle(\Omega) = \cE_p(\phi+\psi)\geq E = \cE_p(\phi)=\Gamma_p\langle \phi\rangle(\Omega),
\]
where the first and third follow from locality, since $\phi+\psi=\phi=1$ on $B$ and $\phi+\psi=\phi=0$ on $X\setminus 2B$. Since $\phi+\psi = \phi$ outside the closed set $\spt(\psi)$, we get by locality that 
\[
\Gamma_p\langle \phi+\psi\rangle(\Omega \cap \spt(\psi)) \geq \Gamma_p\langle \phi\rangle(\Omega \cap \spt(\psi))
\]
Thus $\phi_B\in \cH(2B\setminus B)$.

Next, if  $\psi\in \cF_p$ satisfies $\psi|_{X \setminus 2B} = 0$ and $\psi\geq 0$, then $\min(\phi + \psi,1) \in \mathcal{G}$, and thus 
\[
\Gamma_p\langle \phi+\psi\rangle(2B) \geq \cE_p(\min(\phi + \psi,1))\geq E=\cE_p(\phi)=\Gamma_p\langle \phi\rangle(2B).
\]
Thus, by the same locality argument as above, $\phi \in \cH_+(2B)$.

Finally, $1-\phi_B\in \cF_{p, \rm loc}(X\setminus B)$ by Lemma \ref{lem:localenergy}. If $\psi\in \cF_p$ satisfies $\psi|_{B} = 0$ and $\psi\geq 0$, then $\max(\phi - \psi,0) \in \mathcal{G}$ and
\[
\Gamma_p\langle \phi-\psi\rangle(2B\setminus B) \geq \cE_p(\max(\phi - \psi,0))\geq \cE_p(\phi)=\Gamma_p\langle \phi\rangle(X\setminus B)=\Gamma_p\langle 1-\phi\rangle(X\setminus B).
\]
By sub-additivity and locality and Lemma \ref{lem:localenergy}, we get
\[
\Gamma_p\langle 1-\phi+\psi\rangle(2B\setminus B \cap \spt(\psi))=\Gamma_p\langle 1-\phi\rangle(2B\setminus B \cap \spt(\psi)).
\]
Thus, $1-\psi_B \in \cH_+(X\setminus B)$
\end{proof}

%\begin{lemma}\label{lem:sheafprop}
%    Let $X=A\cup B$ for some open sets $A,B\subset X$, and assume that $f,g\in \cF_p$. If $h:X\to \R$ satisfies $h|_A=f$ and $h|_B=g$, then $h\in \cF_p$. 
%\end{lemma}
%\begin{proof}
% If needed
%\end{proof}

\subsection{Poincar\'e inequality and capacity bounds}
Following \cite{barlow2020stability}, we call $\Psi:X\times [0,\infty)\to (0,\infty)$ a regular scale function if $\Psi(x,0)=0$ and there exists a constant $C_{\rm \Psi}>0$ and exponents $\beta_{\pm}>0$ s.t. for all $x,y\in X$ and all $0<s\leq r \leq 2\diam(X)$, $R=d(x,y)$ we have
\[
C_{\rm \Psi}^{-1} \left(\frac{s \vee R}{r}\right)^{\beta_{+}} \left(\frac{s}{r \vee R}\right)^{\beta_{-}} \leq \frac{\Psi(x,r)}{\Psi(y,s)}\leq C_{\rm \Psi}\left(\frac{r}{r \vee R}\right)^{\beta_{-}}\left(\frac{r \vee R}{s}\right)^{\beta_{+}}.
\]
Where convenient, we will abbreviate $\Psi(B):=\Psi(x_B,\rad(B))$.

Often, we will assume one of two properties:

\begin{enumerate}
    \item[$(\cCap_p(\Psi))$] Capacity upper bound: For every ball $B\subset X$ with $\rad(B)\leq 2\diam(X)$ there exists a $\psi_B \in C(X)\cap \cF_p$ with $\psi_B|B=1$ and $\psi_B|_{X\setminus 2B}=0$ and
    \[
    \Gamma_p\langle \psi_B \rangle(X)=\Gamma_p\langle \psi_B \rangle(2B)\leq C_{\rm cap}\frac{\mu(B)}{\Psi(B)} 
    \]
    \item[$(\PI_p(\Psi))$] Poincar\'e inequality: For every $B=B(x,r)\subset X$ with $r\leq 2\diam(X)$ and every $f\in \cF_p$
    \[
    \fint_B |f - f_B|^p d\mu \leq C_{\rm PI}\frac{\Psi(B)}{\mu(B)}\Gamma_p\langle f\rangle(\Lambda B). 
    \]
\end{enumerate}
Without loss of generality, we may assume $\Lambda \geq 8$ by possibly increasing the constant.

\begin{remark}\label{rmk:local} The Poincar\'e inequality is expressed as a condition for functions $f\in \cF_p$. It also holds for functions $f\in \cF_{p, \loc}(2\Lambda B)$ when $\mu$ is doubling by the following short argument. By doubling,  $\overline{\Lambda B}$ is compact. If $f\in \cF_p(2\Lambda B)$, then there is a $\tilde{f}\in \cF_p$ s.t. $\tilde{f}|_{\Lambda B}=f|_{\Lambda B}.$ Applying the Poincar\'e inequality to $\tilde{f}$ yields the same for $f$. 
\end{remark}

In proving the Poincar\'e inequality, the following Lemma will be frequently useful.

\begin{lemma} \label{lem:doublingballs} Assume that $\mu$ is $C_D-$doubling. For every $L\geq 1$ there exists a constant $C_L=C_L(C_D)$ s.t. if $B' \subset LB$ and $\rad(B')\geq 1/L \rad(B)$, then
\[
|f_B' -f_B|^p \leq C_L \fint_{LB} |f-f_B|^p d\mu \leq 2^p C_L^2 \fint_{LB} |f-f_{LB}|^p d\mu.
\]
\end{lemma}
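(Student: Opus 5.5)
The plan is to compare both averages to the reference average $f_B$ and control the difference by Jensen's inequality together with doubling. The first inequality will come from the ball $B'$ itself. The second will come from replacing $f_B$ by $f_{LB}$ via the triangle inequality.

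\emph{Step 1 (comparing $\mu(B')$ with $\mu(LB)$).} Since $B'\subset LB$ and $\rad(B')\geq \rad(B)/L$, the center $x_{B'}$ lies in $LB$. Hence $LB\subset B(x_{B'}, 2L\rad(B))\subset B(x_{B'}, 2L^2\rad(B'))$. Iterating doubling $\lceil \log_2(2L^2)\rceil$ times gives $\mu(LB)\leq C_D^{\lceil \log_2(2L^2)\rceil}\mu(B')$. In the same way $\mu(LB)\leq C_D^{\lceil\log_2 L\rceil}\mu(B)$. Let $C_L$ be the larger of these two constants, which depends only on $C_D$ and $L$.

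\emph{Step 2 (first inequality).} By Jensen's inequality,
\[
|f_{B'}-f_B|^p=\Big|\fint_{B'}(f-f_B)\,d\mu\Big|^p\leq \fint_{B'}|f-f_B|^p d\mu.
\]
Using $B'\subset LB$ and Step 1,
\[
\fint_{B'}|f-f_B|^p d\mu\leq \frac{\mu(LB)}{\mu(B')}\fint_{LB}|f-f_B|^pd\mu\leq C_L\fint_{LB}|f-f_B|^pd\mu.
\]

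\emph{Step 3 (second inequality).} Write $f-f_B=(f-f_{LB})+(f_{LB}-f_B)$ and use $|a+b|^p\leq 2^{p-1}(|a|^p+|b|^p)$. The constant term satisfies
\[
|f_{LB}-f_B|^p\leq \fint_B|f-f_{LB}|^pd\mu\leq C_L\fint_{LB}|f-f_{LB}|^pd\mu,
\]
by Jensen's inequality and Step 1 applied to $B\subset LB$. Combining these,
\[
\fint_{LB}|f-f_B|^p d\mu\leq 2^{p-1}(1+C_L)\fint_{LB}|f-f_{LB}|^pd\mu\leq 2^pC_L\fint_{LB}|f-f_{LB}|^pd\mu,
\]
where the last step uses $C_L\geq 1$. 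Multiplying by $C_L$ gives the stated bound with constant $2^pC_L^2$.

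No step is a real obstacle. The only point needing care is Step 1: one must check that $\rad(B')\geq \rad(B)/L$ together with $B'\subset LB$ gives a uniform doubling comparison of $\mu(B')$ with $\mu(LB)$, even though the centers differ. This is settled by placing $LB$ inside the ball $B(x_{B'}, 2L^2\rad(B'))$ centered at $x_{B'}$.
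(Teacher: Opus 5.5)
Your proof is correct and follows the paper's route: Jensen's inequality on $B'$, then enlarging the domain to $LB$ with doubling, and then the triangle inequality $|f-f_B|\le |f-f_{LB}|+|f_{LB}-f_B|$ with a second application of Jensen. You also write out two things the paper leaves implicit: the doubling comparison of $\mu(LB)$ with $\mu(B')$ via $LB\subset B(x_{B'},2L^2\rad(B'))$, and the bookkeeping that produces the constant $2^pC_L^2$.
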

\begin{proof} By Jensen's inequality
\[
|f_B' -f_B|^p \leq \frac{1}{\mu(B')}\int_{B'} |f-f_B|^p d\mu.
\]
The first inequality then follows by increasing the domain and using doubling. The second inequality follows by
\[
|f-f_B|\leq |f-f_{LB}| + |f_B-f_{LB}|,
\]
and again applying Jensen's inequality.
\end{proof} 

As noted in the introduction, there is a different form of the cutoff Sobolev inequality that has appeared in the literature. We say that a $p$-Dirichlet structure satisfies the classical cutoff Sobolev inequality ${\rm CS}_{\rm classical}(\Psi)$ if the following holds: there exists a constants $C,\Lambda \geq 1$ s.t. for every $B(x,r)$ there exists a $\phi_B \in \cF_p$ with $\phi_B|B=1$ and $\phi_B|_{X\setminus 2B}=0$ s.t. for every $f\in \cF_p$ and every quasicontinuous representative $\tilde{f}$ of $f$ we have
\[
\fint_{2B} |\tilde{f}|^p d\Gamma_p\langle \phi_B\rangle \leq C \Gamma_p\langle f\rangle(\Lambda B) + C\frac{1}{\Psi(B)}\int_{2B} f^p d\mu. 
\]
In order to use the results of \cite{barlow2006stability,GHL2,barlowanders,MN}, we will quickly prove the equivalence between this and the cutoff Sobolev inequality that we use.
\begin{lemma}\label{lem:equivalence}
    Assume that the $p$-Dirichlet structure stucture satisfies volume doubling, $\cCap_p(\Psi)$ and $\PI_p(\Psi)$. Then 
    \[{\rm CS}_{\rm classical}(\Psi) \Longleftrightarrow {\rm CS}_p\]
\end{lemma}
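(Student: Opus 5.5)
We prove the two implications separately, both by elementary manipulations. The tools are the Poincar\'e inequality $\PI_p(\Psi)$, the capacity bound $\cCap_p(\Psi)$ (which gives $\Gamma_p\langle\phi_B\rangle(2B)\lesssim \mu(B)/\Psi(B)$ for the cutoff in either inequality), and doubling. (We read the left-hand side of ${\rm CS}_{\rm classical}(\Psi)$ as $\int_{2B}|\tilde f|^p\,d\Gamma_p\langle\phi_B\rangle$.) We use the same cutoff function $\phi_B$ in both directions.

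\textbf{${\rm CS}_p\Rightarrow{\rm CS}_{\rm classical}(\Psi)$.} Write $|\tilde f|^p\le 2^{p-1}|\tilde f-f_B|^p+2^{p-1}|f_B|^p$ and integrate against $\Gamma_p\langle\psi_B\rangle$ over $2B$. The first term is bounded by $C_{\rm CS}\Gamma_p\langle f\rangle(\Lambda B)$. For the constant term, Jensen gives $|f_B|^p\le \fint_B|f|^p\,d\mu$. We would like $\Gamma_p\langle \psi_B\rangle(2B)\lesssim \mu(B)/\Psi(B)$. The cutoff $\psi_B$ supplied by ${\rm CS}_p$ need not have this bound, so we modify it. Apply ${\rm CS}_p$ to the cutoff $\phi$ of $\cCap_p(\Psi)$ (the harmonic minimizer of Lemma \ref{lem:existenceofaminimizer}, or any capacity-bound cutoff), viewing $\phi$ as the test function $f$. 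Alternatively, and more cleanly, we replace $\psi_B$ by $\min(\psi_B,\phi)$ or by $\psi_B\phi$. Then Proposition \ref{prop:properties}(viii) bounds $\Gamma_p\langle\psi_B\phi\rangle\lesssim \Gamma_p\langle\psi_B\rangle+\Gamma_p\langle\phi\rangle$, and the $\Gamma_p\langle\phi\rangle$ part has total mass $\lesssim\mu(B)/\Psi(B)$. It remains to bound $\Gamma_p\langle\psi_B\rangle$ with weight $|f_B|^p$. For this we test ${\rm CS}_p$ with $f=\phi_{\Lambda' B}$, a capacity cutoff for a larger ball, which equals $1$ on $4B$ and therefore on the whole support region. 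The left side then becomes $|1-(\phi_{\Lambda'B})_B|^p\Gamma_p\langle\psi_B\rangle(2B)$. This needs the average to be $\ne 1$, which fails. Hence we instead use a function $f$ equal to $1$ on $2B$ but with average over $B$ at most $1/2$. Doubling alone does not provide this, so I would not pursue it. The cleanest route is the truncated product $\min(\psi_B,\phi_B)$ combined with the first splitting, which controls the constant term by $\Gamma_p\langle\phi_B\rangle(2B)|f_B|^p\lesssim \Psi(B)^{-1}\int_{2B}|f|^p\,d\mu$. The piece $|\tilde f-f_B|^p\,d\Gamma_p\langle\min(\psi_B,\phi_B)\rangle$ is then handled by ${\rm CS}_p$ for both $\psi_B$ and $\phi_B$. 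This in turn requires ${\rm CS}_p$ for the capacity cutoff itself, which is what Theorem \ref{thm:mainthm} provides. For this lemma, I would therefore state that the cutoff may be chosen to satisfy the capacity bound, since both forms in the literature allow this.

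\textbf{${\rm CS}_{\rm classical}(\Psi)\Rightarrow{\rm CS}_p$.} Apply ${\rm CS}_{\rm classical}$ to $g=(f-f_B)\,\eta$, where $\eta$ is a capacity cutoff that equals $1$ on $2B$ and vanishes outside $4B$. Then $\tilde g=\tilde f-f_B$ on $2B$, so the left side of ${\rm CS}_p$ is at most $C\Gamma_p\langle g\rangle(\Lambda B)+C\Psi(B)^{-1}\int_{2B}|f-f_B|^p\,d\mu$. The second term is $\lesssim\Gamma_p\langle f\rangle(\Lambda B)$ by $\PI_p(\Psi)$ together with Lemma \ref{lem:doublingballs} and doubling, which pass from $B$ to $2B$ and enlarge $\Lambda$. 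For the first term there is an issue: $\Gamma_p\langle g\rangle$ involves $\int|\tilde f-f_B|^p\,d\Gamma_p\langle\eta\rangle$, which is circular. To avoid this, we restrict to $\Gamma_p\langle g\rangle(2B)=\Gamma_p\langle f\rangle(2B)$ by locality (Proposition \ref{prop:properties}(v)). This is legitimate because the proof of ${\rm CS}_{\rm classical}$ only requires the energy on a neighborhood of $2B$; alternatively we apply ${\rm CS}_{\rm classical}$ directly to $f-f_B$, extended as a local function via Lemma \ref{lem:localenergy} and Remark \ref{rmk:local}, after which the constant shift costs nothing in energy.

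The main obstacle in both directions is the same: the singular measure $\Gamma_p\langle\phi_B\rangle$ must only ever be paired with constants, whose total mass is controlled by $\cCap_p(\Psi)$, or with quantities already controlled by the assumed inequality. Handling the localization of $f-f_B$ without reintroducing a cutoff-Sobolev term is the delicate point.
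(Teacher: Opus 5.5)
The gap is in the direction ${\rm CS}_p\Rightarrow{\rm CS}_{\rm classical}(\Psi)$. Your splitting $|\tilde f|^p\le 2^{p-1}|\tilde f-f_B|^p+2^{p-1}|f_B|^p$ is exactly the paper's argument. It closes only if the cutoff $\psi_B$ supplied by ${\rm CS}_p$ satisfies $\Gamma_p\langle\psi_B\rangle(2B)\lesssim\mu(B)/\Psi(B)$. The paper uses this bound at that step and attributes it to $\cCap_p(\Psi)$. You correctly notice that the definition of ${\rm CS}_p$ does not include it, but you never establish it.

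Your workarounds do not supply it. The replacements $\min(\psi_B,\phi_B)$ and $\psi_B\phi_B$ fail because their energy measures still contain $\Gamma_p\langle\psi_B\rangle$: on $\{\psi_B<\phi_B\}$ in the first case, and with weight $\phi_B^p$ in the second. So the constant term $|f_B|^p\,\Gamma_p\langle\cdot\rangle(2B)$ remains uncontrolled. Invoking Theorem~\ref{thm:mainthm}, or building the capacity bound into the cutoff, changes the statement rather than proving it.

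The idea you are missing is that $\Gamma_p\langle\psi_B\rangle(B)=0$ by strong locality, since $\psi_B\equiv 1$ on the open ball $B$. Hence a test function only needs to equal $1$ on $2B\setminus B$, not on all of $2B$, and this removes the obstruction you hit. Concretely, take $\Lambda'\ge\max\{\Lambda,2\}$. Let $\phi_{\Lambda' B}$ and $\phi_{\frac{1}{2}B}$ be the continuous cutoffs from $\cCap_p(\Psi)$, truncated to $[0,1]$, and test ${\rm CS}_p$ with $f=\phi_{\Lambda' B}-\phi_{\frac{1}{2}B}$. Then:
\begin{itemize}
\item $f=1$ on $2B\setminus B$;
\item $1-f_B=(\phi_{\frac{1}{2}B})_B\ge \mu(\tfrac{1}{2}B)/\mu(B)\ge C_D^{-1}$;
\item $\Gamma_p\langle f\rangle(\Lambda B)\le\Gamma_p\langle\phi_{\frac{1}{2}B}\rangle(X)$, by the triangle inequality and strong locality.
\end{itemize}
Therefore
\[
C_D^{-p}\,\Gamma_p\langle\psi_B\rangle(2B)\le\int_{2B}|\tilde f-f_B|^p\,d\Gamma_p\langle\psi_B\rangle\le C_{\rm CS}\,C_{\rm cap}\,\frac{\mu(\tfrac{1}{2}B)}{\Psi(\tfrac{1}{2}B)}\lesssim\frac{\mu(B)}{\Psi(B)},
\]
using doubling and the regularity of $\Psi$. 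After this, your original splitting with $\psi_B$ itself finishes the proof.

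In the other direction, your first attempt with $(f-f_B)\eta$, where $\eta=1$ only on $2B$, is indeed circular. The justification that only the energy near $2B$ matters is not available, because ${\rm CS}_{\rm classical}(\Psi)$ is a hypothesis whose right-hand side lives on $\Lambda B$. Your alternative is the paper's argument. Made concrete, you should subtract rather than multiply, using a cutoff equal to $1$ on all of $\Lambda B$: $g=f-f_B\phi_{\Lambda' B}\in\cF_p$ equals $f-f_B$ on $\Lambda B\supseteq 2B$, and $\Gamma_p\langle g\rangle(\Lambda B)=\Gamma_p\langle f\rangle(\Lambda B)$. No cross term with a singular energy measure appears. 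Then $\PI_p(\Psi)$ on $2B$ together with Lemma~\ref{lem:doublingballs} controls $\Psi(B)^{-1}\int_{2B}|f-f_B|^p\,d\mu$ after enlarging $\Lambda$, as you say.
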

\begin{proof}
    First, assume ${\rm CS}_{\rm classical}(\Psi)$. Setting $g=f-f_B$, then ${\rm CS}_{\rm classical}(\Psi)$ and $\PI_p(\Psi)$ imply:
    \begin{align*}
        \fint_{2B} |\tilde{f}-f_B|^p d\Gamma_p\langle \phi_B\rangle &\leq C \Gamma_p\langle f\rangle(\Lambda B) + C\frac{1}{\Psi(B)}\int_{2B} |f-f_B|^p d\mu \\
        &\leq C \Gamma_p\langle f\rangle(\Lambda B) + CC_{\rm PI}\Gamma_p\langle f\rangle(\Lambda B) \\
        &\lesssim \Gamma_p\langle f\rangle(\Lambda B).
    \end{align*}
    Here, we applied the Poincar\'e inequality via Remark \ref{rmk:local} to $\tilde{f}-f_B$ for which we have $\tilde{f}-f_B\in \cF_{p,\loc}(X)$ by Lemma \ref{lem:localenergy}.

    Next, assume ${\rm CS}_p$. Then $\cCap_p(\Psi)$ implies:
    \begin{align*}
        \fint_{2B} |\tilde{f}|^p d\Gamma_p\langle \phi_B\rangle &\lesssim \fint_{2B} |\tilde{f}-f_B|^p d\Gamma_p\langle \phi_B\rangle  + \Gamma_p\langle \phi_B\rangle(2B) |f_B|^p \\
        &\lesssim 
        C_{\rm CS} \Gamma_p\langle f\rangle(\Lambda B) + \frac{1}{\Psi(B)}\int_{2B} |f|^p d\mu.
    \end{align*}
\end{proof}

There are also many other forms of the cutoff Sobolev inequality, see the introduction and \cite{barlowanders,GHL, GHL2,rikuanttila,yangenergy} for some versions and their equivalences. 

\section{Proof of Main Theorems}\label{sec:proofofmain}

\subsection{Proof of Theorem \ref{thm:mainthm}}
 
The key estimate is the following log-Caccioppoli-type inequality. This terminology comes from the fact that in the classical case, up to a constant, it can be shown by an application of a log-Caccioppoli inequality. 

\begin{lemma}\label{lem:LogCaccioppoli}  
    If $u\in \mathcal{H}_+(\Omega)$ is a quasicontinuous superharmonic function with $\Gamma_p\langle u\rangle(\Omega)<\infty$ in $\Omega$ and $0\leq u \leq 1$, and if $h\in \cF_p$ is quasicontinous, $h|_{X\setminus \Omega}\leq 0$ and $A\subset \Omega$ are such that $h|_A \geq 1$, then
    \[
    \Gamma_p\langle u\rangle(A) \leq \Gamma_p\langle h\rangle(\Omega).
    \]
\end{lemma}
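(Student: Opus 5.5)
We will prove the lemma with Maz'ya's truncation trick, testing the superharmonicity of $u$ against a function built from $h$ and $u$. Since $\Gamma_p\langle \cdot\rangle$ is invariant under $t\mapsto 1-t$, we may replace $h$ by its truncation $\min(\max(h,0),1)$. By Lipschitz contractivity this does not increase $\Gamma_p\langle h\rangle$, and it keeps $h\ge 1$ on $A$ and $h\le 0$ off $\Omega$. So assume $0\le h\le 1$, $h=1$ on $A$ and $h=0$ on $X\setminus\Omega$.

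The test function is
\[
\psi=\max(u,h)-u=(h-u)_+\ \ge 0 .
\]
It lies in $\cF_p$ with $\psi|_{X\setminus\Omega}=0$, because there $h=0\le u$. (If $u$ is only locally in $\cF_p$, we localize via Lemma \ref{lem:localenergy}, using that $\psi$ vanishes off $\Omega$; this is exactly where the finiteness of $\Gamma_p\langle u\rangle(\Omega)$ is used.) Superharmonicity then gives
\[
\Gamma_p\langle \max(u,h)\rangle(\Omega\cap\spt\psi)\ge \Gamma_p\langle u\rangle(\Omega\cap\spt\psi).
\]
Outside $\spt\psi$ we have $\max(u,h)=u$ on an open set, so by locality (Proposition \ref{prop:properties}(v)) the two energies agree there. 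Hence
\[
\Gamma_p\langle u\rangle(\Omega)\le \Gamma_p\langle \max(u,h)\rangle(\Omega).
\]

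Next we split $\Omega$ into $\{h>u\}$ and $\{h\le u\}$, using quasicontinuous representatives. Proposition \ref{prop:properties}(v) together with the lattice property $\Gamma_p\langle\max(u,h)\rangle\le \Gamma_p\langle u\rangle|_{\{u\ge h\}}+\Gamma_p\langle h\rangle|_{\{h>u\}}$ gives
\[
\Gamma_p\langle u\rangle(\{h>u\})\le \Gamma_p\langle h\rangle(\{h>u\}).
\]
The lattice property should follow from (v) applied on $\{u>h\}$ and $\{h>u\}$, and on $\{u=h\}$ from the fact that energy measures coincide where the functions coincide.

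This does not yet cover $A$, because on $A$ we have $h=1\ge u$ and possibly $u=1$ there. We handle this by replacing $h$ with $h_\epsilon=(1+\epsilon)h$. Then $h_\epsilon>1\ge u$ on $A$, so $A\subset\{h_\epsilon>u\}$, and $h_\epsilon$ still vanishes off $\Omega$. The previous step gives
\[
\Gamma_p\langle u\rangle(A)\le(1+\epsilon)^p\Gamma_p\langle h\rangle(\Omega),
\]
and letting $\epsilon\to0$ proves the lemma.

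The main obstacle is justifying the restriction step on the fine sets $\{h>u\}$ and $\{u\ge h\}$. These sets are only quasi-open, not open, so strong locality for open sets does not apply directly. We will rely on Proposition \ref{prop:properties}(v) for quasicontinuous functions agreeing on a measurable set: $\max(u,h)=h$ on $\{h\ge u\}$ and $\max(u,h)=u$ on $\{u\ge h\}$. This yields
\[
\Gamma_p\langle\max(u,h)\rangle(\Omega)=\Gamma_p\langle h\rangle(\{h>u\})+\Gamma_p\langle u\rangle(\{u\ge h\}\cap\Omega).
\]
Subtracting the finite quantity $\Gamma_p\langle u\rangle(\{u\ge h\}\cap\Omega)$ from both sides of the superharmonicity inequality then gives exactly $\Gamma_p\langle u\rangle(\{h>u\})\le\Gamma_p\langle h\rangle(\{h>u\})$.
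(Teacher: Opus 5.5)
Your argument is correct and is essentially the paper's proof. It uses the same Maz'ya-type comparison of $u$ with $\max(u,h)$ via the test function $(h-u)_+$; after your truncation of $h$ to $[0,1]$, $\max(u,h)$ is exactly the paper's $\tilde{u}=\min(\max(u,h),1)$. It then applies superharmonicity, locality off $\spt\psi$, and Proposition \ref{prop:properties}(v) on the sets where the functions agree.

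The only difference is the bookkeeping on $A$. The paper uses $\tilde{u}\equiv 1$ on $A$ to get $\Gamma_p\langle \tilde{u}\rangle(A)=0$ and cancels $\Gamma_p\langle u\rangle(\Omega\setminus A)$. You instead cancel $\Gamma_p\langle u\rangle(\{u\ge h\}\cap\Omega)$, which is why you need the $(1+\epsilon)$ rescaling. You could avoid it by splitting into $\{h\ge u\}$ and $\{u>h\}$.
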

\begin{proof}

    Let $\tilde{u} = \min(\max(u,h),1)$, then $\tilde{u}-u\in \cF_{p,0}(\Omega)$ and $\tilde{u}-u\geq 0$ and $\spt(\tilde{u}-u)\subset \spt(h)$. 
    \begin{equation*}
    \Gamma_p\langle \tilde{u}\rangle(\Omega \cap \spt(\tilde{u}-u))\geq  \Gamma_p\langle u\rangle(\Omega \cap \spt(\tilde{u}-u)).
     \end{equation*}
     Since by locality $\Gamma_p\langle u\rangle(\Omega \setminus \spt(\tilde{u}-u))=\Gamma_p\langle \tilde{u}\rangle(\Omega \setminus \spt(\tilde{u}-u))$, we get
    \begin{equation} \label{eq:harmonicityofu}
    \Gamma_p\langle \tilde{u}\rangle(\Omega)\geq  \Gamma_p\langle u\rangle(\Omega).
     \end{equation}

    Additionally, by Proposition \ref{prop:properties}, $\tilde{u}|_A=1$ and quasicontinuity, we get 
    \[
    \Gamma_p\langle \tilde{u}\rangle(A)=0.
    \]
    Now,
    \[
    \Gamma_p\langle \tilde{u}\rangle(\Omega)\leq \Gamma_p\langle u\rangle(\Omega\setminus A)+ \Gamma_p\langle h\rangle(\Omega\setminus A),
    \]
    and
    \[
    \Gamma_p\langle u\rangle(\Omega)=\Gamma_p\langle u\rangle(\Omega \setminus A)+\Gamma_p\langle u\rangle(A).
    \]
    Combining these with \eqref{eq:harmonicityofu}, we get
    \[
    \Gamma_p\langle u\rangle(A)\leq \Gamma_p\langle h\rangle(\Omega).
    \]
\end{proof}

This inequality suffices to prove the cutoff Sobolev inequality for functions $f$ that vanish outside $2B$ or in the ball $B$. In order to reduce to this case, we need to be able to blend functions together, where we use Lemma \ref{lem:whitneyblending}. We now show how a more general version of the main theorem follows from this with a fairly short proof. Notice that Theorem \ref{thm:mainthm} follows from this version by choosing $\Psi(x,r)=r^\beta$. A reader may wish to compare the proof to the proofs of \cite[Lemma 3.3  and Theorem 3.5]{kinnunenkorte}.

\begin{theorem}\label{thm:mainthm-sharper} Assume that $(X,d,\mu,\cF_p,\Gamma_p)$ is a regular local $p$-Dirichlet structure, with $\mu$ doubling, and satisfying $\cCap_p(\Psi)$ and $\PI_p(\Psi)$. Then, $(X,d,\mu, \cF_p,\Gamma_p)$ satisfies the $p$-cutoff Sobolev inequality ${\rm CS}_p$.  
\end{theorem}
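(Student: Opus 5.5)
We take $\psi_B=\phi_B$, the capacity minimizer of Lemma \ref{lem:existenceofaminimizer}. By minimality and $\cCap_p(\Psi)$, $\cE_p(\phi_B)\leq C_{\rm cap}\mu(B)/\Psi(B)$. Lemma \ref{lem:reductiontocont} reduces the claim to $f\in C_c(X)\cap\cF_p$. Subtracting $f_B$ and working in $\cF_{p,\loc}$ via Lemma \ref{lem:localenergy}, we may assume $f_B=0$. Since $\Gamma_p\langle\phi_B\rangle$ does not charge $B$ or $X\setminus\overline{2B}$, it suffices to bound $\int_{2B}|f|^pd\Gamma_p\langle\phi_B\rangle$. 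Fix intermediate balls $B\subset B_1=\tfrac54B\subset B_2=\tfrac32B\subset 2B$.

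Apply Lemma \ref{lem:whitneyblending} twice, with $g=0$ (made legitimate by a cutoff argument), and split $f$ into two pieces. The first piece is $h_1\in\cF_p$ with $h_1=f$ on $B_2$ and $h_1=0$ outside $(1+\eta)B_2\subset 2B$. The second piece is $h_2$ with $h_2=0$ on $B_1'$ and $h_2=f$ outside $\tfrac32B$; it is obtained by blending $0$ on an inner ball $B_1'\supset B$ with $f$ outside. Each blending has energy on $4B$ bounded by
\[
\frac{\mu(B)}{\Psi(B)}\fint_{2B}|f|^pd\mu+\Gamma_p\langle f\rangle(4B),
\]
and $\PI_p(\Psi)$ together with Lemma \ref{lem:doublingballs} (using $f_B=0$) controls this by $\Gamma_p\langle f\rangle(\Lambda' B)$. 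On $2B$ we have $|f|\leq|h_1|+|h_2|$ pointwise quasi-everywhere, so we treat the two pieces separately.

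For $h_1$, use the layer cake formula $\int|h_1|^pd\Gamma_p\langle\phi_B\rangle=\int_0^\infty\Gamma_p\langle\phi_B\rangle(\{|h_1|>t\})\,d(t^p)$. At each level $t$, apply Lemma \ref{lem:LogCaccioppoli} with $u=\phi_B\in\mathcal H_+(2B)$, $\Omega=2B$ and the test function $h=\min(|h_1|/t,1)$ truncated Maz'ya-style, for instance $\min(\max(|h_1|-t,0)/t,1)$ on dyadic levels $t=2^k$. This gives $\Gamma_p\langle\phi_B\rangle(\{|h_1|\geq 2^{k+1}\})\leq 2^{-kp}\Gamma_p\langle h_1\rangle(\{2^k<|h_1|\leq 2^{k+1}\})$. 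Summing over $k$ yields $\int|h_1|^pd\Gamma_p\langle\phi_B\rangle\lesssim\Gamma_p\langle h_1\rangle(2B)$, as in \cite[Lemma 3.3, Theorem 3.5]{kinnunenkorte}. The $h_2$ part is symmetric: use $u=1-\phi_B\in\mathcal H_+(X\setminus\overline B)$ with $\Omega$ the complement of $B$, and note that $h_2$ vanishes near $B$. Here one has to localize to a large ball, since $h_2\in\cF_{p,\loc}$ only. It is therefore better to first multiply by a cutoff $\psi_{4B}$; since $h_2$ is replaced only outside $2B$, this does not affect $\Gamma_p\langle\phi_B\rangle$. The resulting error term $\int|f|^pd\Gamma_p\langle\psi_{4B}\rangle$ is again circular, and we avoid it by blending $f$ to $0$ outside $3B$ using Lemma \ref{lem:whitneyblending} once more.

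The main obstacle is this bookkeeping for $h_2$ and the truncation. Lemma \ref{lem:LogCaccioppoli} needs $h\leq0$ off $\Omega$ and $h\geq 1$ on the level set, and $u$ must have finite energy on $\Omega$. Making every blended function a genuine element of $\cF_p$ with energy controlled by $\Gamma_p\langle f\rangle(\Lambda B)$, with no term involving $\Gamma_p\langle\phi_B\rangle$ integrated against $f$, is precisely where Lemma \ref{lem:whitneyblending} replaces the circular product-rule argument.
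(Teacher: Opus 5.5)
Your proposal is correct and follows essentially the same route as the paper. The shared steps are the capacity-minimizing (harmonic) cutoff $\phi_B$, reduction to continuous $f$, normalization of the average, a Whitney-blending split of $f$ into a piece vanishing outside $2B$ and a piece vanishing near $B$, and Lemma \ref{lem:LogCaccioppoli} with Maz'ya truncation on dyadic levels, applied to $\phi_B$ on $2B$ and to $1-\phi_B$ off $B$.

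Your extra blending, which makes the outer piece vanish outside $3B$, is a sound and slightly more careful way to localize the right-hand side. The paper handles that point tersely; alternatively, $\max(1-\phi_B,g_\lambda)$ differs from $1-\phi_B$ only inside $2B$, so only $\Gamma_p\langle g_\lambda\rangle(2B)$ enters.
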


\begin{proof}
    Let $\psi_B$ be the harmonic functions on $2B\setminus B$, which equal $1$ on $B$ and $0$ outside $2B$. These are guaranteed to exist by Lemma \ref{lem:existenceofaminimizer}. By Lemma \ref{lem:reductiontocont} it suffices to assume that $f$ is continuous.  By subtracting $f_{2B}\psi_{2\Lambda B}$, where $\psi_{2\Lambda B}|_{2\Lambda B}=1$, we may assume that $f_{2B}=0$.

    Construct two functions using Lemma \ref{lem:whitneyblending}: $h_1$ s.t. $h_1|_B=0$ and $h_1|_{X\setminus 3/2 B} = f|_{X\setminus 3/2B}$, and $h_2$ s.t. $h_2|_{X\setminus 2B}=0$ and $h_2|_{3/2B}=f$.  We may assume that $h_i$ are quasicontinuous by adjusting representatives. 
    We have by Lemma \ref{lem:whitneyblending} combined with the Poincar\'e inequality:
    \[
    \Gamma_p\langle h_i\rangle(\Lambda B)\lesssim \Gamma_p\langle f\rangle(\Lambda B).
    \]

    Further,
    \[
    |f|^p\leq |h_1|^p + |h_2|^p
    \]
    for every $x$, and thus cutoff Sobolev is true for $f$ if we have the corresponding bound with $f$ replaced with $h_1$ and $h_2$.

    Consider first $i=1$, and take $g=h_1$ and $\Omega = 2B$. By Lemma \ref{lem:existenceofaminimizer} $u=\psi_B$ is superharmonic in $\Omega$. For every $\lambda>0$, let $g_\lambda = \frac{\max(\min(|g|,2\lambda)-\lambda,0)}{\lambda}$ a non-negative function in $\Omega$ with $g\geq 1$ on the set $A_{2\lambda}=\{|h|>2\lambda\}$ and $g=0$ on $X\setminus \Omega$, and we get from Lemma \ref{lem:LogCaccioppoli}, that
    \[
    \Gamma_p\langle \psi_B\rangle(A_{2\lambda}) \leq \Gamma_p\langle g_\lambda\rangle(2B)\leq \frac{1}{\lambda^p} \Gamma_p\langle |g|\rangle(A_\lambda \setminus A_{2\lambda}),
    \]
    where we used Proposition \ref{prop:properties} and Lipschitz contractivity.
    
    Now, summing over $i$, we get
    \begin{align}
    \int |h_1|^p d\Gamma_p\langle\psi_B\rangle &\leq \sum_{i=-\infty}^\infty 2^{(i+1)p} \Gamma_p\langle \psi_B\rangle(A_{2^i}) \nonumber \\
    &\leq \sum_{i=-\infty}^\infty \Gamma_p\langle |g|\rangle(A_{2^{i}} \setminus A_{2^{i+1}}) \leq 2\Gamma_p\langle |h_1|\rangle \lesssim \Gamma_p\langle f\rangle(\Lambda B). \label{eq:h1bound}
    \end{align}
    On the final line we also used Lipschitz contractivity.

    Next, let $i=2$ and take $u=1-\psi_B$, $\Omega=X\setminus B$ and $g=h_2$ in the argument. By Lemma \ref{lem:existenceofaminimizer} we have that $u$ is superharmonic in $X\setminus B$ and $\Gamma_p\langle 1-\psi_B\rangle(X\setminus B)=\Gamma_p\langle\psi_B\rangle(X\setminus B)<\infty$. Now,
    the same argument as above yields
    \[
    \int |h_2|^p d\Gamma_p\langle1-\psi_B\rangle \lesssim \Gamma_p\langle h_2\rangle(\Lambda B) \leq \Gamma_p\langle f\rangle(\Lambda B).
    \]
    Since $\Gamma_p\langle 1-\psi_B\rangle=\Gamma_p\langle\psi_B\rangle$ by Lemma \ref{lem:localenergy}, we get
    \[
    \int |h_2|^p d\Gamma_p\langle\psi_B\rangle \lesssim  \Gamma_p\langle f\rangle(\Lambda B).
    \]
    By summing this estimate with \eqref{eq:h1bound}, we obtain the desired claim.
\end{proof}

\subsection{Proof of Theorem \ref{thm:mainthmhke}}

We briefly explain how from Theorem \ref{thm:mainthm} one deduces Theorem \ref{thm:mainthmhke}.

\begin{proof}[Proof of Theorem \ref{thm:mainthmhke}]
    The equivalence of $(1),(2),(3)$ with volume doubling, ${\rm PI}(\beta)$ and the cutoff Sobolev inequality is stated in \cite[Theorem 4.5]{MN}. The proof there cites many earlier works, and we refer the reader to details there. The cutoff Sobolev inequality implies the upper capacity bound by applying the cutoff Sobolev inequality to the function $f=1$ (localized as in Remark \ref{rmk:local}). Therefore, the equivalent conditions $(1-3)$ imply $(4)$. Theorem \ref{thm:mainthm} shows that $(4)$ implies the cutoff Sobolev inequality, and thus also conditions $(1-3)$.
\end{proof}

\section{Whitney Blending}\label{sec:whitneyblending}

In this section, we prove Lemma \ref{lem:whitneyblending}. Our task is really that of extension: $h$ is defined in $B\cup X\setminus (1+\eta)B$, and we must extend $h$ in a way that does not increase the Sobolev norm. This is done by an adaptation of the Jones extension technique \cite{jones}. Jones's theorem applies only to uniform domains in Euclidean spaces, but in our case the relevant domains are not necessarily uniform and lie in a general metric measure space. To fix this, we include adaptations to Jones technique that come from \cite{tuominen,Koskela1998,muruganreflected}. Since the method of extension used here involves a Whitney-cover, and blending two functions together, we call this method \emph{Whitney blending}. It is worth noting that \emph{a posteriori} this method is slightly useless in practice. Indeed, once the cutoff-Sobolev inequality has been proven, Lemma \ref{lem:whitneyblending} has a short proof:

\begin{proof}[Proof of Lemma \ref{lem:whitneyblending} with $\eta=1$ using the cutoff Sobolev inequality] Let $\psi_B$ be the function in the cutoff-Sobolev inequality. Let $h=\psi_B f + (1-\psi_B) g$. By using $\phi_M(a,b,c)=\min(ab + (1-a)c,M)$ in Proposition \ref{prop:properties} together with a simple approximation argument with smooth functions (since $\phi_M$ is not continuously differentiable), we get
\[
\Gamma_p\langle \phi_M(\psi_B,f,g)\rangle \leq 1\vee 3^{\frac{p-2}{2}} \left(\Gamma_p\langle\psi_B\rangle |f-g| + \psi_B \Gamma_p\langle f\rangle + (1-\psi_B)\Gamma_p\langle g\rangle\right).
\]
Sending $M\to  \infty$ and $h=\lim_{M\to\infty} \psi_M$, and using the cutoff Sobolev inequality, we obtain that $h$ is the desired extension.
\end{proof}

However, since our goal is to prove Theorem \ref{thm:mainthm}, we must first find a different way to do the extension that is independent of the cutoff Sobolev inequality: via partitions of unities and an averaging technique. We remark that conceptually the method of Whintey blending appears quite useful, see e.g. \cite{murugannonlocal} for another application of this method to a family conjectures of the resistance type.

\subsection{Sobolev partitions of unity}
Let $\cB$ be a collection of balls. We call $\cB$ $C_N$-locally finite, if for every $B\in \cB$ there are at most $C_N$-many balls $B'\in \cB$ s.t. $B'\cap 2B\neq \emptyset$. For $C>0$ write $C\cB=\{CB: B\in \cB\}$ for the collection of inflated balls. If $\cB$ is a collection of balls, a $C_B$-Sobolev partition for $\cB$ is a collection of functions $\{\phi_B\in \cF_p : B\in \cB\}$ s.t. 
\begin{enumerate}
\item $\phi_B|_{X\setminus 2B}=0$
\item $\sum_{B\in \cB} \phi_B(x)=1$ for $x\in \bigcup_{B\in \cB} B$. 
\item 
\begin{equation}\label{eq:phibenergybound}
\Gamma_p\langle \phi_B\rangle \leq C_B\frac{\mu(B(x,r))}{\Psi(x,r)}
\end{equation}
\end{enumerate}

We construct a partition of unity subordinate to a good cover. This result and method is quite standard, for similar earlier results and methods see \cite[Lemma 6.27]{murugan2023first},  \cite[Lemma 2.5]{muruganlength}, \cite[p. 504]{barlow2006stability}, \cite[p. 235]{kanai} \cite[Lemma 4.5]{muruganreflected}.

\begin{lemma}\label{lem:Sobolevpartofunity} Assume $\cCap_p(\Psi)$.
    For every $C_N\in \N$ there exists a constant $C_B>0$ s.t. the following holds. If $\cB$ is a finite or countable collection of balls that is $C_N$-locally finite, then there is a $C_B=C_B(C_N)$-Sobolev partition for $\cB$.
\end{lemma}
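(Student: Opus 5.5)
The plan is the standard normalization construction. For each $B\in\cB$, apply $\cCap_p(\Psi)$ to $B$ itself. This gives $\psi_B\in C(X)\cap\cF_p$ with $\psi_B|_B=1$, $\psi_B|_{X\setminus 2B}=0$ and $\Gamma_p\langle\psi_B\rangle(X)\le C_{\rm cap}\mu(B)/\Psi(B)$. Replacing $\psi_B$ by $\min(\max(\psi_B,0),1)$, which is allowed by Lipschitz contractivity, we may assume $0\le\psi_B\le 1$.

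Set $S=\sum_{B'\in\cB}\psi_{B'}$. By $C_N$-local finiteness, on $2B$ only at most $C_N$ of the $\psi_{B'}$ are nonzero, so the sum is locally finite there. On $\bigcup_{B\in\cB}B$ we have $S\ge 1$. We cannot divide by $S$ where it vanishes, so we use a truncated denominator $\max(S,1)$ and define
\[
\phi_B=\frac{\psi_B}{\max(S,1)}.
\]
Then $\phi_B|_{X\setminus 2B}=0$, and $\sum_B\phi_B=S/\max(S,1)=1$ on $\bigcup B$. This gives properties (1) and (2) of a Sobolev partition.

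For the energy bound, work on $2B$, where $\phi_B=\Phi(\psi_B,\psi_{B_1},\dots,\psi_{B_k})$ with $k\le C_N$ and
\[
\Phi(a,b_1,\dots,b_k)=\frac{a}{\max(a+\sum b_j,1)},
\]
restricted to $[0,1]^{k+1}$. This $\Phi$ is Lipschitz with constant depending only on $C_N$, since the denominator is at least $1$ and the arguments are bounded. It is not $C^1$ because of the max. The paper's own device handles this: we either smooth the max (replace $\max(t,1)$ by a $C^1$ function $m(t)\ge 1/2$ agreeing with $\max(t,1)$ for $t\ge 1$ and nondecreasing) or approximate $\Phi$ by $C^1$ Lipschitz functions and pass to the limit with weak lower semicontinuity. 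After subtracting $\Phi(0)=0$, Proposition \ref{prop:properties}(viii) gives on $2B$
\[
\Gamma_p\langle\phi_B\rangle\lesssim_{C_N}\Gamma_p\langle\psi_B\rangle+\sum_{j}\Gamma_p\langle\psi_{B_j}\rangle .
\]
On $X\setminus 2B$ the function $\phi_B$ vanishes. Strong locality, together with quasicontinuity via Proposition \ref{prop:properties}(v), then gives $\Gamma_p\langle\phi_B\rangle(X\setminus\overline{2B})=0$. The boundary of $2B$ is handled by the same formula, taken on a slightly larger open set.

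The main obstacle is comparing $\mu(B_j)/\Psi(B_j)$ with $\mu(B)/\Psi(B)$ for neighbours $B_j$ with $2B_j\cap 2B\neq\emptyset$. Local finiteness alone does not force comparable radii. I would first check whether the intended application (Whitney covers) supplies comparability, and otherwise add this hypothesis implicitly. Alternatively, one can avoid the issue by only dividing in the region where it matters. The approach I would try first is to note that $\Gamma_p\langle\psi_{B_j}\rangle$ matters only on $2B\cap 2B_j$. If the radii are comparable, doubling and the regular scale function bounds give $\mu(B_j)/\Psi(B_j)\lesssim\mu(B)/\Psi(B)$, and summing over at most $C_N$ terms yields \eqref{eq:phibenergybound} with $C_B=C_B(C_N,C_{\rm cap},C_D,C_\Psi)$. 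Measurability of $\phi_B$ and membership in $\cF_p$ follow from the local finiteness: near any point, $\phi_B$ is a fixed Lipschitz function of finitely many elements of $\cF_p$.
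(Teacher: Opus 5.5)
Your normalization argument is correct, modulo the same tacit hypotheses the paper's proof uses (see the second paragraph), but it is a different construction from the paper's.

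The paper orders $\cB=\{B_n\}$ and truncates greedily, $\phi_{B_n}=\min\bigl(\psi_{B_n},1-\sum_{i<n}\phi_{B_i}\bigr)$. Membership in $\cF_p$ then only uses stability under $\min$. The energy bound comes from splitting $X$ into three pieces, and Proposition \ref{prop:properties}(v) is applied on each:
\begin{enumerate}
\item[(a)] the set $\{\phi_{B_n}=\psi_{B_n}\}$;
\item[(b)] the set $\{\sum_{i<n}\phi_{B_i}=1\}$, where $\phi_{B_n}=0$;
\item[(c)] the set $\{\sum_{i<n}\phi_{B_i}<1\}$, where every earlier $\phi_{B_i}$ equals $\psi_{B_i}$, so that $\phi_{B_n}=1-\sum_j\psi_{B_j}$ over the neighbours.
\end{enumerate}
That route needs no chain rule, but the partition depends on an ordering.

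Your $\psi_B/\max(S,1)$ is order-free, and each $\phi_B$ is an explicit function of the $\psi$'s of its immediate neighbours only. It gives the measure inequality $\Gamma_p\langle\phi_B\rangle\lesssim\Gamma_p\langle\psi_B\rangle+\sum_j\Gamma_p\langle\psi_{B_j}\rangle$ in one step. The cost is an approximation step to use Proposition \ref{prop:properties}(viii), since $\Phi$ is neither $C^1$ nor globally Lipschitz on $\mathbb{R}^{k+1}$, even after smoothing the max. To make it work, precompose $\Phi$ with the clamp to $[0,1]$ in each variable; this is harmless because all $\psi$'s take values in $[0,1]$. Then mollify, subtract the value at $0$, and pass to the limit by weak lower semicontinuity. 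The approximants agree with $\phi_B$ outside the bounded set $2B\cup\bigcup_j 2B_j$, so the convergence is in $L^p$. Also, $\phi_B=\Phi(\psi_B,\psi_{B_1},\dots,\psi_{B_k})$ holds on all of $X$, since both sides vanish where $\psi_B=0$. So your separate treatment of $X\setminus 2B$ and of $\partial(2B)$ is unnecessary.

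Your comparability concern is justified, and it applies equally to the paper's proof. In its estimate on $F_<$, the sum should read $\sum_{j\in J_n}\Gamma_p\langle\psi_{B_j}\rangle(X)^{1/p}$, not $\sum_{j\in J_n}\Gamma_p\langle\psi_{B_n}\rangle(X)^{1/p}$. Bounding it by a multiple of $(\mu(B_n)/\Psi(B_n))^{1/p}$ requires neighbouring balls to have comparable radii, together with doubling and the regularity of $\Psi$. So $C_B$ really depends on $C_{\rm cap}$, $C_D$ and $C_\Psi$ as well, not only on $C_N$.

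Likewise, both arguments need a bound on the number of $B'$ with $2B'\cap 2B\neq\emptyset$. The stated definition of $C_N$-local finiteness only counts $B'$ with $B'\cap 2B\neq\emptyset$.

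Both extra conditions hold in the two places the lemma is applied: the single-scale nets in Lemma \ref{lem:PIcharSob}, and Whitney covers (cf.\ Lemma \ref{lem:whitneyproperties}). So making them explicit hypotheses, as you propose, is the right repair.
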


\begin{proof}
    Let $\psi_B\in \cF_p$ be the cutoff function for $B\in \cB$ s.t. $\psi_B|_B=1$, $\psi_B|_{X\setminus 2B}=0$ and $\Gamma_p\langle \psi_{B}\rangle(X)\leq C_{\rm cap} \frac{\mu(B(x,r))}{\Psi(x,r)}$ implied by the upper capacity bound. By Proposition \ref{prop:properties}, we may assume that each $\phi_B$ is quasicontinuous.

    Order the elements in $\cB=\{B_n : n\in I\subset \N\}$, where $I=[0,N]\cap \N$ or $I=\N$ depending on if $\cB$ is finite or countable.

For $n\in I$
\[
\phi_{B_n} = \min\left(\psi_{B_n}, 1-\sum_{i=1}^{n-1} \phi_{B_i}\right).
\]

By \cite[Lemma 2.18]{erikssonmurugan} we have that $\phi_{B_n}\in \cF_p$ for every $n\in I$. It is also easy to see by induction that for all $n\in I$
\begin{equation}\label{eq:sumbound}
0\leq \sum_{i=1}^{n} \phi_{B_n}\leq 1,
\end{equation}
and
\begin{equation}\label{eq:phipsi}
\sum_{i=1}^{n} \phi_{B_n}(x)< 1 \Longrightarrow \phi_{B_k}(x)=\psi_{B_k}(x) \text{ for } k =1,\dots, n.
\end{equation}
It is also direct to see that $\phi_{B_n}$ are quasicontinuous for all $n\in I$.

Further, for $x\notin 2B_n$, we have $\psi_{B_n}(x)=0$, and thus by \eqref{eq:sumbound}, we have $\phi_{B_n}(x)=0$ as well. Further, of $x\in \bigcup_{i=1}^n B_i$, then $\sum_{i=1}^n \phi_{B_i}(x)=1$, since otherwise \eqref{eq:phipsi} would contradict $\psi_{B_i}(x)=1$ for $x\in B_i$ when $i=1,\dots, n$. These give the first and third properties of being a Sobolev partition of unity. 

Next, we give the energy bound. For every $n\in I$, let $E=\{x \in X : \phi_{B_n}(x)=\psi_{B_n}(x)\}$ and let $F=X\setminus E$. Let $F_< = \{x\in F: \sum_{i=1}^{n-1} \phi_{B_n}(x)< 1\}$ and $F_= =\{x\in F: \sum_{i=1}^{n-1} \phi_{B_n}(x)= 1\}$. These sets decompose the space, and thus:
\[
\Gamma_p\langle \phi_{B_n}\rangle(X) = \Gamma_p\langle \phi_{B_n}\rangle(E) + \Gamma_p\langle \phi_{B_n}\rangle(F_<) + \Gamma_p\langle \phi_{B_n}\rangle(F_=).
\]

For $x\in F_=$, we have $\phi_{B_n}(x)=0$, and thus by Proposition \ref{prop:properties} we have
\[
\Gamma_p\langle \phi_{B_n}\rangle(F_=) = 0.
\]
Next, for $x\in E$, we have $\phi_{B_n}(x)=\psi_{B_n}(x)$, and thus
\[
\Gamma_p\langle \phi_{B_n}\rangle(E) =\Gamma_p\langle \psi_{B_n}\rangle(E) \leq C_{\rm cap}\frac{\mu(B(x,r))}{\Psi(x,r)}.
\]
Finally, for $x\in F_<$, we must have $\psi_{B_n}(x)>0$ and $0\leq 1-\sum_{i=1}^{n-1} \phi_{B_i}< 1$.  But, then by \eqref{eq:phipsi}, we have
\[
\phi_{B_n}(x)=1-\sum_{i=1}^{n-1} \phi_{B_i}(x) = 1-\sum_{i=1}^{n-1} \psi_{B_i}(x).
\]
Let $J_n=\{j\in I : j < n \text{ and } B_j \cap B_n \neq \emptyset\} \subset I$. By local finiteness, we have $|J_n|\leq D$ and thus for $x\in F_<$
\[
\phi_{B_n}(x)=1-\sum_{j\in J_n} \psi_{B_j}(x),
\]
and by Proposition \ref{prop:properties} and Lemma \ref{lem:localenergy} and the triangle inequality again
\[
\Gamma_p\langle \phi_{B_n}\rangle(F_<)^\frac{1}{p}=\Gamma_p\langle 1-\sum_{j\in J_n} \psi_{B_j}\rangle(F_<)^\frac{1}{p}\leq \sum_{j\in J_n} \Gamma_p\langle \psi_{B_n}\rangle(X)^\frac{1}{p}\leq D C_{\rm cap}^\frac{1}{p} \frac{\mu(B(x,r))^{\frac{1}{p}}}{\Psi(x,r)^{\frac{1}{p}}}.  
\]
Combining these estimates, the energy bound directly follows.
\end{proof}

\subsection{Discrete convolutions}

 %    The proof goes verbatim through: One shows that there are discrete convolutions with energy bounded uniformly by $\nu(X)$ converging in $L^p(X)$ to $h$. Thus by lower semi-continuity and closedness $h\in \cF_p$. This also is an argument of Haj{\l}asz-Koskela.

 The domain $\cF_p$ has an abstract description, but once a Poincar\'e inequality and capacity bounds are assumed, it is easier to prove membership in $\cF_p$. Here, we characterize membership in $\cF_p$ in terms of a Poincar\'e type inequality. The characterization and its proof are adapted from \cite{Koskela1998}, see also \cite[Theorem 10.3.4]{HKST}. We note that this is quite closely related to Korevaar-Schoen-type characterizations used in a similar fashion in \cite{muruganreflected}. In fact, Korevaar-Schoen spaces were discussed by Koskela in \cite{Koskela1998}, and could be used here too. Many other works study the relationship between Korevaar-Schoen energies and the domain $\cF_p$, see e.g. \cite{KSTkorevaar,muruganreflected, murugan2023first,KumSturm,shimizu,ASEBShimizu, GHLKorevaar}.

For $\epsilon>0$ is positive, a set $N\subset X$ is called $\epsilon$-separated if for each $x,y\in N$ with $x\neq y$ we have $d(x,y)\geq \epsilon$. An $\epsilon$-net is a $\epsilon$-separated set $N$ that is maximal with respect to set-inclusion.

\begin{lemma}\label{lem:PIcharSob} Assume $\cCap_p(\Psi)$. Let $h\in L^p(X)$ have compact support s.t. there is a constant $\Lambda$ and a finite measure $\nu$ and a $\delta>0$ s.t. for all balls $B\subset X$ with $\rad(B) \leq \delta$ we have
\[
     \fint_B |h - h_B|^p d\mu \leq \frac{\Psi(x,r)}{\mu(B(x,r))} \nu(\Lambda B).
    \]
Then $h\in \cF_p$ with $\Gamma_p\langle h\rangle \lesssim \nu$.
\end{lemma}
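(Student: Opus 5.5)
The plan is to approximate $h$ by discrete convolutions built from a Sobolev partition of unity, bound their energies uniformly by $\nu$, and conclude with weak lower semicontinuity. This is the Haj{\l}asz--Koskela/Koskela argument, run with $\mu$-averages throughout.

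\textbf{Step 1: the approximants.} Fix a scale $\epsilon<\delta/(4\Lambda)$ and an $\epsilon$-net $N_\epsilon$. The balls $\cB_\epsilon=\{B(x,\epsilon):x\in N_\epsilon\}$ cover $X$. By doubling they are $C_N$-locally finite with $C_N=C_N(C_D)$. Lemma \ref{lem:Sobolevpartofunity} then gives a $C_B$-Sobolev partition $\{\phi_B\}_{B\in\cB_\epsilon}$ with
\[
\Gamma_p\langle\phi_B\rangle(X)\lesssim \frac{\mu(B)}{\Psi(B)}.
\]
I will take the representatives to be quasicontinuous. Set $h_\epsilon=\sum_{B\in\cB_\epsilon} h_B\phi_B$. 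Since $h$ has compact support, only finitely many terms are nonzero, so $h_\epsilon\in\cF_p$.

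\textbf{Step 2: $L^p$ convergence.} On a ball $B'\in\cB_\epsilon$ we have $h-h_\epsilon=\sum_{B}(h-h_B)\phi_B$, summed over the boundedly many $B$ with $2B\cap B'\neq\emptyset$. Using Lemma \ref{lem:doublingballs}, $|h_\epsilon-h_{B'}|$ is controlled by averages $\fint_{4B'}|h-h_{4B'}|\,d\mu$. Hence $\|h-h_\epsilon\|_{L^p}$ is bounded by the $L^p$-distance from $h$ to its local averages at scale $\epsilon$. This tends to $0$ for any $h\in L^p$ by Lebesgue differentiation or a density argument with continuous functions, since doubling gives the differentiation theorem.

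\textbf{Step 3: uniform energy bound.} This is the key step. On $2B'$, write
\[
h_\epsilon = h_{B'} + \sum_B (h_B-h_{B'})\phi_B,
\]
where the sum runs over the at most $C_N$ balls $B$ with $2B\cap 2B'\ne\emptyset$. The constant $h_{B'}$ contributes no energy by Lemma \ref{lem:localenergy}, locality, and $\sum\phi_B=1$. The triangle inequality for $\Gamma_p$ then gives
\[
\Gamma_p\langle h_\epsilon\rangle(2B')\lesssim \sum_B |h_B-h_{B'}|^p\frac{\mu(B)}{\Psi(B)}.
\]
By Lemma \ref{lem:doublingballs} with $L=8$ and the hypothesis applied to $8B'$ (radius below $\delta$), this is
\[
\lesssim \frac{\mu(B')}{\Psi(B')}\fint_{8B'}|h-h_{8B'}|^p\,d\mu \lesssim \nu(8\Lambda B').
\]
Comparability $\Psi(B)\sim\Psi(B')$ and $\mu(B)\sim\mu(B')$ for nearby equal-radius balls follows from the regular scale function bounds and doubling. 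Summing over $B'$ and using the bounded overlap of $\{8\Lambda B'\}$, which holds by doubling, gives
\[
\Gamma_p\langle h_\epsilon\rangle(U)\lesssim \nu(U_{C\epsilon})
\]
for open $U$, where $U_{C\epsilon}$ is a $C\epsilon$-neighbourhood of $U$. In particular $\cE_p(h_\epsilon)\lesssim\nu(X)$ uniformly in $\epsilon$.

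\textbf{Step 4: conclusion.} Weak lower semicontinuity, applied along $\epsilon\to0$, gives $h\in\cF_p$. For open $U$ it also gives
\[
\Gamma_p\langle h\rangle(U)\le\liminf_{\epsilon\to0}\Gamma_p\langle h_\epsilon\rangle(U)\lesssim \inf_{\epsilon>0}\nu(U_{C\epsilon})=\nu(\overline U).
\]
Outer regularity of both finite Borel measures on the proper space $X$ then upgrades this to $\Gamma_p\langle h\rangle\lesssim\nu$ on all Borel sets. I would apply it to closed sets via open neighbourhoods.

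I expect Step 3 to be the main obstacle. It requires a careful local bookkeeping that keeps every average taken with respect to $\mu$, which is essential since $\nu$ may be singular. It also requires the triangle inequality on $2B'$ rather than on all of $X$.
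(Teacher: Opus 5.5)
Your proposal is correct and follows essentially the same route as the paper. Both build discrete convolutions $h_\epsilon=\sum_B h_B\phi_B$ from a net-based Sobolev partition of unity, bound $\Gamma_p\langle h_\epsilon\rangle$ locally by subtracting the constant $h_{B'}$ and comparing neighbouring $\mu$-averages through the hypothesis, then conclude with bounded overlap and weak lower semicontinuity. The differences are cosmetic: the paper gets $h_k\to h$ in $L^p$ from the hypothesis together with $\Psi(x,r)\to 0$ rather than from Lebesgue differentiation or density, and it passes to the limit on closed sets $A$ with neighbourhoods $A_k$ rather than on open sets and their closures.
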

\begin{proof} By scaling we may assume $\delta\geq 4$. Let $N_k$ be a $2^{-k}$-net in $X$ for $k\in \N$. For each $n\in N_k$, and let $\cB_k = \{B(n,2^{-k}):n \in N_k\}.$ Let $\phi_B$ be the Sobolev partition of unity given by Lemma \ref{lem:Sobolevpartofunity}. Define
\[
h_k = \sum_{B\in \cB_k} h_B \phi_B .
\]
We will show that $h_k\in \cF_p$ with the desired energy bound. Since the support of $h$ is compact, this sum always has finitely many non-zero terms. Similarly, all the sums below will be finite in this same sense.

%First, Let $\cB_{n,k} \subset \cB_k$, for $n\in\N$ be a sequence of increasing finite sets s.t. $\bigcup_{n\in \N} \cB_{n,k}=\cB_k$, and let
%\[
%h_{n,k} = \sum_{B\in \cB_{n,k}} h_B \phi_B.
%\]
As a finite sum of functions in $\cF_p$, we have $h_{k}\in \cF_p$ for all $k \in \N$. Further, 
\begin{align*}
\int |h_{k}-h|^p d\mu  &=\int \left|\sum_{B\in \cB_k} (h_B-h(x)) \phi_B\right|^p d\mu  \\
&\lesssim \int \sum_{B\in \cB_k} |h_B-h(x)|^p |\phi_B| d\mu  & \text{$\cB_k$ is locally finite}\\
&\lesssim \sum_{B\in \cB_k} \Psi(B)\nu(2\Lambda B)  & \text{assumption}.
\end{align*}
Since $\Psi$ is a regular scale function, we have $\lim_{r\to 0}\Psi(x,r)\to 0$ locally uniformly on all bounded sets, and thus $\int |h_{k}-h|^p d\mu\to_{k\to \infty} 0$ and $h_k$ converges to $h$ in $L^p(X).$

Finally, we estimate the energies of $h_{k}$. Let $A\subset X$ be a closed set and let $A_k = \{y \in X: d(y,A)\leq 2^{2-k}\Lambda\}$. First, if $B\cap B' \neq \emptyset$ for $B,B'\in \cB_k$, then $B'\subset 4B$ and the doubling property and the Poincar\'e inequality imply that
\begin{equation}\label{eq:neighborbound}
|h_B-h_{B'}|\leq \frac{1}{|B|}\int_{4B} |h-h_B| d\mu \lesssim \frac{\Psi(x,r)}{\mu(B(x,r))} \nu(\Lambda B).
\end{equation}
By applying these, we give a uniform bound for the energy of $h_{k}$:
\begin{align*}
\Gamma_p\langle h_{k}\rangle(A)  &\leq \sum_{B\in \cB_{k}} \Gamma_p\langle h_{k}\rangle(B\cap A) & \hspace{-1cm}\text{balls cover the set}\\
&\leq \sum_{B\in \cB_{k}} \Gamma_p\langle \sum_{\substack{B' \cap B \neq \emptyset \\ B'\in \cB_{k} }} h_{B'} \psi_{B'}\rangle(B\cap A)  & \hspace{-1cm}\text{locality}\\
&\leq \sum_{B\in \cB_{k}} \Gamma_p\langle \sum_{\substack{B' \cap B \neq \emptyset \\ B'\in \cB_{k} }} (h_{B'}-h_B) \psi_{B'}\rangle(B\cap A)  & \hspace{-1cm}\text{partition of unity}\\
&\lesssim \sum_{\substack{B\in \cB_{k} \\ B\cap A \neq \emptyset}} \sum_{\substack{B' \cap B \neq \emptyset \\ B'\in \cB_{k} }}\Gamma_p\langle  (h_{B'}-h_B) \psi_{B'}\rangle(B)  & \hspace{-1cm}\text{locally finiteness}\\
&\lesssim \sum_{\substack{B\in \cB_{k} \\B\cap A \neq \emptyset}} \sum_{\substack{B' \cap B \neq \emptyset \\ B'\in \cB_{k} }}|h_{B'}-h_B|^p\frac{\mu(B')}{\Psi(B')}  & \\
&&\hspace{-13em} \text{ homogeneity and Sobolev partition of unity}\\
&\lesssim \sum_{\substack{B\in \cB_{n,k} \\ B\cap A \neq \emptyset}} \sum_{\substack{B' \cap B \neq \emptyset \\ B'\in \cB_{k} }}|h_{B'}-h_B|^p\frac{\mu(B(x,r))}{\Psi(x,r)}  & \hspace{-1cm}\text{doubling}\\
&\lesssim \sum_{\substack{B\in \cB_{k} \\ B\cap A \neq \emptyset}} \nu(\Lambda B)  & \hspace{-1cm}\text{\eqref{eq:neighborbound}},\\
&\lesssim \nu(A_k).  & 
\end{align*}
In the final step we used the fact that metric doubling implies that the balls $\Lambda\cB_{n,k}$ have finite overlap.

By using $A=X$, we get $\Gamma_p\langle h_{k}\rangle(X)\lesssim \nu(X)$. Sending $k\to \infty$ and using lower semi-continuity, we get $h\in \cF_p$ with $\Gamma_p\langle h\rangle(X)\lesssim \nu(X)$. For general closed sets $A$, we get $\Gamma_p\langle h\rangle(A)\lesssim \nu(A_k)$ for every $k\in \N$, and then sending $k\to \infty$ yields $\Gamma_p\langle h\rangle(A)\lesssim \nu(A)$. By Borel regularity, we get the claim.

\end{proof}

\subsection{Whitney cover}

If $\Omega$ is an open set s.t. $X\setminus \Omega\neq \emptyset$, then a $(C_D,\Lambda)$-Whitney cover for $\Omega$ is a countable collection $\cB=\{B(x_i,r_i) : I\}$ for which
\begin{enumerate}
    \item $\Lambda^3 r_i/2\leq d(x_i, X\setminus \Omega) \leq \Lambda^3 r_i$, 
    \item $\Lambda^2\cB$ is $C_D$-bounded,
    \item $\Omega = \bigcup_{B\in \cB} B$.
\end{enumerate}

\begin{lemma}\label{lem:whitneycover} Assume that $X$ is $C_M$-metric doubling. For every $\Lambda \geq 8$ there exists a $C_D=C_D(C_M,\Lambda )$, such that for every $\Omega\subsetneq X$ there exists a $(C_D,\Lambda )$-Whitney cover $\cB$. 
\end{lemma}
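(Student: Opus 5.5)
The plan is the standard construction of a Whitney cover from a maximal separated set on each ``dyadic layer'' of $\Omega$, followed by a packing argument for bounded overlap. Here I read ``$\Lambda^2\cB$ is $C_D$-bounded'' as $C_D$-local finiteness of $\Lambda^2\cB$: each point, or equivalently each doubled ball $2\Lambda^2 B$, meets at most $C_D$ of the balls $\Lambda^2 B'$.

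For $x\in\Omega$ write $\delta(x)=d(x,X\setminus\Omega)>0$. This is finite because $X\setminus\Omega\neq\emptyset$, and positive because $\Omega$ is open. For $k\in\Z$ consider the layer
\[
\Omega_k=\{x\in\Omega: \Lambda^3 2^{k-1}< \delta(x)\leq \Lambda^3 2^{k}\}.
\]
These layers partition $\Omega$. In each nonempty $\Omega_k$ choose a maximal $2^{k-1}$-separated set $N_k\subset\Omega_k$; this exists by Zorn's lemma. Metric doubling and separability make $N_k$ countable, since it has finitely many points in each bounded set. Set
\[
\cB=\{B(x,r_k): x\in N_k,\ k\in\Z\}, \qquad r_k=2^k .
\]
For $x\in N_k$ we have $\Lambda^3 r_k/2<\delta(x)\leq \Lambda^3 r_k$, which is property (1).

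\textbf{Covering.} Let $y\in\Omega$ and let $k$ be such that $y\in\Omega_k$. By maximality there is $x\in N_k$ with $d(x,y)<2^{k-1}<r_k$, so $y\in B(x,r_k)$. Conversely, each ball lies inside $\Omega$: for $x\in N_k$ we have $r_k<2\delta(x)/\Lambda^3<\delta(x)$, so $B(x,r_k)\subset\Omega$. Hence $\Omega=\bigcup_{B\in\cB}B$, which is property (3).

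\textbf{Bounded overlap.} This is the main, though routine, step. Suppose $2\Lambda^2B(x,r_k)$ meets $\Lambda^2B(x',r_{k'})$ with $x\in N_k$ and $x'\in N_{k'}$.
\begin{itemize}
\item[(a)] \emph{Comparable radii.} Since $\delta$ is $1$-Lipschitz,
\[
\Lambda^3 r_k/2<\delta(x)\leq \delta(x')+d(x,x')\leq \Lambda^3 r_{k'}+2\Lambda^2 r_k+\Lambda^2 r_{k'} .
\]
Because $\Lambda\geq 8$, the term $2\Lambda^2 r_k\leq \Lambda^3 r_k/4$ can be absorbed, giving $r_k\lesssim r_{k'}$. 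The same argument with the roles swapped gives $r_{k'}\lesssim r_k$. So $|k-k'|\leq m$ for some $m=m(\Lambda)$; a crude computation gives $2^{|k-k'|}\leq 8$.
\item[(b)] \emph{Counting within one layer.} For each admissible $k'$, the centers $x'$ lie in $B(x,3\Lambda^2 r_{k'}+3\Lambda^2 r_k)\subset B(x,C\Lambda^2 r_{k'})$ and are $r_{k'}/2$-separated. Iterating metric doubling, at most $C_M^{\lceil\log_2(C\Lambda^2)\rceil+2}$ such points fit in this ball.
\end{itemize}
Summing the bound in (b) over the at most $2m+1$ admissible layers from (a) gives
\[
C_D=(2m+1)\,C_M^{\lceil\log_2(C\Lambda^2)\rceil+2},
\]
which depends only on $C_M$ and $\Lambda$. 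This proves property (2).

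The only delicate point is keeping the constants in (a) consistent: the assumption $\Lambda\geq 8$ is exactly what lets the $\Lambda^2$-enlargements be absorbed into the $\Lambda^3$ Whitney scale, so that balls which meet after inflation have comparable radii.
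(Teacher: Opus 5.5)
Your proof is correct and follows essentially the same route as the paper: you split $\Omega$ into dyadic layers according to $d(\cdot,X\setminus\Omega)$ and take a maximal separated set in each layer, with radii comparable to $\Lambda^{-3}$ times the distance to the complement; bounded overlap then comes from the comparability of radii of meeting inflated balls together with a metric-doubling packing count. Your write-up is more complete than the paper's sketch, since you check explicitly that each ball lies in $\Omega$, make the layers a genuine partition, and track the constants in the overlap estimate.
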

\begin{proof}
    Let $i\in \Z$ and $\Omega_i = \{x\in \Omega : 2^{i-1} \leq d(x,X\setminus \Omega) \leq 2^{i}\}$. Let $N_i\subset \Omega_i$ be a $2^{i-3}\Lambda^{-3}$-net for $\Omega_i$. Define $\cB = \{ B(n, 2^{i} \Lambda^{-3}) : n\in N_i, i \in \mathbb{Z}\}$.

    For each $B(x,r)=B(n, 2^i \Lambda^{-3})\in \cB$ we have $r = 2^i \Lambda^{-3} \leq 2d(n,X\setminus \Omega)\Lambda^{-3}\leq 2 r$. This gives the first property of being a Whitney cover.

    Second, by metric doubling, the balls $\{B(n, 2^{i} \Lambda^{-1}) : n\in N_i\}$ have at most $D_0$-overlap for each $i$, with $D_0$ depending on $C_M$ and $\Lambda $, and $B(n,2^{i} \Lambda^{-1})\subset \Omega_i \cup\Omega_{i-1} \cup \Omega_{i+1}$. Thus, $B(n,2^{i} \Lambda^{-3})\cap B(m, 2^{j} \Lambda^{-3}) \neq \emptyset$ for $n\in N_i, m\in N_j$ implies $|i-j|\leq 2$. These together imply $C_D$-boundedness with a $C_D$ dependent on $D_0$.
\end{proof}

In the course of the proof of Lemma \ref{lem:whitneyblending}, we will need a small technical property of Whitney covers. In order to streamline the proof, we state it here.
\begin{lemma}\label{lem:whitneyproperties} Assume $\Lambda \geq 8$. If $B,B'\in \cB$ and $d(B,B')\leq 2\rad(B')$, then $B \subset 16 B' \subset 2\Lambda B'$ and $B' \subset 16 B \subset 2\Lambda B$, and $\rad(B)\leq 3 \rad(B')$ and $\rad(B')\leq 3 \rad(B)$ 
\end{lemma}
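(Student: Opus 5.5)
The plan is a direct computation from the first defining property of a Whitney cover, $\Lambda^3 r_i/2\leq d(x_i,X\setminus\Omega)\leq \Lambda^3 r_i$, combined with the fact that $x\mapsto d(x,X\setminus\Omega)$ is $1$-Lipschitz. Write $B=B(x,r)$ and $B'=B(x',r')$.

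\textbf{Step 1 (distance between centers).} Since $d(B,B')$ is an infimum over points $a\in B$, $b\in B'$, and $d(x,a)<r$, $d(b,x')<r'$, the triangle inequality gives
\[
d(x,x')\leq r+r'+d(B,B')\leq r+3r'.
\]

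\textbf{Step 2 (comparability of radii).} By the Lipschitz property,
\[
\tfrac{\Lambda^3}{2} r\leq d(x,X\setminus\Omega)\leq d(x',X\setminus\Omega)+d(x,x')\leq \Lambda^3 r'+r+3r'.
\]
Hence $r(\Lambda^3/2-1)\leq r'(\Lambda^3+3)$. Since $\Lambda\geq 8$, we have $\Lambda^3/2-1\geq 255$, so
\[
r\leq \frac{\Lambda^3+3}{\Lambda^3/2-1}\,r'=\Bigl(2+\frac{5}{\Lambda^3/2-1}\Bigr)r'\leq 3r'.
\]
Symmetrically, $\frac{\Lambda^3}{2}r'\leq \Lambda^3 r+r+3r'$, which gives $r'(\Lambda^3/2-3)\leq r(\Lambda^3+1)$. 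Writing the quotient as $2+\frac{7}{\Lambda^3/2-3}$ and using $\Lambda^3/2-3\geq 253$, this gives $r'\leq 3r$. The only care needed is that the hypothesis is asymmetric ($2\rad(B')$ rather than $2\rad(B)$). This is why the error terms $r+3r'$ appear in both directions, but since $\Lambda^3$ dominates these constants, the ratio stays below $3$.

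\textbf{Step 3 (inclusions).} For $y\in B$, using Steps 1 and 2,
\[
d(y,x')\leq d(y,x)+d(x,x')<2r+3r'\leq 9r'<16r',
\]
so $B\subset 16B'$. For $y\in B'$,
\[
d(y,x)<r'+r+3r'\leq 13r<16r,
\]
so $B'\subset 16B$. Finally, $16\leq 2\Lambda$ because $\Lambda\geq 8$, which gives $16B'\subset 2\Lambda B'$ and $16B\subset 2\Lambda B$.

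There is no real obstacle; the argument is elementary once Step 1 is in place.
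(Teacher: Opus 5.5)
Your proof is correct and is essentially the paper's argument: bound $d(x_B,x_{B'})\le \rad(B)+3\rad(B')$, insert this into the two-sided Whitney bound $\Lambda^3 r/2\le d(x,X\setminus\Omega)\le\Lambda^3 r$ via the triangle inequality for $d(\cdot,X\setminus\Omega)$, absorb the lower-order terms using $\Lambda\ge 8$ to get the factor $3$ in each direction, and read off the inclusions from the center-distance bound. Your bookkeeping of the constants is cleaner than the printed version, whose intermediate fractions contain small typos.
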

\begin{proof}
    Estimate 
    \begin{align*}
        \rad(B)&\leq 2\Lambda^{-3}d(x_B,X\setminus \Omega) \\
        &\leq 2\Lambda^{-3}(d(x_{B'},X\setminus \Omega)+3\rad(B')+\rad(B)) \\
         &\leq 2\Lambda^{-3}(\Lambda^3\rad(B')+3\rad(B')+\rad(B)).
    \end{align*}
    Then $\rad(B)\leq (2+3/\Lambda^3)(1-2\Lambda^{3})^{-1} \leq 3\rad(B')$.    Thus, $B\subset 9B'$ and
    \begin{align*}
        \rad(B')&\leq 2\Lambda^{-3}d(x_{B'},X\setminus \Omega) \\
        &\leq 2\Lambda^{-3}(d(x_{B},X\setminus \Omega)+2\rad(B')+\rad(B)) \\
         &\leq 2\Lambda^{-3}(\Lambda^3\rad(B)+2\rad(B)) + 2\Lambda^{-3} \rad(B').
    \end{align*}
    Moving the term $\rad(B')$ to the left hand side we obtain $\rad(B')\leq (1-2\Lambda^{-3})^{-1}(2+4\Lambda^{-3})\rad(B) \leq 3\rad(B)$. Also, $B' \subset B(x_B, \rad(B)+4\rad(B'))\subset 16 B$.
\end{proof}

\subsection{Blending procedure}

We first state a slightly more genral version of Lemma \ref{lem:whitneyblending}.

\begin{lemma}\label{lem:whitneyblending-stronger} Assume that $\cCap_p(\Psi)$ and $\PI_p(\Psi)$ hold. For every $\eta\in (0,1)$ there exists a $C_{\rm WB}=C_{\rm WB}(C_{\rm cap}, C_{D}, C_{\PI}, C_{\Psi}, \beta_\pm)>0$ s.t. the following holds. Let $f,g\in \cF_p$ and $B_0$ is a ball in $X$, then, there exists a $h\in \cF_p$ such that $h|_{B_0}=f|_{B_0}$, $h|_{X\setminus (1+\eta)B_0} = g|_{X\setminus (1+\eta)B_0}$ with
\[
\Gamma_p\langle h\rangle(2B_0)\leq C_{\rm WB}\frac{\mu(B_0)}{\Psi(B_0)} \fint_{(1+\eta)B_0}|f-g|^pd\mu + C_{\rm WB} \Gamma_p\langle f\rangle(2B_0)+C_{\rm WB}\Gamma_p\langle g\rangle(2B_0)
\]
\end{lemma}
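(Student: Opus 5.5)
The plan is to reduce to blending $f-g$ with zero and then perform a Jones/Whitney-type extension with averages. Set $u=f-g$; it suffices to construct $v\in\cF_p$ with $v|_{B_0}=u$, $v|_{X\setminus(1+\eta)B_0}=0$ and
\[
\Gamma_p\langle v\rangle(2B_0)\lesssim \frac{\mu(B_0)}{\Psi(B_0)}\fint_{(1+\eta)B_0}|u|^pd\mu+\Gamma_p\langle u\rangle(2B_0).
\]
Then $h=g+v$ works by the triangle inequality, since $\Gamma_p\langle u\rangle\lesssim\Gamma_p\langle f\rangle+\Gamma_p\langle g\rangle$. Let $\Omega=(1+\eta)B_0\setminus\overline{B_0}$ be the open region where $v$ must be invented. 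Take a Whitney cover $\cB$ of $\Omega$ (Lemma \ref{lem:whitneycover}) and a Sobolev partition of unity $\{\phi_B\}$ subordinate to it (Lemma \ref{lem:Sobolevpartofunity}). For each $B\in\cB$ compare $\mathrm{dist}(x_B,\overline{B_0})$ with $\mathrm{dist}(x_B,X\setminus(1+\eta)B_0)$. If $B$ is closer to $\overline{B_0}$ and $\rad(B)\le c\eta\rad(B_0)$, assign the value $a_B=u_{B^*}$, where $B^*$ is a ball of comparable radius centered near $B_0$ (reflected toward $B_0$), still contained in $(1+\eta)B_0$ and within $\lesssim\rad(B)$ of $B$. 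Otherwise assign $a_B=0$. Define $v=u$ on $\overline{B_0}$, $v=\sum_{B}a_B\phi_B$ on $\Omega$, and $v=0$ outside $(1+\eta)B_0$.

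Membership $v\in\cF_p$ and the energy bound will come from Lemma \ref{lem:PIcharSob}, after first cutting off by $\psi_{2B_0}$ to get compact support. This requires a local Poincar\'e-type inequality for $v$ with a measure
\[
\nu=C\Gamma_p\langle u\rangle|_{2\Lambda' B_0}+C\frac{\mu(B_0)}{\Psi(B_0)}\Big(\fint_{(1+\eta)B_0}|u|^p\Big)\frac{\mu|_{(1+\eta)B_0}}{\mu(B_0)}+\text{(Whitney-ball terms)}.
\]
(Strictly, the energy of $u$ outside $2B_0$ enters through $\Lambda$-inflations. I would either shrink the construction to stay inside $2B_0/\Lambda$-neighbourhoods, or accept $\Gamma_p\langle u\rangle(\Lambda' B_0)$ and then note that in the application the Poincaré inequality absorbs it. The statement's $2B_0$ may need $\eta$-dependent scaling.) On a single Whitney ball, the energy of $v$ is estimated exactly as in the proof of Lemma \ref{lem:PIcharSob}: $\Gamma_p\langle v\rangle(B)\lesssim\sum_{B'\sim B}|a_B-a_{B'}|^p\mu(B)/\Psi(B)$. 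Neighbouring values differ by averages of $u$ over nearby comparable balls, which $\PI_p(\Psi)$ and Lemma \ref{lem:doublingballs} bound by $\frac{\Psi(B)}{\mu(B)}\Gamma_p\langle u\rangle(C\Lambda B^*)$. The inflated reflected balls $C\Lambda B^*$ have bounded overlap, which follows from Lemma \ref{lem:whitneyproperties} and doubling. The transition between $a_B=u_{B^*}$ and $a_B=0$ happens only at scale $\sim\eta\rad(B_0)$, and there $|u_{B^*}|^p\mu(B)/\Psi(B)\lesssim_\eta \frac{\mu(B_0)}{\Psi(B_0)}\fint_{(1+\eta)B_0}|u|^p$, using doubling and the regular scale function bounds; the number of such balls is bounded.

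The main obstacle is the Poincar\'e inequality for $v$ on small balls $B$ that straddle $\partial B_0$ or $\partial(1+\eta)B_0$, where $v$ switches from $u$ to the Whitney sum. For a ball $B$ centered at distance $\le\rad(B)$ from $\partial B_0$, I would write
\[
\fint_B|v-u_{B}|^p\le \fint_{B}|u-u_{B}|^p+\fint_{B\cap\Omega}|v-u_{B}|^p.
\]
The first term is controlled by $\PI_p(\Psi)$. For the second, each Whitney ball $B'$ meeting $B$ has $\rad(B')\lesssim\rad(B)$ and $B'^*\subset C B$. The difference $|u_{B'^*}-u_B|$ is then bounded by chaining through dyadic balls from $B'^*$ up to scale $\rad(B)$, which gives a sum $\sum_j(\Psi(2^jB')/\mu(2^jB'))^{1/p}\Gamma_p\langle u\rangle(C\Lambda 2^jB')^{1/p}$. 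This sum is controlled using the scale bounds $\Psi(B')\le C(\rad B'/\rad B)^{\beta_-}\Psi(B)$, which give geometric decay and hence a bound by $\frac{\Psi(B)}{\mu(B)}\Gamma_p\langle u\rangle(C'B)$. This is where $\beta_\pm$ enter the constant. Without uniformity of the domain the reflected balls might fail to exist, but here $\Omega$ is an annulus between concentric balls, and the points of $B_0$ near $x_B$ supply $B^*$ (possibly $B^*$ is just a ball centered at a nearest point of $\overline{B_0}$, which lies partly outside $B_0$, harmless since $u$ is defined everywhere). Near the outer boundary $v=0$ on both sides, and the same estimate applies with averages $0$. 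Finally, quasicontinuous representatives agree with $u$ on $B_0$ and $0$ outside, giving $h|_{B_0}=f$ and $h|_{X\setminus(1+\eta)B_0}=g$.
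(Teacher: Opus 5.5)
\textbf{There is a gap in the step you yourself flag as the main obstacle.} Your architecture is the paper's: reduce to $g=0$, take a Whitney cover of $\Omega=(1+\eta)B_0\setminus\overline{B_0}$ with a Sobolev partition of unity, assign averages near $B_0$ and zeros near the outer sphere with the switch at scale $\eta\rad(B_0)$, and conclude with Lemma \ref{lem:PIcharSob}.

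\emph{The chaining estimate is false as stated.} The dyadic sum $\sum_j(\Psi(2^jB')/\mu(2^jB'))^{1/p}\Gamma_p\langle u\rangle(C\Lambda 2^jB')^{1/p}$ involves the ratio $\Psi/\mu$. The bound $\Psi(B')\lesssim(\rad B'/\rad B)^{\beta_-}\Psi(B)$ gives no decay for this ratio, because $\mu$ can shrink faster than $\Psi$. Take $\R^n$ with $n>p$ and $\Psi(x,r)=r^p$, and let $u$ be a bump equal to $1$ on $B'^*$ (of radius $s$) and supported in $2B'^*$. Then $|u_{B'^*}-u_B|\approx 1$, while $\frac{\Psi(B)}{\mu(B)}\Gamma_p\langle u\rangle(C'B)\approx (s/\rad B)^{n-p}\to 0$. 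So the pointwise bound fails; only the $\mu(2B')$-weighted sum over the infinitely many $B'$ meeting $B$ can be controlled, and your argument does not organize that.

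\emph{The reflection also costs you the overlap bounds you invoke.} Lemma \ref{lem:whitneyproperties} concerns Whitney balls, not reflected ones. Without a geodesic or corkscrew hypothesis you fall back on balls centred at nearest points of $\overline{B_0}$. For that choice, a point near $\partial B_0$ can lie in $C\Lambda B'^*$ for Whitney balls $B'$ of unboundedly many dyadic scales. Hence $\sum_{B'}\Gamma_p\langle u\rangle(C\Lambda B'^*)$ is not bounded by the energy of $u$ on a fixed ball.

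\textbf{The missing observation is that no reflection is needed.} Since $u=f-g$ is defined on all of $X$, take $a_{B'}=u_{B'}$, the average over the Whitney ball itself. This is the paper's choice $c_B=f_B$ for $B\in\cB_0$, i.e.\ $d(x_B,B_0)\le\frac{\eta}{2}\rad(B_0)$. Then:
\begin{enumerate}
\item $\{2\Lambda^2B'\}$ has bounded overlap by the Whitney property.
\item Neighbouring differences are bounded, via Lemma \ref{lem:doublingballs} and $\PI_p(\Psi)$, by $\frac{\Psi(B')}{\mu(B')}\Gamma_p\langle u\rangle(2\Lambda^2B')$.
\item Straddling balls need no chaining. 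If a small test ball $B$ meets some $B_*\in\cB_0$ with $\rad(B_*)\le\rad(B)$, then every Whitney ball meeting $B$ lies in $LB$ with $L=3\Lambda^3+10$. Jensen and bounded overlap give $\frac{1}{\mu(B)}\sum_{B'}\mu(2B')|u_{B'}-u_B|^p\lesssim\frac{1}{\mu(B)}\int_{LB}|u-u_B|^p\,d\mu$. Lemma \ref{lem:doublingballs} and one application of $\PI_p(\Psi)$ on $LB$ then finish. This is the paper's Subcase B.1; $\beta_\pm$ enter only through $\Psi(LB)\lesssim\Psi(B)$.
\item For test balls smaller than the Whitney balls they meet, apply $\PI_p(\Psi)$ to the finite sum $F=\sum c_{B'}\psi_{B'}$. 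So the unspecified Whitney-ball terms in your $\nu$ must be the possibly singular measures $\frac{\Psi(B')}{\mu(B')}\Gamma_p\langle u\rangle(2\Lambda^2B')\Gamma_p\langle\psi_{B'}\rangle$. For the boundedly many Whitney balls of radius $\gtrsim\eta\rad(B_0)$, add $\fint_{(1+\eta)B_0}|u|^p\,d\mu\;\Gamma_p\langle\psi_{B'}\rangle$. A multiple of $\mu$ cannot do this job, because $\Gamma_p\langle\psi_{B'}\rangle$ may be singular with respect to $\mu$.
\end{enumerate}

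Two hedges in your proposal are unnecessary. Only balls of $\cB_0$ carry nonzero coefficients, and the test-ball scale in Lemma \ref{lem:PIcharSob} can be a small multiple of $\rad(B_0)$. So all inflated balls used stay inside $2B_0$, and no $\eta$-dependent enlargement of $2B_0$ is needed. The cutoff by $\psi_{2B_0}$ is also unnecessary, since $v$ already vanishes off $(1+\eta)B_0$.
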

The proof of Lemma \ref{lem:whitneyblending} follows by setting $\Psi(x,r)=r^\beta$, and thus it suffices to prove this stronger form. Our argument will be based on using averages of $f$ and $g$ combined through a partition of unity. This slightly complicated function $h$ is then shown to lie in $L^p(X)$ by a direct calculation. Finally, one shows $h\in \cF_p$ with the desired energy bound. This requires us to apply Lemma \ref{lem:PIcharSob}, and to find an appropriate measure $\nu$ to bound the oscillations $\fint_B |h-h_B|^p d\mu$. This step is the most technical, and involves a number of cases depending on the size and positioning of $B$. We note that a very similar analysis was previously implemented in \cite{muruganreflected}, where a Korevaar-Schoen-type characterization of $f\in \cF_p$ was used. It seems likely that the proof here could be also completed by a Korevaar-Schoen-type argument to a show that $h\in \cF_p$, but we choose to follow the related argument here that applies Lemma \ref{lem:PIcharSob}. 

\begin{proof}[Proof of Lemma \ref{lem:whitneyblending}]
\noindent{\textbf{Preparations:}} We start the proof with a little setup. The proof will be slightly simplified by setting $g=0$. We can assume this without loss of generality once we replace $f$ with $\tilde{f}=f-g$ and $g$ with $\tilde{g}=0$. The special case of the statement yields a function $\tilde{h}$ with the desired energy bound and the claim follows from the triangle inequality by choosing $h=\tilde{h}+g$. Thus, from now on, assume $g=0$. The claim is vacuous if $X\setminus (1+\eta)B_0=\emptyset$ (since then we can choose $h=f$). We may thus assume $X\setminus (1+\eta)B_0\neq \emptyset$. Let $\Omega = (1+\eta)B_0\setminus \overline{B_0}$, and let $\cB$ be the $(C_D,\Lambda )$-Whitney cover given in Lemma \ref{lem:whitneycover} for $\Omega$. 

\vskip1em

\noindent{\textbf{Definition of $h$:}} Let $\phi_B$ be the Sobolev partition of unity for $\cB$ given by Lemma \ref{lem:Sobolevpartofunity}. For each ball $B\in \cB$ define $c_{B}=f_B$, if $B=B(x,r)$ satisfies $d(x,B_0)\leq \eta/2 \rad(B_0)$. Let $\cB_0$ be the collection of these balls, and let $\cB_1=\cB \setminus \cB_0$ and let $c_{B}=0$ for $B\in \cB_1$. 
    
    Define:
    \[
    h(x)=\begin{cases}f(x) & x\in \overline{B_0} \\ 0 & x\not\in X\setminus (1+\eta)B_0 \\ \sum_{B\in \cB} \psi_{B} c_{B} & x\in \Omega\end{cases}.
    \]

\vskip1em

\noindent{\textbf{Proof of the $L^p(X)$ bound:}} Compute:
    \begin{equation}\label{eq:lpbound}
    \int_X |h|^p d\mu \leq \int_{B_0} |f|^p d\mu + \int_\Omega| \sum_{B\in \cB}\psi_{B} c_{B}|^p d\mu.
    \end{equation}
    Next,
    \begin{align*}
        \int_\Omega| \sum_{B\in \cB}\psi_{B} c_{B}|^p d\mu &\lesssim \sum_{B\in \cB} |c_{B}|^p \mu(2B) & \text{ bounded overlap of $2\cB$}\\
        &\leq \sum_{B\in \cB} |f_B|^p\mu(2B) \\
        &\lesssim \sum_{B\in \cB} |f_B|^p \mu(B)& \text{ doubling} \\
        &\lesssim \int_\Omega |f|^p d\mu. & \text{ bounded overlap of $2\cB$}
    \end{align*}
By combining this with \eqref{eq:lpbound}, we get $h\in L^p(X)$ and
\begin{equation}\label{eq:lpboundfinal}
  \int_X |h|^p d\mu \lesssim \int_{(1+\eta)B_0} |f|^p d\mu  
\end{equation} 

\noindent{\textbf{Proof that $h\in \cF_p$ and a bound for $\Gamma_p\langle h\rangle$:}} This step will be based on Lemma \ref{lem:PIcharSob}, and uses an auxiliary measure $\nu$ defined as follows
\begin{align*}
    \nu:= &\sum_{B\in \cB} \frac{\mu(B)}{\Psi(B)} \Gamma_p\langle f\rangle(2\Lambda^2B) \Gamma_p\langle\psi_{B}\rangle + \Gamma_p\langle f\rangle \\
    &\quad \quad +    \fint_{(1+\eta)B_0}|f|^p d\mu \left(\sum_{\substack{B\in \cB \\ \rad(B)\geq \eta/(12\Lambda^3) }} \Gamma_p\langle \psi_B\rangle\right).
\end{align*}

    The balls $2\Lambda^2\cB$ have bounded overlap by Lemma \ref{lem:whitneycover}. This with doubling implies that there are at most finitely many balls $B\in \cB$ with $\rad(B)\geq \eta/(12\Lambda^3)$, and their number is bounded by the doubling constant and $\Lambda $. Thus, the fact that $\psi_B$ is Sobolev partition of unity implies that $\nu$ is a finite measure. 
    
    Choose $C_\delta>0$ so small that  $(12+6\Lambda^3)C_\delta\leq \eta/2$ holds. The claim then follows from Lemma \ref{lem:PIcharSob} and the properties of a Sobolev partition of unity, once one establishes that there exist constants $C_\nu>0$ and $\mathbf{\Lambda }\geq 1$ s.t all $B\subset X$ with   $\rad(B)\leq C_\delta \rad(B_0)$  satisfy
    \[
     \fint_B |h - h_B|^p d\mu \leq C_{\nu}\frac{\Psi(B)}{\mu(B)} \nu(\mathbf{\Lambda}B).
    \]
    It will be convenient to prove instead that there exists a constant $m_B$ s.t.
\begin{equation}\label{eq:hpinu}
     \fint_B |h - m_B|^p d\mu \leq C_{\nu}\frac{\Psi(B)}{\mu(B)} \nu(\mathbf{\Lambda}B).
    \end{equation}
    This implies the main claim, since
    \[
     \fint_B |h - h_B|^p d\mu \leq 2^p \fint_B |h-m_B|^p + |m_B-h_B|^p d\mu \leq 2^{p+1} \fint_B |h-m_B|^p d\mu.
    \]

    There are two cases to consider in proving \eqref{eq:hpinu}.  The main cases are the following.
    \begin{enumerate}
        \item[\textbf{A)}] $B\subset B_0$ or $B\subset X\setminus (1+\eta)B_0$ 
        \item[\textbf{B)}] $B\cap \Omega \neq \emptyset$
    \end{enumerate}
    The final case has a further subdivisions, that we will execute later.

    \vskip1em
    
    \noindent \textbf{Case \textbf{A)}}:
     If $B\subset B_0$, then $h|_B=f|_B$ and $m_B=f_B$, and the claim follows from the Poincare\'e inequality for $f$. If on the other hand $B\subset X\setminus (1+\eta)B_0$:, then $h|_B=0$ and $m_B=0$, and the claim follows since the left hand side in \eqref{eq:hpinu} vanishes.

     \vskip1em
    
%    \noindent \textbf{Case \textbf{C)}:}
%     First, consider the case $\rad(B)>C_\delta \rad(B_0)$. In this case, set $m_B=0$, and note
%    \begin{align*}
%    \fint_B |h|^p d\mu &= \frac{1}{\mu(B)} \int_{(1+\eta)B_0} |h|^p d\mu & \text{$h=0$ outside } (1+\eta)B_0 \\
%    &\lesssim \frac{1}{\mu(B_0)} \int_{(1+\eta)B_0} |f|^p d\mu & \text{doubling and \eqref{eq:lpboundfinal}} \\
%    &\lesssim \frac{\Psi(B_0)}{\mu(B_0)} \nu(2B_0) & \text{ definition of $\nu$} \\
%    &\lesssim \frac{\Psi(B)}{\mu(B_0)} \nu(\mathbf{\Lambda}B),
%    \end{align*}
%    once $\mathbf{\Lambda} \geq 3(1+\eta)C_\delta^{-1}$. Thus, \eqref{eq:hpinu} holds.

\vskip1em
    
    \noindent \textbf{Case \textbf{B)}:}
    In this case  $B\cap \Omega \neq \emptyset$ and $\rad(B)\leq C_\delta \rad(B_0)$ (which holds by earlier restrictions). If $B\cap (\bigcup_{B' \in \cB_0} B')=\emptyset$, then \eqref{eq:hpinu} holds by setting $m_B=0$ since $h|_B=0$. Thus, we are left with the cases where there exists a $B_*\in \cB_0$ s.t. $B_* \cap B \neq \emptyset$. Fix such a ball $B_*$ for the following argument. There may be many such balls, but any choice suffices. It will affect the division to cases in the next paragraph.

    Depending on the size of the chosen $B_*$ we subdivide into two subcases: \textbf{B.1)} $\rad(B_*)\leq \rad(B)$ and \textbf{B.2)} $\rad(B_*)>\rad(B)$. 
    %{\color{blue}MM: The subcases are not clear. Does case (C.1) mean `$\rad(B_*)\leq \rad(B)$ for all  $B_*\in \cB_0$ such that $B_* \cap B \neq \emptyset$' or does it mean `there exists $B_*\in \cB_0$ such that $\rad(B_*)\leq \rad(B)$ and  $B_* \cap B \neq \emptyset$'? I think you want the former   for the argument below to apply. Similar comment also applies to case (C.2).}

\vskip1em
    
    \noindent \textbf{ Subcase \textbf{B.1)} $\rad(B_*)\leq \rad(B)$:} In this case
    \begin{align}
    d(x_B, B_0)+\rad(B)&\leq d(x_{B_*}, B_0) + 3\rad(B) \nonumber \\
    &\leq \Lambda^3 \rad(B_*) + 3\rad(B) \leq (3+\Lambda^3)C_\delta \rad(B_0). \label{eq:distxbbound}
    \end{align}
    Here, we used $B_*\in \cB_0$ to conclude $d(x_{B_*}, B_0)=d(x_{B_*}, X\setminus \Omega)$ and the definition of the Whitney cover. 

    Similarly
    \begin{align}
    d(x_B, B_0)&\leq d(x_{B_*}, B_0) + 3\rad(B) \nonumber \\
    &\leq \Lambda^3 \rad(B_*) + 3\rad(B) \leq (3+\Lambda^3)\rad(B). \label{eq:distxbbound2}
    \end{align}
    Recall that $C_\delta$ is chosen so small that $(12+6\Lambda^3)C_\delta \leq \eta/2$. From \eqref{eq:distxbbound} we get $B \subset (1+\eta)B_0$.  If $B'\in \cB$ is such that $B' \cap B \neq \emptyset$, then 
    \[
    \rad(B') \leq 2d(x_{B'}, B_0)/\Lambda  \leq 2\frac{\rad(B')+\rad(B)+d(x_B, B_0)}{\Lambda }.
    \]
    Here we used the definition of a Whitney cover and that $d(x_{B'},X\setminus \Omega)\leq d(x_{B'}, B_0)$, which follows from $B_0\subset X\setminus \Omega$.
    This and \eqref{eq:distxbbound} gives
    \begin{equation}\label{eq:bpradboundfirst}
    \rad(B') \leq 2\frac{d(x_B, B_0)+\rad(B)}{\Lambda -2}\leq 2\frac{\Lambda^3 \rad(B)+\rad(B)}{\Lambda -2}\frac{6+3\Lambda^3}{\Lambda -2}C_\delta \rad(B_0), 
    \end{equation}
    and combing this with \eqref{eq:distxbbound2} gives
    \[
    d(x_{B'}, B_0)\leq d(x_B, B_0)+\rad(B)+\rad(B') \leq (12+6\Lambda^3)C_\delta \rad(B_0) \leq \eta/2\rad(B_0).
    \]
    By definition, $B'\in \cB_0$. Therefore, all the balls $B'\in \cB$ that intersect $B$ satisfy $B'\in \cB_0$. Moreover, \eqref{eq:bpradboundfirst} and \eqref{eq:distxbbound2} yield
    \begin{equation}\label{eq:radbpbound}
    \rad(B')\leq \frac{(\Lambda^3+4)\rad(B)}{\Lambda -2}.
    \end{equation}
    Thus, $B'\subset LB$ with $L=3\Lambda^3+10$. 
    
    Set $m_B= f_{B}$. Since $B \subset (1+\eta)B_0$ we have
    \begin{align}
        \fint_B |h-h_B| d\mu &\leq  \frac{1}{\mu(B)} \int_{B_0\cap B} |f-f_B| d\mu + \frac{1}{\mu(B)} \int_{\Omega\cap B} |h-f_B| d\mu. \label{eq:hbound}
        \end{align}
        For the first term, the Poincar\'e inequality gives
        \[
        \frac{1}{\mu(B)} \int_{B_0\cap B} |f-f_B|^p d\mu \lesssim \frac{\Psi(B)}{\mu(B)}\Gamma_p\langle f\rangle(\Lambda B) \lesssim \frac{\Psi(B)}{\mu(B)} \nu(\Lambda B),
        \]
        and thus it suffices to bound the second term in  \eqref{eq:hbound}.
        \begin{align*}
        \frac{1}{\mu(B)} &\int_{\Omega\cap B} |h-f_B|^p d\mu \lesssim  \frac{1}{\mu(B)} \int_{\Omega\cap B} \sum_{B' \in \cB_0}|f_{B'}-f_B|^p \psi_{B'} d\mu & \hspace{-5em}\text{bounded overlap} \\
        &\lesssim  \frac{1}{\mu(B)} \sum_{\substack{B' \in \cB_0 \\ B'\cap B\neq \emptyset}}|f_{B'}-f_B|^p \mu(2B') & \hspace{-2em}\text{ support of $\psi_{B'}$ in $2B'$} \\
        &\lesssim  \frac{1}{\mu(B)} \sum_{\substack{B' \in \cB_0 \\ B'\cap B\neq \emptyset}}\int_{B'}|f(x)-f_B|^pd\mu & \text{Jensen} \\
        &\lesssim  \frac{1}{\mu(B)} \int_{LB}|f(x)-f_B|^pd\mu & \hspace{-20em}\text{$B'\subset LB$ and bounded overlap} \\
        &\lesssim  \frac{1}{\mu(LB)} \int_{LB}|f(x)-f_{LB}|^pd\mu & \text{Lemma \ref{lem:doublingballs} and doubling}\\
        &\lesssim   \frac{\Psi(LB)}{\mu(LB)} \Gamma_p\langle f\rangle(\Lambda LB) & \text{Poincar\'e}\\
        \end{align*}
        Inequality \eqref{eq:hpinu} now follows by doubling, the regularity of $\Psi$ and by taking $\mathbf{\Lambda}$ large enough.
\vskip1em

        \noindent \textbf{Subcase \textbf{B.2)} $\rad(B_*)>\rad(B)$:} This is the final case. Lemma \ref{lem:whitneyproperties} implies  that for all $B'\in \cB$ s.t. $B'\cap B \neq \emptyset$
        \begin{equation}\label{eq:radii}
        1/3 \rad(B_*)\leq \rad(B')\leq 3\rad(B_*),
        \end{equation}
        \begin{equation}\label{eq:inclusion}
        B' \subset 2\Lambda  B_* \quad \quad \text{ and } \quad \quad B_* \subset 2\Lambda  B'.
        \end{equation}
        Also $B\subset 3B_*$ and thus $B\subset \Omega$. Thus 
        \[
        h|_B=F|_B,
        \]
        with $F_B=\sum_{\substack{B'\in \cB_0 \\ B'\cap B\neq \emptyset}} c_{B'} \psi_{B'}$.
        The function $F$ is a finite sum of functions in $\cF_p$, where the number of terms is bounded by $C_D$ by local boundedness and \eqref{eq:inclusion}. Thus
        \begin{align}
        \fint_B |h-h_{B}| d\mu &\lesssim \frac{\Psi(B)}{\mu(B)} \Gamma_p\langle F\rangle(\Lambda B).\label{eq:finalbound}
        \end{align}
        This yields \eqref{eq:hpinu} once we bound the energy of $F$ by $\nu$. On bounding the energy of $F$ we will still need to consider two subcases.
        
        \textbf{First, assume that all balls $B'\in \cB$ s.t. $B'\cap B\neq \emptyset$ satisfy $B'\in \cB_0$.}  By local boundeness and the triangle inequality Remark \ref{rmk:local} and Lemma \ref{lem:localenergy}, we get 
        \begin{align}
        \Gamma_p\langle F\rangle = \Gamma_p\langle F-f_{B_*}\rangle \lesssim \sum_{\substack{B'\in \cB_0 \\ B'\cap B\neq \emptyset}}|c_{B'}-f_{B_*}|^p \Gamma_{p}\langle \psi_{B'}\rangle. \label{eq:energybound}
        \end{align}

        Lemma \ref{lem:doublingballs} together with estimates \eqref{eq:inclusion} and \eqref{eq:radii} yield
        \[
        |c_{B'}-f_{B_*}|^p=|f_{B'}-f_{B_*}|^p \lesssim \frac{\Psi(B')}{\mu(B')} \Gamma_p\langle f\rangle(2\Lambda^2 B'). 
        \]
        Combined with \eqref{eq:energybound}, we get
    \begin{align*}
        \Gamma_p\langle F\rangle \lesssim \sum_{B'\in \cB} \frac{\Psi(B')}{\mu(B')} \Gamma_p\langle f\rangle(2\Lambda^2 B') \Gamma_{p}\langle \psi_{B'}\rangle \leq \nu,
    \end{align*}
    and the claim follows from \eqref{eq:finalbound}.

    If the previous assumption doesn't hold, then we can \textbf{assume that there is some $B_{**}\in \cB_1$ s.t. $B_{**}\cap B\neq \emptyset$}. We have
    \[
    d(x_{B_{**}}, B_0)> \eta/2 \rad(B_0),
    \]
    and
    \[
    d(x_{B_*}, B_0)\leq \eta/2 \rad(B_0).
    \]
    Then, \eqref{eq:radii} implies that
    \[
    d(x_{B_{**}}, x_{B_*}) \leq \rad(B_{**}) + \rad(B_{*}) + 2\rad(B) \leq 6\rad(B_*).
    \]
    Thus
    \[
    d(x_{B_*}, B_0) \geq d(x_{B_{**}}, B_0)-6\rad(B_*) \geq(\eta/2-6C_\delta) \rad(B_0)\geq \eta/4 \rad(B_0),
    \]
    once $6C_\delta\leq \eta/4$. Therefore, 
    \[
    \rad(B_*)\geq \eta/(\Lambda^3 4) \rad(B_0).
    \]
    By \eqref{eq:radii} we also have
    \begin{equation}\label{eq:radlower}
    \rad(B')\geq \eta/(12\Lambda^3) \rad(B_0),
    \end{equation}
    for all $B'\in \cB$ with $B' \cap B\neq \emptyset$. Then 
    
    \begin{align*}
        \Gamma_p\langle F\rangle &\lesssim \sum_{\substack{B'\in \cB_0 \\ B'\cap B\neq \emptyset}}|f_{B'}|^p \Gamma_{p}\langle \psi_{B'}\rangle  \\
        &\lesssim \sum_{\substack{B'\in \cB_0 \\ B'\cap B\neq \emptyset}}\fint_{B'}|f|^p d\mu \Gamma_{p}\langle \psi_{B'}\rangle  \\
        &\lesssim \sum_{\substack{B'\in \cB_0 \\ \rad(B')\geq  \eta/(\Lambda^3 12) \rad(B_0)}}\fint_{(1+\eta)B_0}|f|^p d\mu \Gamma_{p}\langle \psi_{B'}\rangle  & \eqref{eq:radlower} \text{ and doubling }   \\
        &\lesssim \nu & \text{ definition of $\nu$ }.
        \end{align*}
    Again, this together with the bound in \eqref{eq:finalbound} implies the claim.
\end{proof}

\bibliographystyle{acm}
\bibliography{clp}

\end{document}